\documentclass[reqno,10pt,centertags,draft]{amsart}
\usepackage{amsmath,amsthm,amscd,amssymb,latexsym,upref}
\date{\today}

\setlength{\textwidth}{6in}%
\setlength{\oddsidemargin}{.2in}%
\setlength{\evensidemargin}{.2in}%


\newcommand{\bbN}{{\mathbb{N}}}
\newcommand{\bbR}{{\mathbb{R}}}

\newcommand{\bbC}{{\mathbb{C}}}

\newcommand{\cB}{{\mathcal B}}

\newcommand{\cH}{{\mathcal H}}
\newcommand{\cI}{{\mathcal I}}

\newcommand{\cM}{{\mathcal M}}

\newcommand{\cR}{{\mathcal R}}

\newcommand{\gR}{\mathfrak{R}}
\newcommand{\gt}{\mathfrak{t}}


\newcommand{\no}{\notag}
\newcommand{\lb}{\label}
\newcommand{\f}{\frac}

\newcommand{\ol}{\overline}

\newcommand{\wti}{\widetilde}

\newcommand{\ran}{\text{\rm{ran}}}

\newcommand{\dom}{\text{\rm{dom}}}

\newcommand{\bi}{\bibitem}

\newcommand{\beq}{\begin{equation}}
\newcommand{\eeq}{\end{equation}}
\newcommand{\ba}{\begin{align}}
\newcommand{\ea}{\end{align}}




\renewcommand{\Re}{\text{\rm Re}}
\renewcommand{\Im}{\text{\rm Im}}

\renewcommand{\ge}{\geqslant}
\renewcommand{\le}{\leqslant}

\DeclareMathOperator{\sgn}{sgn}

\DeclareMathOperator*{\slim}{s-lim}

\DeclareMathOperator{\diag}{diag}

\newcommand{\la}{\lambda}

\allowdisplaybreaks \numberwithin{equation}{section}

\newtheorem{theorem}{Theorem}[section]
\newtheorem{lemma}[theorem]{Lemma}
\newtheorem{corollary}[theorem]{Corollary}

\newtheorem{example}[theorem]{Example}
\theoremstyle{definition}
\newtheorem{definition}[theorem]{Definition}
\newtheorem{remark}[theorem]{Remark}

\begin{document}

\title[Generalized Polar Decompositions of Closed Operators in Hilbert Spaces]{Generalized Polar Decompositions for Closed Operators in Hilbert Spaces and Some Applications}
\author[F.\ Gesztesy, M.\ Malamud, M.\ Mitrea, and S.\ Naboko]{Fritz Gesztesy, Mark Malamud, Marius Mitrea, and Serguei Naboko}
\address{Department of Mathematics,
University of Missouri, Columbia, MO 65211, USA}
\email{fritz@math.missouri.edu}
\urladdr{http://www.math.missouri.edu/personnel/faculty/gesztesyf.html}
\address{Mathematics, Institute of Applied Mathematics and Mechanics, R. Luxemburg
str. 74, Donetsk 83114, Ukraine}
\email{mmm@telenet.dn.ua}
\address{Department of Mathematics, University of
Missouri, Columbia, MO 65211, USA}
\email{marius@math.missouri.edu}
\urladdr{http://www.math.missouri.edu/personnel/faculty/mitream.html}
\address{Department of Mathematical Physics, Institute of Physics, St. Petersburg State University,
1 Ulia- novskaia, St. Petergoff, St. Petersburg, 198504, Russia, and Department of Mathematics, University of Alabama at Birmingham, Birmingham, AL 35294-1170, USA}
\email{naboko@snoopy.phys.spbu.ru}
\thanks{Based upon work partially supported by the US National Science
Foundation under Grant Nos.\ DMS-0400639 and FRG-0456306, and
the Austrian Science Fund (FWF) under Grant No.\ Y330.}
\date{\today}
\subjclass[2000]{Primary: 47A05, 47A07 ; Secondary: 47A55.}
\keywords{Polar decomposition, relatively bounded and relatively form bounded perturbations, relatively compact and relatively form compact perturbations.}

\begin{abstract}
We study generalized polar decompositions of densely defined, closed linear operators in Hilbert spaces and provide some applications to relatively (form) bounded and relatively (form) compact perturbations of self-adjoint, normal, and $m$-sectorial operators.
\end{abstract}

\maketitle

\section{Introduction} \lb{s1}

This paper had its origin in attempts of proving that certain operators of the type
\begin{equation}
 \ol{(A +  I_{\cH})^{-1/2} B (A +  I_{\cH})^{-1/2}},   \lb{1.1}
\end{equation}
in a complex, separable Hilbert space $\cH$ (where $\ol S$ denotes the
closure of the operator $S$ and $I_\cH$ is the identity operator in $\cH$), are bounded, respectively, compact,
where $A\ge 0$ is self-adjoint in $\cH$, and $B$ is a densely defined, closed operator
in $\cH$. To prove such a result, it became desirable to replace the standard polar decomposition of $B$
(cf.\ \cite[Sect.\,IV.3]{EE89}, \cite[Sect.\,VI.2.7]{Ka80}),
\begin{equation}
B=U|B|=|B^*|U \, \text{ on } \, \dom(B) = \dom(|B|),   \lb{1.2}
\end{equation}
by some modified polar decomposition of the type
\begin{equation}
B = |B^*|^{1/2} U |B|^{1/2}  \, \text{ on } \, \dom(B) = \dom(|B|),     \lb{1.3}
\end{equation}
and then reduce boundedness, respectively, compactness of the operator \eqref{1.1}
to that of
\begin{equation}
|B|^{1/2} (A +  I_{\cH})^{-1/2} \, \text{ and } \, |B^*|^{1/2} (A +  I_{\cH})^{-1/2}.   \lb{1.4}
\end{equation}
With \eqref{1.3} in mind, it is natural to try to establish that, in fact, the following version of \eqref{1.3} holds
\begin{equation}
B = |B^*|^\alpha U |B|^{1-\alpha}  \, \text{ on } \, \dom(B) = \dom(|B|)    \lb{1.5}
\end{equation}
for all $\alpha \in [0,1]$.  In fact, after this was accomplished, it became clear that the following rather general polar-type decomposition can be established
\begin{equation}
B = \phi(|B^*|) U \psi(|B|)  \, \text{ on } \, \dom(B) = \dom(|B|),   \lb{1.6}
\end{equation}
where $\phi$ and $\psi$ are Borel functions on $\bbR$ with the property that
$\phi(\lambda) \psi(\lambda) = \lambda$, $\lambda \in \bbR$, and such that
$\dom(|B|) \subseteq \dom(\psi(|B|))$.

Finally, an even more general version of \eqref{1.6} is to show that an operator $T$ introduced as
\begin{equation}
T = VA_1 = A_2 V \, \text{ on } \,  \dom(T)=\dom(A_1),   \lb{1.7}
\end{equation}
also has the representation
\begin{equation}
T = \phi(A_2) V \psi(A_1)  \, \text{ on } \, \dom(T) = \dom(A_1)   \lb{1.8}
\end{equation}
for any pair of self-adjoint (in fact, also normal) operators $A_j$, $j=1,2$, and any bounded operator $V$
satisfying $V \dom(A_1) \subseteq \dom(A_2)$, assuming also $\dom(A_1) \subseteq \dom(\psi(A_1))$
(cf.\ Theorems \ref{t2.1} and \ref{t2.3} for details).

In Section \ref{s2} we provide proofs of \eqref{1.6} and \eqref{1.8}, and in Section \ref{s3} we discuss some applications to relatively (form) bounded and relatively (form) compact perturbations of self-adjoint operators. In the final Section \ref{s4} we discuss some applications to $m$-sectorial operators.

\section{Generalized Polar Decompositions} \lb{s2}

To set the stage, let $\cH_j$, $j=1,2$, be two separable complex Hilbert spaces with scalar
products and norms denoted by $(\cdot,\cdot)_{\cH_j}$ and $\|\cdot \|_{\cH_j}$, $j=1,2$, respectively. The identity operators in $\cH_j$ are written as $I_{\cH_j}$, $j=1,2$.
We denote by $\cB(\cH_1,\cH_2)$ (resp., $\cB_{\infty}(\cH_1,\cH_2)$) the Banach space of linear bounded (resp., compact) operators from $\cH_1$ into $\cH_2$. If $\cH_1=\cH_2=\cH$, these spaces are denoted by $\cB(\cH)$ (resp., $\cB_{\infty}(\cH)$). The domain, range, kernel (null space), resolvent
set, and spectrum  of a linear operator will be denoted by $\dom(\cdot)$, $\ran(\cdot)$, $\ker(\cdot)$,
$\rho(\cdot)$, and $\sigma(\cdot)$, respectively. Finally, we let $\ol S$ stand for the closure of an operator $S$.

We assume that
\begin{equation}
\text{$A_j$ are self-adjoint operators in $\cH_j$ with domains $\dom(A_j)$, $j=1,2$,}    \lb{2.1}
\end{equation}
and that
\begin{equation}
V \in \cB(\cH_1, \cH_2)       \lb{2.2}
\end{equation}
satisfies
\begin{equation}
V \dom(A_1) \subseteq \dom(A_2).   \lb{2.3}
\end{equation}
In addition, suppose that
\begin{equation}
VA_1 = A_2 V \, \text{ on } \, \dom(A_1).   \lb{2.4}
\end{equation}

Next,  given a self-adjoint operator $A$ in a complex separable Hilbert space $\cH$, we  denote by
$\{E_A(\lambda)\}_{\la\in\bbR}$ the family of spectral projections associated with $A$, and we introduce the function $\rho_f $ by
\begin{equation}
\rho_f \colon \begin{cases} \bbR \to [0,\infty), \\
\hspace*{.6mm} \la \mapsto \|E_A(\la) f\|_{\cH}^2, \end{cases}   f\in\cH.   \lb{2.4a}
\end{equation}
Clearly, $\rho_f$ is bounded, non-decreasing, right-continuous, and
\begin{equation}
\lim_{\la\downarrow -\infty} \rho_f(\la)=0, \quad \lim_{\la\uparrow \infty} \rho_f(\la)=\|f\|_{\cH}^2,
\quad f\in\cH.     \lb{2.4b}
\end{equation}
Hence, $\rho_f$ generates a measure, denoted by $d \rho_f$, in a canonical manner.

A function $\phi \colon\bbR\to\bbC$ is then called
{\it $d E_A$-measurable} if it is $d \rho_f$-measurable for all $f\in\cH$. Standard examples of $d E_A$-measurable functions are all continuous functions, all step functions, all pointwise limits of step functions, and all Borel measurable functions. Given a $d E_A$-measurable function $\phi$, the operator $\phi(A)$ is then defined in terms of the spectral representation of $A$ as usual by
\begin{equation}
 \phi(A) = \int_{\bbR} d E_A(\lambda) \, \phi(\lambda), \quad
 \dom(\phi(A)) = \bigg\{f\in\cH \,\bigg|\, \int_{\bbR} d\|E_A(\lambda) f\|_{\cH}^2 \, |\phi(\lambda)|^2 < \infty
\bigg\}.
\end{equation}

Our first result result then reads as follows:

\begin{theorem}   \lb{t2.1}
Suppose $A_j$, $j=1,2$, and $V$ satisfy \eqref{2.1}--\eqref{2.4}, and consider the operator $T$ given by
\begin{equation}
T = VA_1 = A_2 V \, \text{ on } \, \dom(T)=\dom(A_1).   \lb{2.5}
\end{equation}
$(i)$ If $\psi$ is both a $d E_{A_1}$- and $d E_{A_2}$-measurable function on $\bbR$, then
\begin{equation}
V \dom(\psi(A_1)) \subseteq \dom(\psi(A_2))    \lb{2.6}
\end{equation}
and
\begin{equation}
V \psi(A_1) = \psi(A_2) V  \, \text{ on } \, \dom(\psi(A_1)).   \lb{2.7}
\end{equation}
$(ii)$ Assume that $\phi$ and $\psi$ are simultaneously $d E_{A_1}$-
and $d E_{A_2}$-measurable functions on $\bbR$ such that
\begin{equation}
\phi(\lambda) \psi(\lambda) = \lambda, \quad \lambda \in \bbR,    \lb{2.8}
\end{equation}
and
\begin{equation}
\dom(A_1) \subseteq \dom(\psi(A_1)).    \lb{2.9}
\end{equation}
Then
\begin{equation}
T = \phi(A_2) V \psi(A_1)  \, \text{ on } \, \dom(T) = \dom(A_1).   \lb{2.10}
\end{equation}
\end{theorem}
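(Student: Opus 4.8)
The plan is to establish part $(i)$ first via the spectral theorem applied to the pair $A_1, A_2$, exploiting the intertwining relation \eqref{2.4}, and then to deduce part $(ii)$ by a domain-chasing argument. For part $(i)$, the key observation is that the hypothesis $VA_1 = A_2 V$ on $\dom(A_1)$, together with boundedness of $V$, forces $V$ to intertwine the resolvents: $V(A_1 - z I_{\cH_1})^{-1} = (A_2 - z I_{\cH_2})^{-1} V$ for all $z \in \bbC \setminus \bbR$. Indeed, applying $V$ to $(A_1 - z I_{\cH_1})^{-1} f$ and using \eqref{2.4} gives $(A_2 - z I_{\cH_2}) V (A_1 - z I_{\cH_1})^{-1} f = V f$, and inverting $(A_2 - z I_{\cH_2})$ yields the claim. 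From resolvent intertwining one passes to $V E_{A_1}(\lambda) = E_{A_2}(\lambda) V$ for all $\lambda \in \bbR$ by Stone's formula (writing spectral projections as strong limits of resolvent integrals), and hence $V E_{A_1}(M) = E_{A_2}(M) V$ for every Borel set $M \subseteq \bbR$. In particular, for $f \in \cH_1$ and any Borel set $M$,
\begin{equation}
\|E_{A_2}(M) V f\|_{\cH_2}^2 = \|V E_{A_1}(M) f\|_{\cH_2}^2 \le \|V\|_{\cB(\cH_1,\cH_2)}^2 \, \|E_{A_1}(M) f\|_{\cH_1}^2,
\end{equation}
which shows that the measure $d\rho_{Vf}^{(2)}$ associated with $A_2$ and $Vf$ is absolutely continuous with respect to $d\rho_f^{(1)}$ (associated with $A_1$ and $f$), with Radon–Nikodym derivative bounded by $\|V\|^2$. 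Consequently, if $f \in \dom(\psi(A_1))$, i.e. $\int_{\bbR} |\psi(\lambda)|^2 \, d\rho_f^{(1)}(\lambda) < \infty$, then also $\int_{\bbR} |\psi(\lambda)|^2 \, d\rho_{Vf}^{(2)}(\lambda) < \infty$, giving $Vf \in \dom(\psi(A_2))$ and proving \eqref{2.6}. The identity \eqref{2.7} then follows by approximating $\psi$ with bounded functions $\psi_n = \psi \cdot \chi_{\{|\psi| \le n\}}$: for bounded $\psi_n$ the intertwining $V \psi_n(A_1) = \psi_n(A_2) V$ holds since both sides are bounded operators agreeing on the spectral calculus (being uniform limits of polynomials in the resolvents, or directly from $V E_{A_1}(M) = E_{A_2}(M) V$), and one lets $n \to \infty$ using dominated convergence on $\dom(\psi(A_1))$ for the left side and on $V\dom(\psi(A_1)) \subseteq \dom(\psi(A_2))$ for the right side.

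For part $(ii)$, fix $f \in \dom(T) = \dom(A_1)$. By \eqref{2.9}, $f \in \dom(\psi(A_1))$, so $\psi(A_1) f$ is well-defined, and by part $(i)$ applied to $\psi$ we have $Vf \in \dom(\psi(A_2))$ with $V\psi(A_1)f = \psi(A_2)Vf$. We must check that $\psi(A_1) f \in \dom(\phi(A_2) V)$, i.e. that $V\psi(A_1)f \in \dom(\phi(A_2))$, and that $\phi(A_2)V\psi(A_1)f = Tf$. Since $V\psi(A_1)f = \psi(A_2)Vf$, we need $\psi(A_2)Vf \in \dom(\phi(A_2))$ and $\phi(A_2)\psi(A_2)Vf = Tf = A_2 Vf$. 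Now $Vf \in \dom(A_2)$ by \eqref{2.3}, and the functional-calculus relation \eqref{2.8}, $\phi(\lambda)\psi(\lambda) = \lambda$, gives on the level of operators that $\dom(A_2) \cap \dom(\psi(A_2)) \subseteq \dom(\phi(A_2)\psi(A_2))$ with $\phi(A_2)\psi(A_2) g = A_2 g$ there; more precisely, for $g = Vf$ one has $\int_{\bbR}|\psi(\lambda)|^2 d\rho_g^{(2)}(\lambda) < \infty$ (since $g \in \dom(\psi(A_2))$ by part $(i)$) and the element $\psi(A_2) g$ has spectral measure $d\rho_{\psi(A_2)g}^{(2)} = |\psi(\lambda)|^2 d\rho_g^{(2)}$, so $\int_{\bbR}|\phi(\lambda)|^2 d\rho_{\psi(A_2)g}^{(2)} = \int_{\bbR}|\phi(\lambda)\psi(\lambda)|^2 d\rho_g^{(2)} = \int_{\bbR}|\lambda|^2 d\rho_g^{(2)} = \|A_2 g\|_{\cH_2}^2 < \infty$, using $g \in \dom(A_2)$. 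Hence $\psi(A_2)g \in \dom(\phi(A_2))$ and $\phi(A_2)\psi(A_2)g = A_2 g$ by the composition rule of the functional calculus. Assembling these facts: $\phi(A_2) V \psi(A_1) f = \phi(A_2)\psi(A_2) Vf = A_2 Vf = Tf$, which is \eqref{2.10}; the equality of domains $\dom(\phi(A_2)V\psi(A_1)) = \dom(A_1)$ is built into this computation since every step was valid precisely for $f \in \dom(A_1)$.

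The main obstacle I anticipate is the passage from the resolvent intertwining to the measure-domination inequality $\|E_{A_2}(M)Vf\| \le \|V\|\,\|E_{A_1}(M)f\|$ with full rigor — one must be careful that Stone's formula converges in the strong operator topology and that the intertwining survives the limit, and that $d E_A$-measurability of $\psi$ (as defined in the excerpt via $d\rho_f$-measurability) is genuinely enough to make all the integrals and the approximation $\psi_n \to \psi$ behave, rather than the stronger hypothesis of Borel measurability. A secondary subtlety is the composition rule $\phi(A_2)\psi(A_2) \supseteq (\phi\psi)(A_2)$ used with the \emph{exact} equality $\phi\psi = \lambda$: one needs the standard fact that for measurable $\phi, \psi$, $\dom((\phi\psi)(A)) \cap \dom(\psi(A)) $ together with the natural domain condition yields $\psi(A)g \in \dom(\phi(A))$ and $\phi(A)(\psi(A)g) = (\phi\psi)(A)g$, which is a routine but not entirely trivial consequence of the multiplicativity of the spectral calculus on the appropriate domains. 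Everything else is bookkeeping with spectral measures.
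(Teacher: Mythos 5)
Your proposal is correct and follows essentially the same route as the paper: resolvent intertwining, passage to the spectral-projection intertwining $V E_{A_1}(\lambda)=E_{A_2}(\lambda)V$, the resulting spectral-measure domination giving the domain inclusion \eqref{2.6} and the operator identity \eqref{2.7}, and finally the composition $\phi(A_2)\psi(A_2)V = A_2 V = T$ on $\dom(A_1)$ for part $(ii)$. The one technical difference is at \eqref{2.7}: the paper avoids your truncation $\psi_n\to\psi$ by computing $(V\psi(A_1)f,g)_{\cH_2}=\int_{\bbR}d(VE_{A_1}(\lambda)f,g)_{\cH_2}\,\psi(\lambda)$ directly and applying the projection intertwining inside the integral, which sidesteps the measurability/convergence worry you flag at the end; your more explicit domain bookkeeping in part $(ii)$ fills in exactly what the paper's one-line chain implicitly relies on.
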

\begin{proof}
Since $VA_1\subseteq A_2 V$, one infers $V (A_1 - z I_{\cH_1}) \subseteq (A_2 - z I_{\cH_2}) V$ and hence
\begin{equation}
V (A_1 - z I_{\cH_1})^{-1} = (A_2 - z I_{\cH_2})^{-1} V, \quad z \in\bbC\backslash\bbR.   \lb{2.11}
\end{equation}
In the following we denote by $\{E_{A_j}(\lambda)\}_{\lambda\in\bbR}$ the family of (strongly right-continuous) spectral projections of the self-adjoint operators $A_j$, $j=1,2$. Then, the representation
(cf.\ \cite[Sect.\ VI.5.2]{Ka80})
\begin{equation}
E_{A_j}(\lambda) = I_{\cH_j}
-\tfrac{1}{2} \big[U_j (\lambda) + U_j (\lambda)^2\big], \quad \lambda \in \bbR, \; j=1,2,    \lb{2.12}
\end{equation}
where
\begin{equation}
U_j (\lambda) = \slim_{\varepsilon\downarrow 0, \, R\uparrow \infty} \f{2}{\pi} \int_\varepsilon^R
d\eta \, (A_j - \lambda I_{\cH_j}) \big[(A_j - \lambda I_{\cH_j})^2 + \eta^2 I_{\cH_j}\big]^{-1},
\quad \lambda \in \bbR,  \; j=1,2,    \lb{2.13}
\end{equation}
(here $\slim$ denotes the strong limit in $\cH_j$) yields
\begin{equation}
V E_{A_1}(\lambda) = E_{A_2}(\lambda) V, \quad \lambda \in \bbR.   \lb{2.14}
\end{equation}

Next, choose $f\in \dom(\psi(A_1))$. Then
\begin{align}
& \int_{-R}^R d \|E_{A_2}(\lambda) V f\|^2_{\cH_2} |\psi(\la)|^2
 = \int_{-R}^R d \|V E_{A_1}(\lambda) f\|^2_{\cH_2} |\psi(\la)|^2  \no \\
& \quad \underset{{R \uparrow \infty} }\longrightarrow
\int_{\bbR} d \|V E_{A_1}(\lambda) f\|^2_{\cH_2} |\psi(\la)|^2
\leq  \|V\|^2_{\cB(\cH_1,\cH_2)} \|\psi (A_1) f\|^2_{\cH_1}.    \lb{2.15}
\end{align}
Thus, $f\in \dom(\psi(A_1))$ implies $V f\in \dom(\psi(A_2))$, proving \eqref{2.6}.

Choosing $f\in \dom(\psi(A_1))$ and $g\in\cH_2$ then yields
\begin{align}
(V \psi(A_1)f,g)_{\cH_2} &= \int_{\bbR} d(V E_{A_1}(\la)f,g)_{\cH_2} \psi(\la)    \no \\
&= \int_{\bbR} d(E_{A_2}(\la) V f,g)_{\cH_2} \psi(\la)    \no \\
&= (\psi(A_2) V f,g)_{\cH_2},    \lb{2.16}
\end{align}
and hence \eqref{2.7} is proven.

Finally, \eqref{2.10} follows from \eqref{2.7}--\eqref{2.9} since
\begin{align}
\phi(A_2) V \psi(A_1) = \phi(A_2) \psi(A_2) V = A_2 V = T,     \lb{2.17}
\end{align}
concluding the proof.
\end{proof}

\begin{remark} \lb{r2.2}
$(i)$ The crucial intertwining relation \eqref{2.14} also follows from
\eqref{2.11} and the Stieltjes inversion formula for (finite) complex
measures (cf., e.g., \cite[App.\ B]{We80}). Indeed,
\begin{equation}
\int_{\bbR} d(V E_{A_1}(\la) f,g)_{\cH_2} (\la -z)^{-1}
= \int_{\bbR} d(E_{A_2}(\la) V f,g)_{\cH_2} (\la -z)^{-1}, \quad z\in\bbC\backslash\bbR, \; f\in\cH_1, \,
g\in\cH_2,      \lb{2.18}
\end{equation}
implies
\begin{equation}
d(V E_{A_1}(\cdot) f,g)_{\cH_2} = d(E_{A_2}(\cdot) V f,g)_{\cH_2}, \quad f\in\cH_1, \, g\in\cH_2,
\lb{2.18a}
\end{equation}
and hence \eqref{2.14}. \\
$(ii)$ In the special case where in addition to \eqref{2.1}--\eqref{2.4}, $A_j$ are bounded,
$A_j\in\cB(\cH_j)$, $j=1,2$, one can also derive \eqref{2.10} for functions $\phi$ and $\psi$ continuous in an open neighborhood  of the spectra of $A_1$ and $A_2$ using a Stone--Weierstrass approximation argument.
\end{remark}

Now we turn our attention to a pair of {\it normal}  operators $A_j$, $j=1,2$, with the aim of proving
the analog of Theorem \ref{t2.1} in this case. For an extensive treatment of normal operators and the spectral family and spectral theorem associated with them, we refer to
\cite[Sects.\ 5.6 and 7.5]{We80}.

Thus, we assume that
\begin{equation}
\text{$A_j$ are normal operators in $\cH_j$ with domains $\dom(A_j)$, $j=1,2$,}    \lb{2.14a}
\end{equation}
(i.e., $A_j A_j^* = A_j^* A_j$ and $\dom(A_j)=\dom(A_j^*)$, $j=1,2$) such that
\begin{equation}
\rho(A_1)\cap\rho(A_2) \neq \emptyset.    \lb{2.14b}
\end{equation}
In addition, suppose that
\begin{equation}
V \in \cB(\cH_1, \cH_2)       \lb{2.14c}
\end{equation}
satisfies
\begin{equation}
V \dom(A_1) \subseteq \dom(A_2).   \lb{2.14d}
\end{equation}
and assume that
\begin{equation}
VA_1 = A_2 V \, \text{ on } \, \dom(A_1).   \lb{2.14e}
\end{equation}

Given a normal operator $A$ in a complex separable Hilbert space $\cH$ we  denote by
$\{E_A(\nu)\}_{\nu\in\bbC}$ the family of spectral projections associated with $A$. We
recall that $\ol{(A + A^*)/2}$ and $\ol{(A  - A^*)/(2 i)}$ are self-adjoint, and we denote by
$\{ E_{\ol{(A + A^*)/2}}(\la)\}_{\la\in\bbR}$ and $\{ E_{\ol{(A - A^*)/(2i)}}(\la)\}_{\la\in\bbR}$ the corresponding family of spectral projections. Then the family of spectral projections
$\{ E_{A}(\nu)\}_{\nu\in\bbC}$ for the normal operator $A$ is given by (cf.\ \cite[Theorem\ 7.32]{We80})
\begin{align}
\begin{split}
& E_{A}(\nu) = E_{\ol{(A + A^*)/2}}(\la) \, E_{\ol{(A - A^*)/(2i)}}(\mu)
= E_{\ol{(A - A^*)/(2i)}}(\mu) \, E_{\ol{(A + A^*)/2}}(\la),    \\
& \hspace*{6.9cm} \nu = \lambda + i \mu \in \bbC, \; \la, \mu \in\bbR.    \lb{2.14eA}
\end{split}
\end{align}

In analogy to the self-adjoint case one then defines the function $\tau_f $ by
\begin{equation}
\tau_f \colon \begin{cases} \bbC \to [0,\infty), \\
\hspace*{.6mm} \nu \mapsto \|E_A(\nu) f\|_{\cH}^2, \end{cases}   f\in\cH.   \lb{2.14eB}
\end{equation}
As discussed in \cite[Appendix\ A.1]{We80},  introducing
\begin{equation}
N = L \times M = \{z\in\bbC \,|\, \Re(z)\in L, \, \Im(z) \in M\}
\end{equation}
for arbitrary intervals $L, M \subseteq \bbR$, then
\begin{equation}
\tau_f(N) = \|E_A(N) f\|_{\cH}^2 = \|E_{\ol{(A + A^*)/2}}(L) \, E_{\ol{(A - A^*)/(2i)}}(M) f\|_{\cH}^2
\end{equation}
defines a regular interval function and hence a measure $d \tau_f$ for each
$f\in\cH$. A function $\phi \colon\bbC\to\bbC$ is then called
{\it $d E_A$-measurable} if it is $d \tau_f$-measurable for all $f\in\cH$.

\begin{theorem}   \lb{t2.3}
Suppose $A_j$, $j=1,2$, and $V$ satisfy \eqref{2.14a}--\eqref{2.14e}, and consider the operator $T$ given by
\begin{equation}
T = VA_1 = A_2 V \, \text{ on } \, \dom(T)=\dom(A_1).   \lb{2.14f}
\end{equation}
$(i)$ If $\psi$ is both a $d E_{A_1}$- and $d E_{A_2}$-measurable function on $\bbC$ then
\begin{equation}
V \dom(\psi(A_1)) \subseteq \dom(\psi(A_2))    \lb{2.14g}
\end{equation}
and
\begin{equation}
V \psi(A_1) = \psi(A_2) V  \, \text{ on } \, \dom(\psi(A_1)).   \lb{2.14h}
\end{equation}
$(ii)$ Assume that $\phi$ and $\psi$ are simultaneously $d E_{A_1}$- and $d E_{A_2}$-measurable functions
on $\bbC$ such that
\begin{equation}
\phi(\lambda) \psi(\lambda) = \lambda, \quad \lambda \in \bbC,    \lb{2.14i}
\end{equation}
and
\begin{equation}
\dom(A_1) \subseteq \dom(\psi(A_1)).    \lb{2.14j}
\end{equation}
Then
\begin{equation}
T = \phi(A_2) V \psi(A_1)  \, \text{ on } \, \dom(T) = \dom(A_1).   \lb{2.14k}
\end{equation}
\end{theorem}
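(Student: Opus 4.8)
The plan is to mimic the proof of Theorem~\ref{t2.1}, replacing the real spectral family by the complex (two-parameter) spectral family of normal operators and using the decomposition \eqref{2.14eA}. First I would establish the intertwining relation at the level of resolvents: since \eqref{2.14e} gives $VA_1 \subseteq A_2 V$, one gets $V(A_1 - z I_{\cH_1}) \subseteq (A_2 - z I_{\cH_2}) V$, and for any $z \in \rho(A_1) \cap \rho(A_2)$ (nonempty by \eqref{2.14b}) this yields
\begin{equation}
V (A_1 - z I_{\cH_1})^{-1} = (A_2 - z I_{\cH_2})^{-1} V. \notag
\end{equation}
Note this is only available for $z$ in the common resolvent set, not for all nonreal $z$ as in the self-adjoint case, which is the principal technical difference.

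Next I would upgrade this to intertwining of the self-adjoint "real and imaginary parts". Observe that from $VA_1 \subseteq A_2 V$ and $V A_1^* \subseteq A_2^* V$ (the latter obtained by taking adjoints of $V^* A_2 \subseteq A_1 V^*$, which follows from \eqref{2.14e}; here one uses that $\dom(A_j)=\dom(A_j^*)$) one deduces $V \ol{(A_1+A_1^*)/2} \subseteq \ol{(A_2+A_2^*)/2}\, V$ and similarly for the imaginary parts. Then Theorem~\ref{t2.1}(i), applied to each of these pairs of self-adjoint operators with the characteristic function $\chi_{(-\infty,\la]}$, gives
\begin{equation}
V E_{\ol{(A_1+A_1^*)/2}}(\la) = E_{\ol{(A_2+A_2^*)/2}}(\la)\, V, \qquad
V E_{\ol{(A_1-A_1^*)/(2i)}}(\mu) = E_{\ol{(A_2-A_2^*)/(2i)}}(\mu)\, V, \notag
\end{equation}
for all $\la,\mu \in \bbR$. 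Combining these with \eqref{2.14eA} yields the crucial intertwining relation $V E_{A_1}(\nu) = E_{A_2}(\nu) V$ for all $\nu \in \bbC$, first on rectangles $N = L \times M$ and then, by the measure-generation procedure described before the theorem, for the generated complex spectral measures: $d(V E_{A_1}(\cdot) f, g)_{\cH_2} = d(E_{A_2}(\cdot) V f, g)_{\cH_2}$ for all $f \in \cH_1$, $g \in \cH_2$.

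From here the argument is verbatim that of Theorem~\ref{t2.1}. For part~(i), pick $f \in \dom(\psi(A_1))$ and estimate, using the intertwining of spectral measures and $\|V\|$-boundedness,
\begin{equation}
\int_{\bbC} d\|E_{A_2}(\nu) V f\|_{\cH_2}^2 \, |\psi(\nu)|^2
= \int_{\bbC} d\|V E_{A_1}(\nu) f\|_{\cH_2}^2 \, |\psi(\nu)|^2
\le \|V\|_{\cB(\cH_1,\cH_2)}^2 \, \|\psi(A_1) f\|_{\cH_1}^2 < \infty, \notag
\end{equation}
which proves $V f \in \dom(\psi(A_2))$, i.e.\ \eqref{2.14g}; and then for $f \in \dom(\psi(A_1))$, $g \in \cH_2$, integrating $\psi$ against the common spectral measure gives $(V\psi(A_1)f,g)_{\cH_2} = (\psi(A_2)Vf,g)_{\cH_2}$, hence \eqref{2.14h}. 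For part~(ii), \eqref{2.14j} guarantees $\psi(A_1)$ is defined on $\dom(A_1)$, and \eqref{2.14i} gives $\phi(A_2)\psi(A_2) = A_2$ on $\dom(A_2) \supseteq V\dom(A_1)$, so on $\dom(T) = \dom(A_1)$ one has $\phi(A_2) V \psi(A_1) = \phi(A_2) \psi(A_2) V = A_2 V = T$, which is \eqref{2.14k}.

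The main obstacle I anticipate is the step deriving the intertwining of the self-adjoint real/imaginary parts from \eqref{2.14e}: one must be careful that $V A_1^* \subseteq A_2^* V$ genuinely holds (this uses $\dom(A_1) = \dom(A_1^*)$ and a density/closedness argument on $V^* A_2 \subseteq A_1 V^*$), and that the operator sums $\ol{(A_1 \pm A_1^*)/(\cdots)}$ are handled on the correct (dense, $V$-invariant) cores so that the resolvent or Cayley-transform argument behind \eqref{2.14}–type relations still applies. An alternative to sidestep subtleties about operator sums is to work directly from the resolvent identity on $\rho(A_1)\cap\rho(A_2)$ together with the complex Stieltjes/double-Stieltjes inversion formula for the two-dimensional spectral measure of a normal operator, in the spirit of Remark~\ref{r2.2}(i); this route avoids forming $(A_j \pm A_j^*)$ altogether, at the cost of invoking the (standard but slightly heavier) inversion theory for complex spectral families.
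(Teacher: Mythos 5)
Your derivation of the key intertwining relation $V A_1^* \subseteq A_2^* V$ is incorrect; this step cannot be obtained ``by taking adjoints,'' and it is in fact the heart of the whole proof. Taking the adjoint of $V A_1 \subseteq A_2 V$ (using $(VA_1)^* = A_1^* V^*$ for bounded $V$ and densely defined $A_1$, together with $(A_2 V)^* \supseteq V^* A_2^*$) yields
\begin{equation}
V^* A_2^* \subseteq A_1^* V^*,   \notag
\end{equation}
which is an intertwining of $V^*$ with the \emph{adjoints} $A_1^*, A_2^*$ --- a formal reflection of the hypothesis --- not the relation $V^* A_2 \subseteq A_1 V^*$ you invoke, and not what you need. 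These two statements coincide only when $A_j = A_j^*$, i.e., precisely in the self-adjoint situation already covered by Theorem \ref{t2.1}; for general normal $A_j$ the passage from $VA_1 \subseteq A_2V$ to $VA_1^*\subseteq A_2^*V$ is the Fuglede--Putnam theorem and requires a genuine argument. You flag this as your ``main obstacle'' in the last paragraph, but the proposed repair (adjointing $V^*A_2\subseteq A_1V^*$) rests on a premise that does not follow from \eqref{2.14e}, and the alternative via 2-D Stieltjes inversion does not escape the problem either, since the complex inversion for the spectral measure of a normal operator itself involves both $(A_j-z)^{-1}$ and $(A_j^*-\ol z)^{-1}$.

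The paper supplies exactly this missing ingredient, and it is what makes the proof of Theorem \ref{t2.3} nontrivial: one forms the bounded self-adjoint operators $B_j(\zeta)=\zeta(A_j^*-\ol zI_{\cH_j})^{-1}+\ol\zeta(A_j-zI_{\cH_j})^{-1}$ (normality is used to make the two resolvents commute, so $B_j(\zeta)=B_j(\zeta)^*$), observes that $\zeta\mapsto e^{i\zeta(A_2^*-\ol z)^{-1}}Ve^{-i\zeta(A_1^*-\ol z)^{-1}}=e^{iB_2(\zeta)}Ve^{-iB_1(\zeta)}$ is entire and uniformly bounded in norm (the factors $e^{iB_j(\zeta)}$ are unitary), concludes constancy by Liouville's theorem, and differentiates at $\zeta=0$ to get $V(A_1^*-\ol z)^{-1}=(A_2^*-\ol z)^{-1}V$, hence $VA_1^*\subseteq A_2^*V$. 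Remark \ref{r2.4} explicitly credits this to Fuglede, Putnam, and Rosenblum. Once this is in hand, the remainder of your outline (form the real and imaginary parts, intertwine their spectral families via Theorem \ref{t2.1}\,$(i)$, assemble $E_{A_j}(\nu)$ through the product formula \eqref{2.14eA}, and rerun the estimates of Theorem \ref{t2.1} over $\bbC$) does match the paper, modulo one more detail you should spell out: the relation $V(A_1\pm A_1^*)=(A_2\pm A_2^*)V$, valid a priori on $\dom(A_1)=\dom(A_1^*)$, must be shown to extend to the closures $\ol{A_j\pm A_j^*}$ before Theorem \ref{t2.1} can be applied; the paper does this by a standard graph-limit argument in \eqref{2.14v}--\eqref{2.14A}.
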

\begin{proof}
The idea of the proof is to try to reduce the case of normal operators to that of self-adjoint ones treated in Theorem \ref{t2.1}. With this goal in mind, pick $z\in \bbC\backslash(\sigma(A_1)\cup\sigma(A_2))$ for the remainder of this proof. Then $VA_1 \subseteq A_2V$ implies again
\begin{equation}
V (A_1 - z I_{\cH_1})^{-1} = (A_2 - z I_{\cH_2})^{-1} V,    \lb{2.14l}
\end{equation}
and hence also
\begin{equation}
Ve^{i {\ol \zeta} (A_1 - z I_{\cH_1})^{-1}} = e^{i {\ol \zeta} (A_2 - z I_{\cH_2})^{-1}} V, \quad
\zeta\in\bbC,   \lb{2.14m}
\end{equation}
applying \eqref{2.14l} repeatedly to all terms in the norm convergent Taylor expansion of both  exponentials in \eqref{2.14m}. (Here $\ol \zeta$ denotes the complex conjugate of $\zeta\in\bbC$.) In particular,
\begin{equation}
V = e^{i {\ol \zeta} (A_2 - z I_{\cH_2})^{-1}} V e^{-i {\ol \zeta} (A_1 - z I_{\cH_1})^{-1}},  \quad
\zeta\in\bbC.   \lb{2.14n}
\end{equation}
Thus, one obtains
\begin{align}
e^{i \zeta (A_2^*-\ol z)^{-1}} V e^{-i \zeta (A_1^*-\ol z)^{-1}} & =
e^{i \zeta (A_2^*-\ol z)^{-1}} e^{i {\ol \zeta} (A_2 - z I_{\cH_2})^{-1}} V
e^{-i {\ol \zeta} (A_1 - z I_{\cH_1})^{-1}} e^{-i \zeta (A_1^*-\ol z)^{-1}} \no \\
& = e^{i B_2(\zeta)} V e^{-i B_1(\zeta)},   \quad \zeta\in\bbC,    \lb{2.14o}
\end{align}
where we have set
\begin{align}
\begin{split}
B_1(\zeta) =  \zeta (A_1^*-\ol z)^{-1} + {\ol \zeta} (A_1 - z I_{\cH_1})^{-1} = B_1(\zeta)^*,   \\
B_2(\zeta) = \zeta (A_2^*-\ol z)^{-1} + {\ol \zeta} (A_2 - z I_{\cH_2})^{-1} = B_2(\zeta)^*.   \lb{2.14p}
\end{split}
\end{align}
Consequently,
\begin{equation}
\big\|e^{i \zeta (A_2^*-\ol z)^{-1}} V e^{-i \zeta (A_1^*-\ol z)^{-1}}\big\|_{\cB(\cH_1,\cH_2)}
= \|V\|_{\cB(\cH_1,\cH_2)},
\quad \zeta \in \bbC.    \lb{2.14q}
\end{equation}
Since the left-hand side of \eqref{2.14o} is entire with respect to $\zeta\in\bbC$, the uniform boundedness in \eqref{2.14q} and Liouville's theorem yield that  the left-hand side of \eqref{2.14o} is actually constant with respect to $\zeta\in\bbC$. Thus, the left-hand side of \eqref{2.14o} equals its value at $\zeta=0$, allowing one to conclude that
\begin{equation}
e^{i \zeta (A_2^*-\ol z)^{-1}} V e^{-i \zeta (A_1^*-\ol z)^{-1}}  = V, \quad \zeta\in\bbC.   \lb{2.14r}
\end{equation}
Differentiating \eqref{2.14r} with respect to $\zeta$ and subsequently taking $\zeta =0$, then yields
\begin{equation}
V (A_1^*-\ol z)^{-1} = (A_2^*-\ol z)^{-1} V,   \lb{2.14s}
\end{equation}
and consequently,
\begin{equation}
V A_1^* = A_2^* V \, \text{ on } \, \dom(A_1^*).   \lb{2.14t}
\end{equation}

Equations \eqref{2.14f} and \eqref{2.14t} together imply
\begin{equation}
V (A_1 \pm A_1^*) = (A_2 \pm A_2^*) V \, \text{ on } \, \dom(A_1)=\dom(A_1^*).   \lb{2.14u}
\end{equation}

Next we will show that \eqref{2.14u} extends to the closures of $A_j \pm A_j^*$, $j=1,2$, as follows: First, we note that $A_j \pm A_j^*$, $j=1,2$, are symmetric and hence closable. Next, pick arbitrary
$f_\pm \in \dom\big(\ol{A_1\pm A_1^*}\big)$ and let
$f_{\pm,n}\in\dom(A_1)= \dom(A_1^*)$ be such that
\begin{equation}
\lim_{n\to\infty} \|f_{\pm,n} - f_\pm \|_{\cH_1} =0 \, \text{ and } \,
\lim_{n\to\infty} \big\|(A_1\pm A_1^*)f_{\pm,n} - (\ol{A_1\pm A_1^*})f_\pm \big\|_{\cH_1} =0.   \lb{2.14v}
\end{equation}
Given that $V\in\cB(\cH_1,\cH_2)$, one also has
\begin{equation}
\lim_{n\to\infty} \|V f_{\pm,n} - V f_\pm \|_{\cH_2} =0 \, \text{ and } \,
\lim_{n\to\infty} \big\|V (A_1\pm A_1^*)f_{\pm,n} - V(\ol{A_1\pm A_1^*})f_\pm \big\|_{\cH_2} =0.
\lb{2.14w}
\end{equation}
Since $A_2 \pm A_2^*$ are closable and
\begin{equation}
\lim_{n\to\infty} \big\|(A_2\pm A_2^*) V f_{\pm,n} - V(\ol{A_1\pm A_1^*})f_\pm \big\|_{\cH_2} =
\lim_{n\to\infty} \big\|V (A_1\pm A_1^*)f_{\pm,n} - V(\ol{A_1\pm A_1^*})f_\pm \big\|_{\cH_2} =0,   \lb{2.14x}
\end{equation}
one obtains
\begin{equation}
V f_{\pm,n} \in \dom\big(\ol{A_2\pm A_2^*}\big) \, \text{ and } \,
\lim_{n\to\infty} \big\|(A_2\pm A_2^*) V f_{\pm,n} - (\ol{A_2\pm A_2^*}) V f_\pm \big\|_{\cH_2} = 0,
\lb{2.14y}
\end{equation}
and thus,
\begin{equation}
V(\ol{A_1\pm A_1^*})f_\pm = (\ol{A_2\pm A_2^*}) V f_\pm.    \lb{2.14z}
\end{equation}
Upon recalling that $f_\pm \in \dom\big(\ol{A_1\pm A_1^*}\big)$ were arbitrary, this finally implies that
\begin{equation}
V \ol{(A_1 \pm A_1^*)} = \ol{(A_2 \pm A_2^*)} V \, \text{ on } \, \dom\big(\ol{A_1 \pm A_1^*}\big).
\lb{2.14A}
\end{equation}

Next, we recall that $\ol{(A_j + A_j^*)/2}$ and $\ol{(A_j - A_j^*)/(2 i)}$, $j=1,2$, are self-adjoint, and we denote by $\{ E_{\ol{(A_j + A_j^*)/2}}(\la)\}_{\la\in\bbR}$ and
$\{ E_{\ol{(A_j - A_j^*)/(2i)}}(\la)\}_{\la\in\bbR}$,
$j=1,2$, the corresponding family of spectral projections.

Analogously to \eqref{2.14eA}, the families of spectral projections
$\{ E_{A_j}(\nu)\}_{\nu\in\bbC}$ for the normal operators $A_j$, $j=1,2$, are given by
\begin{align}
\begin{split}
& E_{A_j}(\nu) = E_{\ol{(A_j + A_j^*)/2}}(\la) \, E_{\ol{(A_j - A_j^*)/(2i)}}(\mu)
= E_{\ol{(A_j - A_j^*)/(2i)}}(\mu) \, E_{\ol{(A_j + A_j^*)/2}}(\la),    \\
& \hspace*{6.1cm} \nu = \lambda + i \mu \in \bbC, \; \la, \mu \in\bbR, \;\; j=1,2.    \lb{2.14B}
\end{split}
\end{align}
As in the proof of \eqref{2.14}, equations \eqref{2.14A} then yield
\begin{equation}
V E_{\ol{(A_1 + A_1^*)/2}}(\la) = E_{\ol{(A_2 + A_2^*)/2}}(\la) V,  \quad
V E_{\ol{(A_1 - A_2^*)/(2i)}}(\mu) = E_{\ol{(A_1 - A_2^*)/(2i)}}(\mu) V,
\quad \la, \mu \in\bbR.    \lb{2.14C}
\end{equation}
From \eqref{2.14B} and \eqref{2.14C} one then deduces that
\begin{equation}
V E_{A_j}(\nu) = E_{A_j}(\nu) V, \quad \nu \in\bbC, \; j=1,2.   \lb{2.14D}
\end{equation}
With this in hand, the proof is then completed by following the last part
of the proof of Theorem \ref{t2.1} step by step (replacing $\int_{-R}^R $ by $\int_{|\nu|\le R}$, etc.).
\end{proof}

\begin{remark} \lb{r2.4}
We note that the strategy just employed to prove that \eqref{2.14f} implies \eqref{2.14t} is essentially outlined in the special context of similarity and unitarity of normal operators (where $A_2=A_1$) in
\cite[p.\ 219]{We80}. After completing this proof, we became aware of the detailed history of this type of results: Aparently, Fuglede \cite{Fu50} first proved that $VA \subseteq AV$, with $V$ bounded and $A$ normal, implies $VA^* \subseteq A^*V$. This was extended by Putnam \cite{Pu51} to the result at hand, viz., $VA_1 \subseteq A_2V$, with $V$ bounded and $A_j$ normal, $j=1,2$, implies $VA_1^* \subseteq A_2^*V$. Finally, the proof of \eqref{2.14t} we presented is basically due to Rosenblum \cite{Ro58}. For the convenience of the reader (and for some measure of completeness) we decided to keep the short proof of \eqref{2.14t}. For a detailed history of this circle of ideas we refer to \cite[p.\ 9--11]{Pu67}.
\end{remark}

\begin{remark} \lb{r2.5}
For $A_j\in\cB(\cH_j)$ ($A_j$ not necessarily normal), $j=1,2$, and  functions $\phi, \,  \psi$ analytic in
an open neighborhood of the spectra of $A_1$ and $A_2$, one can also use the Dunford--Taylor functional calculus (see, e.g.,
\cite[Sect.\ VII.3]{DS88}) to prove \eqref{2.14k}.
\end{remark}

To make the connection with the polar decomposition of densely defined closed operators in Hilbert spaces, and some of its generalizations, which originally motivated the writing of this paper, we next recall a few facts: Given a densely defined, closed linear operator $S\colon \dom(S) \to \cH_2$,
$\dom(S)\subseteq\cH_1$, the self-adjoint operator $|S|$ is defined as usual by
\begin{equation}
|S|=(S^*S)^{1/2} \geq 0.   \lb{2.19}
\end{equation}
Moreover, we denote by $P_{\cM}$ the orthogonal projection onto the
closed linear subspace $\cM$ of a Hilbert space. The basic facts about the polar decomposition of closed linear operators then read as follows:

\begin{theorem} $($\cite[Sect.\,VI.2.7]{Ka80} $($see also \cite[Sect.\,IV.3]{EE89}$))$  \label{t2.6}
${}$\\
Let $T \colon \dom(T)\subseteq{\mathcal{H}}_1\to {\mathcal{H}}_2$ be a densely defined, closed linear operator. Then,
\begin{align}
T&=U|T|=|T^\ast|U=UT^*U  \, \text{ on } \, \dom(T) = \dom(|T|),  \label{2.20} \\
T^*&=U^*|T^*|=|T|U^*=U^*TU^*   \, \text{ on } \, \dom(T^*) = \dom(|T^*|),   \label{2.21} \\
|T|&=U^*T=T^*U=U^*|T^*|U \, \text{ on } \, \dom(|T|),    \label{2.22} \\
 |T^*|&=UT^*=TU^*=U|T|U^*  \, \text{ on } \, \dom(|T^*|),   \label{2.23}
\end{align}
where
\begin{equation}
U^*U=P_{\ol{{\ran}(|T|)}}=P_{\ol{{\ran}(T^*)}} \, ,  \quad
UU^*=P_{\ol{{\ran}(|T^*|)}}=P_{\ol{{\ran}(T)}} \, .      \label{2.24}
\end{equation}
In particular, $U$ is a partial isometry with initial set $\ol{{\ran}(|T|)}$
and final set $\ol{{\ran}(T)}$.
\end{theorem}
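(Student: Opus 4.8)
The plan is to build the polar decomposition from scratch, taking von Neumann's theorem about the operator $T^*T$ as the point of departure. First I would invoke von Neumann's theorem: $T^*T$ is a densely defined nonnegative self-adjoint operator in $\cH_1$, and $\dom(T^*T)$ is a core for $T$. Setting $|T| := (T^*T)^{1/2}$ as in \eqref{2.19}, for every $f \in \dom(T^*T)$ one has the fundamental norm identity
\[
\big\||T| f\big\|_{\cH_1}^2 = \big(|T|^2 f, f\big)_{\cH_1} = (T^*T f, f)_{\cH_1} = \|Tf\|_{\cH_2}^2 .
\]
Since $\dom(T^*T)$ is a core both for $|T|$ and for $T$, and both of these operators are closed, a routine approximation argument upgrades this to
\[
\dom(|T|) = \dom(T), \qquad \big\||T|f\big\|_{\cH_1} = \|Tf\|_{\cH_2}, \quad f \in \dom(|T|) = \dom(T).
\]
In particular $\ker(|T|) = \ker(T)$, hence $\ol{\ran(|T|)} = \ker(|T|)^\perp = \ker(T)^\perp = \ol{\ran(T^*)}$.

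Next I would construct $U$ directly. Define $U$ on $\ran(|T|)$ by $U(|T|f) := Tf$; the norm identity above makes this definition unambiguous and shows it is isometric, so $U$ extends by continuity to an isometry from $\ol{\ran(|T|)}$ onto $\ol{\ran(T)}$, and we set $U := 0$ on the orthogonal complement $\ol{\ran(|T|)}^\perp = \ker(T)$. Then $U \in \cB(\cH_1, \cH_2)$ is a partial isometry with initial set $\ol{\ran(|T|)} = \ol{\ran(T^*)}$ and final set $\ol{\ran(T)}$, we have $T = U|T|$ on $\dom(T) = \dom(|T|)$ by construction, and the standard identities for partial isometries give $U^*U = P_{\ol{\ran(|T|)}} = P_{\ol{\ran(T^*)}}$ and $UU^* = P_{\ol{\ran(T)}}$, which is \eqref{2.24}. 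One observation worth recording is that $\ol{\ran(|T|)}$ reduces $|T|$ (since $|T|$ annihilates $\ker(|T|) = \ol{\ran(|T|)}^\perp$), so that $U^*U |T| = |T| = |T| U^*U$ on $\dom(|T|)$, and consequently $|T| P_{\ol{\ran(|T|)}} = |T|$ on $\dom(|T|)$.

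To obtain the remaining identities I would pass to $T^*$. Since $U$ is bounded, $T^* = (U|T|)^* = |T|\,U^*$, whence $\dom(T^*) = \{g \in \cH_2 : U^*g \in \dom(|T|)\}$. I claim $|T^*| = U|T|U^*$: this operator is nonnegative, and it is self-adjoint because, letting $W$ denote the unitary map $\ol{\ran(T)} \to \ol{\ran(|T|)}$ obtained by restricting $U^*$, the operator $U|T|U^*$ equals $W^{-1}\big(|T| \text{ restricted to } \ol{\ran(|T|)}\big) W$ on $\ol{\ran(T)}$ and vanishes on $\ol{\ran(T)}^\perp$; moreover, using $|T|U^*U|T| = |T|^2$ on $\dom(|T|^2) = \dom(T^*T)$ one computes $\big(U|T|U^*\big)^2 = U|T|^2 U^* = U(T^*T)U^* = TT^* = |T^*|^2$, so uniqueness of the nonnegative self-adjoint square root yields $|T^*| = U|T|U^*$ (and in particular $\dom(|T^*|) = \dom(T^*)$). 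With the four relations $T = U|T|$, $T^* = |T|U^*$, $|T^*| = U|T|U^*$, and \eqref{2.24} in hand, all of \eqref{2.20}--\eqref{2.23} follow by elementary algebra; for instance $|T^*|U = U|T|U^*U = U|T| = T$, $UT^*U = U|T|U^*U = U|T| = T$, $U^*T = U^*U|T| = P_{\ol{\ran(|T|)}}|T| = |T|$, $T^*U = |T|U^*U = |T|$, $UT^* = U|T|U^* = |T^*|$, and the analogous manipulations yield the $T^*$-, $|T|$-, and $|T^*|$-versions, along with the asserted initial and final sets of $U$ recorded in the previous step.

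The main obstacle is the first step — establishing $\dom(|T|) = \dom(T)$ together with the norm identity on this common domain. This is precisely the place where von Neumann's theorem (self-adjointness and nonnegativity of $T^*T$, and the fact that $\dom(T^*T)$ is a core for $T$) is indispensable; granting it, everything else is bookkeeping, the only further mild subtlety being the self-adjointness of the sandwiched operator $U|T|U^*$, which is taken care of by the observation that $\ol{\ran(|T|)}$ reduces $|T|$.
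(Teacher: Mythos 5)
The paper does not include a proof of Theorem \ref{t2.6}; it simply cites \cite[Sect.\,VI.2.7]{Ka80} and \cite[Sect.\,IV.3]{EE89}, and the route you take (von Neumann's theorem for $T^*T$, the norm identity $\||T|f\|_{\cH_1}=\|Tf\|_{\cH_2}$ on $\dom(|T|)=\dom(T)$, the construction of $U$ by isometric extension from $\ran(|T|)$, and the identification $|T^*|=U|T|U^*$ via uniqueness of the nonnegative self-adjoint square root) is exactly the standard argument in those references. Your proof is correct and complete; the only tiny point worth making explicit is that $\ol{\ran(|T^*|)}=\ker(|T^*|)^\perp=\ker(T^*)^\perp=\ol{\ran(T)}$, which supplies the remaining equality $UU^*=P_{\ol{\ran(|T^*|)}}$ in \eqref{2.24}.
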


Identifying $V=U$, $A_1=|T|$, $A_2=|T^*|$, Theorem \ref{t2.1} immediately implies the following generalized polar decomposition of $T$ in \eqref{2.20}:

\begin{theorem}   \lb{t2.7}
Let $T\colon \dom(T)\to\cH_2$, $\dom(T)\subseteq\cH_1$ be a densely defined closed operator with polar decomposition as in \eqref{2.20}. In addition, assume that $\phi$ and $\psi$ are Borel functions on $\bbR$ such that $\phi(\lambda) \psi(\lambda) = \lambda$, $\lambda \in \bbR$ and
$\dom(|T|) \subseteq \dom(\psi(|T|))$. Then $T$ has the representation
\begin{equation}
T = \phi(|T^*|) U \psi(|T|)  \, \text{ on } \, \dom(T) = \dom(|T|).   \lb{2.25}
\end{equation}
In particular, for each $\alpha \in [0,1]$,
\begin{equation}
T = |T^*|^\alpha U |T|^{1-\alpha}  \, \text{ on } \, \dom(T) = \dom(|T|).    \lb{2.26}
\end{equation}
\end{theorem}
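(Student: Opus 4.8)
The plan is to obtain Theorem~\ref{t2.7} as a direct specialization of Theorem~\ref{t2.1}, after verifying that the polar decomposition data fits the hypotheses \eqref{2.1}--\eqref{2.4}. Concretely, I would set $A_1 = |T|$ acting in $\cH_1$, $A_2 = |T^*|$ acting in $\cH_2$, and $V = U$, the partial isometry from Theorem~\ref{t2.6}. Since $|T| \ge 0$ and $|T^*| \ge 0$ are self-adjoint, \eqref{2.1} holds; since $U$ is a partial isometry it is bounded with $\|U\|_{\cB(\cH_1,\cH_2)} \le 1$, so \eqref{2.2} holds. The domain inclusion \eqref{2.3}, i.e.\ $U\,\dom(|T|) \subseteq \dom(|T^*|)$, and the intertwining relation \eqref{2.4}, i.e.\ $U|T| = |T^*|U$ on $\dom(|T|)$, are exactly the content of the identity $T = U|T| = |T^*|U$ on $\dom(T) = \dom(|T|)$ recorded in \eqref{2.20} of Theorem~\ref{t2.6}: for $f \in \dom(|T|)$ we have $U|T|f = Tf = |T^*|(Uf)$, which simultaneously shows $Uf \in \dom(|T^*|)$ and that the two operators agree there.

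With the hypotheses \eqref{2.1}--\eqref{2.4} in place and $T$ in \eqref{2.5} identified with the operator $T$ of the present theorem, I would then invoke Theorem~\ref{t2.1}(ii) with the given Borel functions $\phi, \psi$: the product condition $\phi(\la)\psi(\la) = \la$ for $\la \in \bbR$ is \eqref{2.8}, the domain hypothesis $\dom(|T|) \subseteq \dom(\psi(|T|))$ is \eqref{2.9}, and Borel functions on $\bbR$ are automatically both $dE_{|T|}$- and $dE_{|T^*|}$-measurable (as noted in the text right after \eqref{2.4b}). Hence \eqref{2.10} gives $T = \phi(|T^*|)\,U\,\psi(|T|)$ on $\dom(T) = \dom(|T|)$, which is \eqref{2.25}.

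For the special case \eqref{2.26}, fix $\al \in [0,1]$ and take $\psi(\la) = \la^{1-\al}$, $\phi(\la) = \la^{\al}$ for $\la \ge 0$ (and, say, $\phi = \psi = 0$ on $(-\infty,0)$, which is irrelevant since $\si(|T|), \si(|T^*|) \subseteq [0,\infty)$); these are Borel and satisfy $\phi(\la)\psi(\la) = \la$ for $\la \ge 0$. The only remaining point is the domain inclusion $\dom(|T|) \subseteq \dom(|T|^{1-\al})$, which follows from the spectral theorem: for $f \in \dom(|T|)$ one has $\int_{[0,\infty)} (1 + \la^2)\, d\|E_{|T|}(\la)f\|^2 < \infty$, and since $\la^{2(1-\al)} \le 1 + \la^2$ for all $\la \ge 0$ and $\al \in [0,1]$, also $\int_{[0,\infty)} \la^{2(1-\al)}\, d\|E_{|T|}(\la)f\|^2 < \infty$, i.e.\ $f \in \dom(|T|^{1-\al})$. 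Then \eqref{2.25} specializes to \eqref{2.26}.

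I do not anticipate a genuine obstacle here: the result is a corollary, and the only thing requiring a moment's care is making sure the bookkeeping in the identification $V = U$, $A_1 = |T|$, $A_2 = |T^*|$ is consistent with the orientation of the intertwining relation in Theorem~\ref{t2.6} (i.e.\ that it is $U|T| = |T^*|U$ and not the other way around), together with the elementary spectral-calculus estimate $\la^{2(1-\al)} \le 1 + \la^2$ used to verify $\dom(|T|) \subseteq \dom(|T|^{1-\al})$. If anything, the mildly delicate step is confirming that the hypothesis $\dom(|T|) \subseteq \dom(\psi(|T|))$ of Theorem~\ref{t2.1}(ii) is what transfers verbatim, since Theorem~\ref{t2.7} states it for $\psi(|T|)$ and we apply part (ii) with the same $\psi$ in $\cH_1$.
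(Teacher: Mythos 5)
Your proposal is correct and takes exactly the same route as the paper: the paper states that Theorem~\ref{t2.7} follows by identifying $V = U$, $A_1 = |T|$, $A_2 = |T^*|$ in Theorem~\ref{t2.1}, which is precisely your specialization, and your verifications (that \eqref{2.20} supplies \eqref{2.3}--\eqref{2.4}, that Borel functions are $dE$-measurable, and the elementary bound $\la^{2(1-\al)} \le 1 + \la^2$ giving $\dom(|T|) \subseteq \dom(|T|^{1-\al})$ for \eqref{2.26}) correctly fill in the details the paper leaves implicit.
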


\begin{remark}  \lb{r2.8}
We note that in the case of a bounded operator $T$, \eqref{2.26} also follows from \eqref{2.20} and a
Stone--Weierstrass-type approximation argument. More precisely, approximating
the functions $\lambda\mapsto \lambda^{\alpha}$
uniformly by a sequence of polynomials on a compact interval
then yields \eqref{2.26} in analogy to the treatment in \cite[p.\ 6--7]{SF70} in connection
with contractions and their associated defect operators.
\end{remark}

\begin{remark}  \lb{r2.9}
The symmetric case $\alpha=1/2$ in \eqref{2.26} (which will play a special role in the following sections) permits a fairly simple and direct proof that we briefly sketch next: Define $R = U^* |T^*|^{1/2}U$.
Then $R\geq 0$ and $R$ is densely defined since $\dom(R) = \dom(|T^*|^{1/2}U) \supseteq \dom(|T^*|U) = \dom(T)$. Thus one concludes that $R$ is symmetric, $R^*\supseteq U^*|T^*|^{1/2}U = R$. In addition,
\begin{equation}
R^* R \supseteq U^*|T^*|^{1/2}UU^*|T^*|^{1/2}U = U^*|T^*|U = |T|,
\end{equation}
using the second relation in \eqref{2.24}, $UU^*=P_{\ol{{\ran}(|T^*|)}}=P_{\ol{{\ran}(|T^*|^{1/2})}}$. Thus
$(\ol{R})^* \ol{R} \supseteq |T|$, and since $|T|$ is self-adjoint and hence maximal, one obtains
$(\ol{R})^* \ol{R}= |T|$. In exactly the same manner one infers
\begin{equation}
R R^* \supseteq U^*|T^*|^{1/2}UU^*|T^*|^{1/2}U = U^*|T^*|U = |T|,
\end{equation}
hence, $\ol{R} (\ol{R})^* \supseteq |T|$ and thus, $(\ol{R})^* \ol{R} = \ol{R} (\ol{R})^* = |T|$.
That is, $\ol R$ is normal and symmetric, and hence self-adjoint. Since in addition, $\ol R \geq 0$,
$\ol R$ is the unique self-adjoint, nonnegative square root of $|T|$,
\begin{equation}
\ol R = (\ol R)^* = |T|^{1/2}.
\end{equation}
Introducing $S=U |T|^{1/2} U^*$, one obtains analogously,
\begin{equation}
\ol S = (\ol S)^* = |T^*|^{1/2}.
\end{equation}
Thus,
\begin{equation}
|T|^{1/2} \supseteq R= U^* |T^*|^{1/2}U, \quad |T^*|^{1/2} \supseteq S = U|T|^{1/2} U^*.
\end{equation}
Next, using also the first relation in \eqref{2.24}, $U^*U=P_{\ol{{\ran}(|T|)}}=P_{\ol{{\ran}(|T|^{1/2})}}$, one infers
\begin{equation}
 U^* |T^*|^{1/2} \supseteq U^*U |T|^{1/2} U^* = |T|^{1/2} U^*,   \quad
 |T|^{1/2} U^* \supseteq U^* |T^*|^{1/2} UU^* = U^* |T^*|^{1/2},
\end{equation}
and hence
\begin{equation}
U^* |T^*|^{1/2} = |T|^{1/2}U^*, \text{ implying } \, U U^* |T^*|^{1/2} = |T^*|^{1/2} = U |T|^{1/2} U^*.
\end{equation}
But then,
\begin{equation}
|T^*|^{1/2} U |T|^{1/2} = U |T|^{1/2} U^* U |T|^{1/2} = U |T| =T,
\end{equation}
as was to be proven.
\end{remark}

\section{Some Applications to Relatively (Form) Bounded and Relatively (Form) Compact
Perturbations of Self-Adjoint Operators} \lb{s3}

The symmetric version
\begin{equation}
T = |T^*|^{1/2} U |T|^{1/2}  \, \text{ on } \, \dom(T) = \dom(|T|)     \lb{3.1}
\end{equation}
of equation \eqref{2.26} permits some applications to relatively (form) bounded and compact perturbations of a self-adjoint operator which we briefly discuss in this section.

The first application concerns circumstances in which relatively bounded perturbations are also relatively form bounded perturbations of a self-adjoint operator. While, as noted in
\cite[Sect.\ VI.1.7]{Ka80}, there seems to be no general connection between relative boundedness and relative form boundedness, such a connection does exist for symmetric perturbations of a self-adjoint operator (cf.\ \cite[Sect.\ VI.1.7]{Ka80} and \cite[Sect.\ X.2]{RS75}). Here we add another result of this type.

To set the stage, we briefly recall the notion of relatively bounded and relatively form bounded perturbations of an operator $A$ in some complex separable Hilbert space $\cH$. For simplicity we will actually assume that $A$ is a closed operator with nonempty resolvent set for the remainder of this section. We recall the following definition:

\begin{definition} \lb{d3.1}
$(i)$ Suppose that $A$ is a closed operator in $\cH$ and $\rho(A)\neq \emptyset$. An operator $B$ in $\cH$ is called {\it relatively bounded} (resp., {\it relatively compact}\,) {\it with respect to $A$} (in short, $B$ is called {\it $A$-bounded} (resp., {\it $A$-compact}\,)), if
\begin{equation}
 \dom(B) \supseteq \dom(A) \, \text{ and } \,  B(A - z I_{\cH})^{-1} \in \cB(\cH) \;
 (\text{resp.,} \in \cB_\infty(\cH)), \quad  z \in \rho(A).   \lb{3.3}
\end{equation}
$(ii)$ Assume, in addition, that $A$ is self-adjoint in $\cH$ and bounded from below, that is,
$A \ge c I_{\cH}$ for some $c\in\bbR$. Then a densely defined and closed operator $B$ in $\cH$ is called {\it relatively form bounded} (resp., {\it relatively form compact}\,) {\it with respect to $A$} (in short, $B$ is called {\it $A$-form bounded} (resp., {\it $A$-form compact}\,)), if
\begin{equation}
 \dom\big(|B|^{1/2}\big) \supseteq \dom\big(|A|^{1/2}\big) \, \text{ and } \,
 |B|^{1/2} ((A + (1 - c) I_{\cH}))^{-1/2} \in \cB(\cH) \; (\text{resp.,} \in \cB_\infty(\cH)).
 \lb{3.4}
\end{equation}
\end{definition}

\medskip

In particular, $B$ is $A$-form bounded (resp., $A$-form compact), if and only if $|B|$ is.

We note that in Definition \ref{d3.1}\,$(ii)$, since $A^{1/2}$ and $|B|^{1/2}$ are closed,
$ \dom\big(|B|^{1/2}\big) \supseteq \dom\big(A^{1/2}\big)$ already implies
$|B|^{1/2} ((A + (1 - c) I_{\cH}))^{-1/2} \in \cB(\cH)$ (cf.\ \cite[Remark IV.1.5]{Ka80}), and hence the first condition in \eqref{3.4} suffices in the relatively form bounded context. In this context we note that in the special case where $B$ is self-adjoint, condition $(i)$ in the definition used by Reed and Simon \cite[p.\ 168]{RS75} already implies their condition $(ii)$. In fact, it implies a bit more, namely, the existence of $\alpha\geq 0$ and 
$\beta\geq 0$, such that 
\begin{equation}
\big|\big(|B|^{1/2}f, \sgn(B) |B|^{1/2}f\big)_{\cH}\big| \leq \big\||B|^{1/2}f\big\|_{\cH}^2 
\leq \alpha \big\||A|^{1/2}f\big\|_{\cH}^2 + \beta \|f\|_{\cH}^2, \quad 
f\in\dom\big(|A|^{1/2}\big).  
\end{equation}  

Similarly, if $B$ is closed (in fact, closability of $B$ suffices) in Definition \ref{d3.1}\,$(i)$, then the first condition $\dom(B) \supseteq \dom(A)$ in \eqref{3.3} already implies 
$B(A - z I_{\cH})^{-1} \in \cB(\cH)$, $z \in \rho(A)$, and hence the $A$-boundedness of $B$.

Using the polar decomposition of $B$ (i.e., $B=U|B|$), one observes that $B$ is $A$-bounded (resp., $A$-compact) if and only if $|B|$ is $A$-bounded (resp., $A$-compact).

We recall that in connection with relative boundedness, \eqref{3.3} can be replaced by the condition
\begin{align}
\begin{split}
& \dom(B) \supseteq \dom(A), \, \text{ and there exist numbers
$a\ge 0$, $b\ge 0$ such that} \\
& \|Bf\|_{\cH} \le a \|Af\|_{\cH} + b \|f\|_{\cH} \, \text{ for all $f\in\dom(A)$,}    \lb{3.5}
\end{split}
\end{align}
or equivalently, by
\begin{align}
\begin{split}
& \dom(B) \supseteq \dom(A), \, \text{ and there exist numbers $\wti a\ge 0$, $\wti b\ge 0$ such that} \\
& \|Bf\|^2_{\cH} \le {\wti a}^2 \|Af\|^2_{\cH} + {\wti b}^2 \|f\|^2_{\cH} \, \text{ for all $f\in\dom(A)$.}     \lb{3.6}
\end{split}
\end{align}

Clearly, \eqref{3.6} implies \eqref{3.5}  with $a=\wti a$, $b=\wti b$ and conversely, \eqref{3.5} implies \eqref{3.6} with ${\wti a}^2= (1 + \varepsilon) a^2$, ${\wti b}^2 = (1 + \varepsilon^{-1})b^2$ for each
$\varepsilon >0$. We also note that if $A$ is self-adjoint and bounded from below, the number $\alpha$ defined by
\begin{equation}
\alpha = \lim_{\mu\uparrow \infty} \big\|B(A+\mu I_{\cH})^{-1}\big\|_{\cB(\cH)}
= \lim_{\mu\uparrow \infty} \big\||B|(A+\mu I_{\cH})^{-1}\big\|_{\cB(\cH)}     \lb{3.7}
\end{equation}
equals the greatest lower bound (i.e., the infimum) of the possible values for $a$ in \eqref{3.5} (resp., for 
$\wti a$ in \eqref{3.6}). This number $\alpha$ is called the $A$-bound of $B$. Similarly, we call
\begin{equation}
\beta = 
\lim_{\mu\uparrow \infty} \big\||B|^{1/2}\big(|A|^{1/2}+\mu I_{\cH}\big)^{-1}\big\|_{\cB(\cH)}     \lb{3.7a}
\end{equation}
the $A$-form bound of $B$ (resp., $|B|$). If $\alpha =0$ in \eqref{3.7} (resp., $\beta =0$ in \eqref{3.7a}) then $B$ is called {\it infinitesimally bounded} (resp., {\it infinitesimally form bounded}\,) with respect to $A$.

We then have the following result:

\begin{theorem} \lb{t3.2}
Assume that $A\ge 0$ is self-adjoint in $\cH$. \\
$(i)$ Let $B$ be a closed, densely defined operator in $\cH$ and suppose that
$\dom(B)\supseteq\dom(A)$. Then $B$ is $A$-bounded and hence \eqref{3.5} holds for some constants $a\geq 0$, $b\geq 0$. In addition, $B$ is also $A$-form bounded,
\begin{equation}
 |B|^{1/2} (A +I_{\cH})^{-1/2} \in \cB(\cH).
\lb{3.8}
\end{equation}
More specifically,  
\begin{equation}
\big\||B|^{1/2}(A+I_{\cH})^{-1/2}\big\|_{\cB(\cH)} \le (a+b)^{1/2},    \lb{3.8a}
\end{equation}
and hence, if $B$ is $A$-bounded with $A$-bound $\alpha$ strictly less than one, $0\leq \alpha<1$ $($cf.\ \eqref{3.7}$)$, then $B$ is also $A$-form bounded with $A$-form bound $\beta$ strictly less than one, $0\leq \beta <1$ $($cf.\ \eqref{3.7a}$)$. In particular, if $B$ is infinitesimally bounded with respect to $A$, then $B$ is infinitesimally form bounded with respect to $A$. \\
$(ii)$ Suppose that $B$ is closed and densely defined in $\cH$, that 
$\dom(B) \cap \dom(B^*)\supseteq\dom(A)$, and hence \eqref{3.5} holds for some constants $a\geq 0$, $b\geq 0$. Then also $B^*$ is $A$-bounded, and hence \eqref{3.5} with $B$ replaced by $B^*$ holds for some constants $a^*\geq 0$, $b^*\geq 0$. In particular, 
\begin{equation}
|B^*|^{1/2} (A +I_{\cH})^{-1/2} \in \cB(\cH) \, \text{ and } \, 
\big\||B^*|^{1/2}(A+I_{\cH})^{-1/2}\big\|_{\cB(\cH)} \le (a^* + b^*)^{1/2}.    \lb{3.8b}
\end{equation}
Moreover, one has
\begin{align}
& \ol{(A +  I_{\cH})^{-1/2} B (A +  I_{\cH})^{-1/2}}, \,\,
\ol{(A + I_{\cH})^{-1/2} B^* (A + I_{\cH})^{-1/2}} \in \cB(\cH),   \lb{3.9}  \\
& \big\|\ol{(A+I_{\cH})^{-1/2}B(A+I_{\cH})^{-1/2}}\big\|_{\cB(\cH)}
 \le (a^* + b^*)^{1/2}(a+b)^{1/2},   \lb{3.9a}  \\
& \big\|\ol{(A+I_{\cH})^{-1/2}B^*(A+I_{\cH})^{-1/2}}\big\|_{\cB(\cH)}
 \le (a^* + b^*)^{1/2}(a+b)^{1/2}.    \lb{3.9b} 
\end{align}
\end{theorem}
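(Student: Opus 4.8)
The plan is to prove part $(i)$ first via the symmetric polar decomposition \eqref{3.1}, then derive part $(ii)$ from the Fuglede--Putnam-type transfer of the hypothesis from $B$ to $B^*$, and finally obtain \eqref{3.9}--\eqref{3.9b} by a duality argument. For part $(i)$: since $B$ is closed and densely defined with $\dom(B)\supseteq\dom(A)$, the closed graph theorem (applied to $B(A-zI_\cH)^{-1}$, which is everywhere defined and closed) gives $A$-boundedness, hence \eqref{3.5} for some $a,b\geq 0$; note $|B|$ then satisfies the same bound because $\||B|f\|_\cH = \|Bf\|_\cH$. The key computation is then, for $f\in\dom(A)$,
\begin{align}
\big\||B|^{1/2}f\big\|_\cH^2 &= \big(|B|f,f\big)_\cH \leq \||B|f\|_\cH \|f\|_\cH \no\\
&\leq \big(a\|Af\|_\cH + b\|f\|_\cH\big)\|f\|_\cH \leq a\|Af\|_\cH\|f\|_\cH + b\|f\|_\cH^2. \no
\end{align}
Writing $g = (A+I_\cH)^{1/2}f$ so that $f = (A+I_\cH)^{-1/2}g$ ranges over $\dom((A+I_\cH)^{1/2}) \supseteq \dom(A) \ni f$ — more carefully, I restrict to $f\in\dom(A)$, which is a core for $(A+I_\cH)^{1/2}$, and pass to the limit at the end — one has $\|Af\|_\cH \leq \|(A+I_\cH)f\|_\cH$ and, since $A\geq 0$, $\|(A+I_\cH)^{1/2}f\|_\cH^2 = \|Af\|_\cH\cdots$; the clean estimate is
\begin{equation}
a\|Af\|_\cH\|f\|_\cH + b\|f\|_\cH^2 \leq a\big\|(A+I_\cH)^{1/2}f\big\|_\cH^2 \cdot(\text{something}) + \cdots, \no
\end{equation}
and the right normalization is to bound $a\|Af\|_\cH\|f\|_\cH + b\|f\|_\cH^2 \leq (a+b)\big\|(A+I_\cH)^{1/2}f\big\|_\cH^2$ using $\|Af\|_\cH\|f\|_\cH \leq \tfrac12(\|Af\|_\cH^2 + \|f\|_\cH^2) \leq \|(A+I_\cH)^{1/2}f\|_\cH^2$ when... actually $\|Af\|_\cH\|f\|_\cH \leq \|(A+I_\cH)^{1/2}f\|_\cH^2$ holds since both sides equal $\|A^{1/2}f\|_\cH^2$-type quantities — this is where I will be careful: $\|(A+I_\cH)^{1/2}f\|_\cH^2 = \|Af\|_\cH^2/\cdots$ is false; rather $\|(A+I_\cH)^{1/2}f\|_\cH^2 = (Af,f)_\cH + \|f\|_\cH^2 \leq \|Af\|_\cH\|f\|_\cH + \|f\|_\cH^2$ gives an \emph{upper} bound, not lower. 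The correct route: $\|Af\|_\cH \|f\|_\cH \le \|(A+I_\cH)f\|_\cH \|f\|_\cH$, and by the spectral theorem $\|(A+I_\cH)f\|_\cH\|f\|_\cH = \int(\la+1)\,d\rho_f \cdot(\int d\rho_f)^{1/2}\cdots$; cleanest is Cauchy--Schwarz in the spectral integral: $\||B|^{1/2}f\|_\cH^2 \le a\int(\la+1)d\rho_f^{1/2}(\int d\rho_f)^{1/2}+b\int d\rho_f \le (a+b)\int(\la+1)d\rho_f = (a+b)\|(A+I_\cH)^{1/2}f\|_\cH^2$, using $\int(\la)^{1/2}\cdot 1 \,d\rho_f \le (\int \la\, d\rho_f)^{1/2}(\int d\rho_f)^{1/2} \le \int(\la+1)d\rho_f$. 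This yields \eqref{3.8a}, and since $\dom(A)$ is a core for $(A+I_\cH)^{1/2}$ and $|B|^{1/2}$ is closed, it extends to all of $\dom((A+I_\cH)^{1/2})$, giving \eqref{3.8} and the domain inclusion $\dom(|A|^{1/2})\subseteq\dom(|B|^{1/2})$. The $A$-bound statement then follows by replacing $I_\cH$ with $\mu I_\cH$ throughout and tracking that the constant becomes $(a+b/\mu)^{1/2}\to a^{1/2}$-type, so $\beta \le \alpha^{1/2} < 1$ when $\alpha<1$, and $\beta=0$ when $\alpha=0$.

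For part $(ii)$: the hypothesis $\dom(B)\cap\dom(B^*)\supseteq\dom(A)$ with $B$ closed gives, by the closed graph theorem applied to $B^*(A-zI_\cH)^{-1}$ (everywhere defined, closed since $B^*$ is closed), that $B^*$ is $A$-bounded, hence \eqref{3.5} holds for $B^*$ with constants $a^*, b^*$. Then \eqref{3.8b} is just part $(i)$ applied to the closed densely defined operator $B^*$ in place of $B$ (noting $|B^*| = |(B^*)^*{}^* \cdots|$ — precisely, apply $(i)$ to $C:=B^*$, whose modulus is $|C| = (C^*C)^{1/2} = (B^{**}B^*)^{1/2}$; since $B$ is closed $B^{**}=B$, so $|C| = (BB^*)^{1/2} = |B^*|$). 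For \eqref{3.9}: factor, using the symmetric polar decomposition $B = |B^*|^{1/2}U|B|^{1/2}$ from \eqref{3.1} (Theorem~\ref{t2.7} with $\alpha=1/2$), to write on $\dom(A)$
\begin{equation}
(A+I_\cH)^{-1/2}B(A+I_\cH)^{-1/2} = \big[(A+I_\cH)^{-1/2}|B^*|^{1/2}\big]\,U\,\big[|B|^{1/2}(A+I_\cH)^{-1/2}\big]. \no
\end{equation}
The right factor $|B|^{1/2}(A+I_\cH)^{-1/2}$ is bounded by \eqref{3.8}/\eqref{3.8a} with norm $\le (a+b)^{1/2}$; $U$ is a contraction; and the left factor is the adjoint of $|B^*|^{1/2}(A+I_\cH)^{-1/2}$, which is bounded by \eqref{3.8b} with norm $\le (a^*+b^*)^{1/2}$. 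Hence the composition, defined on $\dom(A)$, is bounded with norm $\le (a^*+b^*)^{1/2}(a+b)^{1/2}$; since $\dom(A)$ is dense, its closure lies in $\cB(\cH)$ with the same norm bound, giving \eqref{3.9a}. The estimate \eqref{3.9b} for $B^*$ is identical with the roles of $(a,b)$ and $(a^*,b^*)$ interchanged (using $B^{**}=B$ so that $|(B^*)^*| = |B|$), and \eqref{3.9} collects both.

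The main obstacle is purely bookkeeping around cores and closures: the factorizations and polar decompositions are only asserted on $\dom(B)=\dom(|B|)=\dom(A)$-sized domains, so at each step I must check that $\dom(A)$ is a core for the relevant square-root operator $(A+\cdot I_\cH)^{1/2}$ and invoke closedness of $|B|^{1/2}$, $|B^*|^{1/2}$ to extend bounded estimates from the core to the full form domain — and, in \eqref{3.9}, that the bounded extension of the composition agrees with the closure of the operator initially defined on $\dom(A)$. None of this is deep, but it is where a careless proof would have a gap; the Cauchy--Schwarz estimate in the spectral integral yielding the clean constant $(a+b)^{1/2}$ is the only genuine computation, and I flag above that the naive inequality $\|Af\|\|f\|\le\|(A+I_\cH)^{1/2}f\|^2$ must be justified through the spectral measure rather than by an incorrect operator manipulation.
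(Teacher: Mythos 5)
There is a genuine gap in your proof of part $(i)$, specifically in the derivation of \eqref{3.8a}. Starting from $\||B|^{1/2}f\|_{\cH}^2 = (|B|f,f)_{\cH} \le \||B|f\|_{\cH}\|f\|_{\cH} \le a\|Af\|_{\cH}\|f\|_{\cH} + b\|f\|_{\cH}^2$, you then need
\begin{equation}
a\|Af\|_{\cH}\|f\|_{\cH} + b\|f\|_{\cH}^2 \le (a+b)\big\|(A+I_{\cH})^{1/2}f\big\|_{\cH}^2
= (a+b)\big[(Af,f)_{\cH} + \|f\|_{\cH}^2\big], \no
\end{equation}
i.e.\ (take $b=0$) you need $\|Af\|_{\cH}\|f\|_{\cH} \le (Af,f)_{\cH} + \|f\|_{\cH}^2$. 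This is \emph{false}: Cauchy--Schwarz gives the reverse inequality $(Af,f)_{\cH} \le \|Af\|_{\cH}\|f\|_{\cH}$, and e.g.\ if $d\rho_f = \tfrac12\delta_0 + \tfrac12\delta_M$ then $\|Af\|_{\cH}\|f\|_{\cH} = M/\sqrt{2}$ while $(Af,f)_{\cH}+\|f\|_{\cH}^2 = M/2 + 1$, so the left side exceeds the right once $M$ is large. The spectral Cauchy--Schwarz you append at the end, $\int \la^{1/2}\,d\rho_f \le (\int\la\,d\rho_f)^{1/2}(\int d\rho_f)^{1/2}$, is a true inequality but for the wrong quantity --- $\int\la^{1/2}\,d\rho_f$ does not occur anywhere in the chain --- and does not repair the estimate. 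The Cauchy--Schwarz you actually need, $(\int\la^2\,d\rho_f)^{1/2}(\int d\rho_f)^{1/2} \le \int(\la+1)\,d\rho_f$, is again the wrong direction and false. This is precisely the situation where a direct elementary estimate loses a factor, and a genuine interpolation input is required: the paper proves \eqref{3.8a} by first observing $\||B|(A+I_{\cH})^{-1}\|_{\cB(\cH)} \le a+b$ and $\|\ol{(A+I_{\cH})^{-1}|B|}\|_{\cB(\cH)} \le a+b$ and then invoking the complex interpolation argument in the proof of Reed--Simon Theorem X.18 (in the spirit of the Heinz inequality, cf.\ Theorem \ref{t3.4}) to pass to the midpoint $(A+I_{\cH})^{-1/2}|B|(A+I_{\cH})^{-1/2}$ with the same constant $a+b$. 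Your argument needs that step; as written, $(i)$ is not proved.

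Part $(ii)$ is fine modulo $(i)$: the closed graph theorem applied to $B^*(A-z I_{\cH})^{-1}$ gives $A$-boundedness of $B^*$ with constants $a^*,b^*$, and the factorization through the symmetric generalized polar decomposition $B = |B^*|^{1/2}U|B|^{1/2}$ of Theorem \ref{t2.7} with $\alpha = 1/2$ is exactly the paper's argument for \eqref{3.9}--\eqref{3.9b}. Your bookkeeping remarks about cores and closures at the end are appropriate; those are handled in the paper by working with the bounded operators $|B|^{1/2}(A+I_{\cH})^{-1/2}$ and $|B^*|^{1/2}(A+I_{\cH})^{-1/2}$ directly and using that the closure of the bounded everywhere-densely-defined composition equals the corresponding product of the bounded factors.
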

\begin{proof}
$(i)$ Equation \eqref{3.5} implies
\begin{equation}
\big\|B(A+I_{\cH})^{-1}\big\|_{\cB(\cH)} = \big\||B|(A+I_{\cH})^{-1}\big\|_{\cB(\cH)} \le a+b,   \lb{3.10}
\end{equation}
using the polar decomposition $B=U|B|$ of $B$ (cf.\ \eqref{2.20}). Since also
\begin{equation}
\big\|(\ol{A+I_{\cH})^{-1}|B|}\big\|_{\cB(\cH)} = \big\|\big[|B|(A+I_{\cH})^{-1}\big]^*\big\|_{\cB(\cH)} \le a+b,    \lb{3.11}
\end{equation}
the proof of Theorem\ X.18 in \cite{RS75} yields that
\begin{equation}
\big\|\ol{(A+I_{\cH})^{-1/2}|B|(A+I_{\cH})^{-1/2}}\big\|_{\cB(\cH)}
= \big\|\big[|B|^{1/2}(A+I_{\cH})^{-1/2}\big]^*|B|^{1/2}(A+I_{\cH})^{-1/2}\big\|_{\cB(\cH)} \le a+b,    \lb{3.12}
\end{equation}
using complex interpolation. Indeed, one considers $C^\infty(A) =\bigcap_{n\in\bbN} \dom(A^n)$, introduces $\cH_p(A)$, $p\in\bbR$, as the completion of $C^\infty(A)$ with respect to the norm
$\|f\|_p=\|(A+I_{\cH})^{p/2} f\|_{\cH}$, $f\in C^\infty(A)$. Then $\cH^*_p=\cH_{-p}$, $p\in\bbR$. Given $p_0, p_1 \in\bbR$ and $p_t=t p_0 + (1-t) p_1$, $t\in [0,1]$, one can prove that $\cH_{p_t}$ are interpolating spaces between $\cH_{p_0}$ and $\cH_{p_1}$. Given   $m,n \in \bbN$, an operator $C\colon C^\infty(A) \to \cH$ extends to a bounded operator from $\cH_m$ to $\cH_{-n}$, if and only if
 \begin{equation}
 (A+I_{\cH})^{-n/2} C (A+I_{\cH})^{-m/2} \in\cB(\cH).   \lb{3.12a}
 \end{equation}
 The case at hand in \eqref{3.12} then alludes to the special situation $m=n=1$ in 
 \eqref{3.12a}.

Since
\begin{equation}
\|T^*T\|_{\cB(\cH)}=\|TT^*\|_{\cB(\cH)}=\|T\|^2_{\cB(\cH)}=\|T^*\|^2_{\cB(\cH)} \,
\text{  for all $T\in\cB(\cH)$,}
\end{equation}
\eqref{3.12} yields the estimate \eqref{3.8a}.

To prove the remaining assertions in item $(i)$ one substitutes $(A-\mu)^{-1}f$ in place of $f$ in \eqref{3.5} and obtains 
\begin{equation}\label{3.19a}
\|B(A+\mu)^{-1}f\|_{\cH} \le a\|A(A+\mu)^{-1}f\|_{\cH} + b \|(A+\mu)^{-1}f\|_{\cH}
\le(a+b/\mu)\|f\|_{\cH}, \quad f \in \dom(A), \; \mu>0,  
\end{equation}
and hence,  
\begin{equation}
\|Bf\|_{\cH} \leq [a+(b/\mu)] \|Af\|_{\cH} + (a\mu + b)\|f\|_{\cH}, \quad f \in \dom(A), \;
 \mu>0. 
\end{equation}
Similarly, the inequality
\begin{equation}
\big\||B|^{1/2}(A+\mu I_{\cH})^{-1/2}\big\|_{\cB(\cH)} \leq [a+(b/\mu)]^{1/2}
\end{equation}
which follows from \eqref{3.19a} in the same manner as
\eqref{3.8a} follows from \eqref{3.4} (i.e., by the same interpolation argument), implies
\begin{equation}
\big\||B|^{1/2}f\|_{\cH} \leq [a+(b/\mu)]^{1/2} \big\||A|^{1/2}f\big\|_{\cH} 
+ (a\mu + b)^{1/2}\|f\|_{\cH}, \quad f \in \dom\big(|A|^{1/2}\big), \; \mu>0.
\end{equation} 
$(ii)$ By symmetry of our hypotheses one obtains \eqref{3.8b}. 

Next, using the generalized polar decomposition $B = |B^*|^{1/2} U |B|^{1/2}$ (cf.\ 
\eqref{3.1}), one thus obtains from \eqref{3.8a} and \eqref{3.8b} that
\begin{align}
\big\|\ol{(A+I_{\cH})^{-1/2}B(A+I_{\cH})^{-1/2}}\big\|_{\cB(\cH)}
&= \big\|\ol{(A+I_{\cH})^{-1/2}|B^*|^{1/2} U |B|^{1/2}(A+I_{\cH})^{-1/2}}\big\|_{\cB(\cH)}  \no \\
&= \big\|\big[|B^*|^{1/2}(A+I_{\cH})^{-1/2}\big]^* U |B|^{1/2}(A+I_{\cH})^{-1/2}\big\|_{\cB(\cH)} \no \\
& \le (a^* + b^*)^{1/2} \|U\|_{\cB(\cH)} (a+b)^{1/2}  \no \\
& \le (a^* + b^*)^{1/2}(a+b)^{1/2}.   \lb{3.15}
\end{align}
Since
\begin{equation}
\Big[\,\ol{(A+I_{\cH})^{-1/2}B(A+I_{\cH})^{-1/2}}\,\Big]^*
= \ol{(A+I_{\cH})^{-1/2}B^*(A+I_{\cH})^{-1/2}},    \lb{3.16}
\end{equation}
this completes the proof of \eqref{3.9}--\eqref{3.9b}.
\end{proof}

Theorem \ref{t3.2} extends Theorem\ 1.38 in \cite[Sect.\ VI.1.7]{Ka80} and Theorem\ X.18 in \cite{RS75} since $B$ is not assumed to be self-adjoint or symmetric.

We note in connection with the hypotheses in Theorem \ref{t3.2}, that if $B$ is $A$-bounded, then $B^*$ need not be $A^*$-bounded nor $A$-bounded (we recall that $A^*=A\ge 0$ in our present case). Indeed, the following simple example illustrates this point:

\begin{example} \lb{e3.3}
Consider the densely defined closed operators in $L^2((0,1); dx)$:
\begin{align}
\begin{split}
& T_{min} = -\f{d^2}{dx^2}, \quad
\dom(T_{min}) = \{g \in L^2((0,1); dx)\,|\, g, g' \in AC([0,1]); \\
& \hspace*{3cm} g(0)=g'(0)=g(1)=g'(1)=0; \, g'' \in L^2((0,1); dx)\} \\
& T_{F} = -\f{d^2}{dx^2}, \quad
\dom(T_{F}) = \{g \in L^2((0,1); dx)\,|\, g, g' \in AC([0,1]); \\
& \hspace*{3cm} g(0)=g(1)=0; \, g'' \in L^2((0,1); dx)\}    \lb{3.17} \\
& T_{max} = -\f{d^2}{dx^2}, \quad
\dom(T_{max}) = \{g \in L^2((0,1); dx)\,|\, g, g' \in AC([0,1]); \, g'' \in L^2((0,1); dx)\},
\end{split}
\end{align}
where $T_{F}^*=T_F > 0$ is the Friedrichs extension of the minimal operator $T_{min}$. $($Here
$AC(\cI)$ denotes the set of absolutely continuous functions on the interval $\cI\subset \bbR$.$)$ Then the maximal operator $T_{max}$ is $T_F$-bounded since $\dom(T_{max}) \supset \dom(T_F)$ and both operators are closed $($cf.\  \cite[Remark IV.1.5]{Ka80}$)$, but $T_{max}^*=T_{min}$ is not $T_F$-bounded since
$\dom(T_{min})$ is strictly contained in $\dom(T_F)$.
\end{example}

Before we turn to relatively (form) compact perturbations, we recall a useful interpolation result:

\begin{theorem}  $($\cite[Theorem\ IV.1.11]{KPS82}$)$  \lb{t3.4}
Suppose $A \ge I_{\cH}$ and $B \ge I_{\cH}$ are self-adjoint operators in $\cH$ with
$\dom(B) \supseteq \dom(A)$. If
\begin{equation}
\|Bf\|_{\cH} \le \|Af\|_{\cH} \, \text{ for all $f \in \dom(A)$},   \lb{3.18}
\end{equation}
then for all $\alpha\in [0,1]$, one has
\begin{equation}
\big\|B^\alpha f\big\|_{\cH} \le \big\|A^\alpha f\big\|_{\cH} \, \text{ for all $f\in\dom\big(A^\alpha\big)$.}
\lb{3.19}
\end{equation}
\end{theorem}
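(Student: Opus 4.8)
The plan is to reduce the asserted norm inequality to the single operator estimate $\|B^\alpha A^{-\alpha}\|_{\cB(\cH)}\le 1$ for $\alpha\in[0,1]$, and to establish that estimate by complex interpolation (Hadamard's three-lines theorem). (Alternatively, one recognizes \eqref{3.18}--\eqref{3.19} as a special case of the L\"owner--Heinz inequality $0\le S\le T\Rightarrow S^\alpha\le T^\alpha$, $\alpha\in[0,1]$, applied with $S=B^2\le A^2=T$; but I would give a self-contained argument.) Since $A,B\ge I_{\cH}$, both are boundedly invertible with $\|A^{-1}\|_{\cB(\cH)},\|B^{-1}\|_{\cB(\cH)}\le 1$ and $\ran(A^{-1})=\dom(A)\subseteq\dom(B)$, so $BA^{-1}$ is everywhere defined; substituting $f=A^{-1}g$ in \eqref{3.18} shows that \eqref{3.18} is equivalent to $\|BA^{-1}\|_{\cB(\cH)}\le 1$. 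It therefore suffices to prove that $A^{-\alpha}$ maps $\cH$ into $\dom(B^\alpha)$ with $\|B^\alpha A^{-\alpha}\|_{\cB(\cH)}\le 1$ for $\alpha\in(0,1)$: given $f\in\dom(A^\alpha)$ one then writes $f=A^{-\alpha}g$, $g=A^\alpha f$, and reads off $f\in\dom(B^\alpha)$ together with $\|B^\alpha f\|_{\cH}\le\|g\|_{\cH}=\|A^\alpha f\|_{\cH}$, which is \eqref{3.19}.

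To handle the unboundedness I would first truncate $B$. Put $g_n(\lambda)=\min(\lambda,n)$ and $B_n:=g_n(B)\in\cB(\cH)$, so $I_{\cH}\le B_n$, $\log B_n$ is bounded self-adjoint with spectrum in $[0,\ln n]$, and $z\mapsto B_n^z:=e^{z\log B_n}$ is entire in $\cB(\cH)$ with $\|B_n^z\|_{\cB(\cH)}=n^{\Re z}$ for $\Re z\ge 0$. Since $g_n(\lambda)^2\le\lambda^2$ for $\lambda\ge 1$, a spectral-measure computation gives $\|B_nA^{-1}\|_{\cB(\cH)}\le\|BA^{-1}\|_{\cB(\cH)}\le 1$. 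Next, for fixed $u,v\in\cH$ consider the scalar function
\[
\phi_n(z):=\big(B_n^z A^{-z}u,v\big)_{\cH},\qquad z\in\overline{S}:=\{z\in\bbC\,|\,0\le\Re z\le 1\},
\]
where $A^{-z}=\int_{[1,\infty)}\lambda^{-z}\,dE_A(\lambda)$ satisfies $\|A^{-z}\|_{\cB(\cH)}\le 1$ for $\Re z\ge 0$. One checks that $\phi_n$ is holomorphic in the open strip and bounded (by $n\|u\|_{\cH}\|v\|_{\cH}$) and continuous on $\overline{S}$. On the left edge $\|B_n^{iy}A^{-iy}\|_{\cB(\cH)}=1$ as a product of two unitaries; on the right edge $B_n^{1+iy}A^{-1-iy}=B_n^{iy}(B_nA^{-1})A^{-iy}$ gives $\|B_n^{1+iy}A^{-1-iy}\|_{\cB(\cH)}\le\|B_nA^{-1}\|_{\cB(\cH)}\le 1$. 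Hence $|\phi_n|\le\|u\|_{\cH}\|v\|_{\cH}$ on both edges, and Hadamard's three-lines theorem yields $|\phi_n(z)|\le\|u\|_{\cH}\|v\|_{\cH}$ throughout $\overline{S}$; since $u,v$ are arbitrary, $\|B_n^\alpha A^{-\alpha}\|_{\cB(\cH)}\le 1$ for all $\alpha\in[0,1]$ and all $n$.

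Finally I would remove the truncation. For any $h\in\cH$, since $g_n(\lambda)^\alpha\uparrow\lambda^\alpha$ pointwise on $[1,\infty)$, monotone convergence gives $\|B_n^\alpha h\|_{\cH}^2=\int_{[1,\infty)}g_n(\lambda)^{2\alpha}\,d\|E_B(\lambda)h\|_{\cH}^2\uparrow\int_{[1,\infty)}\lambda^{2\alpha}\,d\|E_B(\lambda)h\|_{\cH}^2$. Applying the previous step with $h=f\in\dom(A^\alpha)$ and $g=A^\alpha f$ gives $\|B_n^\alpha f\|_{\cH}=\|B_n^\alpha A^{-\alpha}g\|_{\cH}\le\|A^\alpha f\|_{\cH}$ for every $n$, so the monotone limit obeys $\int_{[1,\infty)}\lambda^{2\alpha}\,d\|E_B(\lambda)f\|_{\cH}^2\le\|A^\alpha f\|_{\cH}^2<\infty$; that is, $f\in\dom(B^\alpha)$ and $\|B^\alpha f\|_{\cH}\le\|A^\alpha f\|_{\cH}$, which is \eqref{3.19}. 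The main obstacle is the three-lines step, and within it the regularity of $z\mapsto A^{-z}$ up to the boundary line $\Re z=0$: this map is in general only strongly, not norm, continuous there, which is precisely why one argues with the scalar functions $\phi_n$ (using that $z\mapsto A^{-z}u$ is $\cH$-norm continuous on $\{\Re z\ge 0\}$ by dominated convergence) rather than with an operator-valued three-lines theorem; the truncation $B\rightsquigarrow B_n$ is what makes $B_n^z$ entire and $\phi_n$ bounded on the strip so that Hadamard's theorem applies, after which the monotone-convergence passage is routine.
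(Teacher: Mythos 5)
The paper does not prove Theorem \ref{t3.4}; it simply quotes it from \cite[Theorem IV.1.11]{KPS82} (this is the classical Heinz inequality), so there is no in-text argument to compare yours against. Your complex-interpolation proof is correct and complete: the reduction of \eqref{3.18} and \eqref{3.19} to the single estimate $\|B^\alpha A^{-\alpha}\|_{\cB(\cH)}\le 1$, the truncation $B_n=g_n(B)$ which makes $z\mapsto B_n^z$ entire and $\phi_n(z)=(B_n^zA^{-z}u,v)_{\cH}$ bounded and continuous on the closed strip, the edge bounds (unitarity of $B_n^{iy}$ and $A^{-iy}$ on $\Re z=0$, and $B_n^{1+iy}A^{-1-iy}=B_n^{iy}\big(B_nA^{-1}\big)A^{-iy}$ with $\|B_nA^{-1}\|_{\cB(\cH)}\le\|BA^{-1}\|_{\cB(\cH)}\le 1$ on $\Re z=1$), the three-lines theorem, and the removal of the truncation by monotone convergence all go through; you also correctly flag the one delicate point, namely that $z\mapsto A^{-z}$ is only strongly (not norm) continuous at $\Re z=0$, which is precisely why one passes to the scalar functions $\phi_n$. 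This is the standard interpolation-theoretic route to the Heinz inequality, entirely in the spirit of the cited reference, and of a piece with the complex-interpolation devices (via \cite[Theorem X.18]{RS75} and the Lemma on p.\ 115 of \cite{RS78}) that the paper itself invokes in the proofs of Theorems \ref{t3.2} and \ref{t3.5}.
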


\begin{theorem} \lb{t3.5}
Assume that $A\ge 0$ is self-adjoint in $\cH$. \\
$(i)$ Let $B$ be a densely defined, closed operator in $\cH$ and suppose that
$\dom(B)\supseteq\dom(A)$. In addition, assume that $B$ is $A$-compact. Then $B$ is also $A$-form compact,
\begin{equation}
|B|^{1/2} (A+I_{\cH})^{-1/2} \in\cB_\infty (\cH).     \lb{3.22}
\end{equation}
$(ii)$ Suppose that $B$ is densely defined and closed in $\cH$ and that
$\dom(B)\cap\dom(B^*)\supseteq \dom(A)$. In addition, assume that $B$ or $B^*$ is 
$A$-compact. Then 
\begin{equation}
 \ol{(A+I_{\cH})^{-1/2} B (A+I_{\cH})^{-1/2}}, \,\,
  \ol{(A+I_{\cH})^{-1/2} B^* (A+I_{\cH})^{-1/2}}
  \in \cB_\infty (\cH).     \lb{3.23}
\end{equation}
\end{theorem}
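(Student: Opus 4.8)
The plan is to reduce part $(i)$ to the compactness of $\ol{(A+I_{\cH})^{-1/2}|B|(A+I_{\cH})^{-1/2}}$, which one obtains by the complex interpolation argument already used in the proof of Theorem \ref{t3.2}, now retaining compactness; part $(ii)$ then follows from the generalized polar decomposition $B = |B^*|^{1/2}U|B|^{1/2}$ exactly as in the proof of Theorem \ref{t3.2}\,$(ii)$.

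$(i)$ First one reduces to $|B|$ in place of $B$: from the polar decomposition $B=U|B|$ one has $|B|=U^*B$ on $\dom(B)=\dom(|B|)$, hence $|B|(A+I_{\cH})^{-1}=U^*\,[B(A+I_{\cH})^{-1}]\in\cB_\infty(\cH)$, i.e., $|B|$ is $A$-compact as well. By Theorem \ref{t3.2}\,$(i)$, $C:=|B|^{1/2}(A+I_{\cH})^{-1/2}\in\cB(\cH)$, and by \eqref{3.12},
\[
C^*C=\ol{(A+I_{\cH})^{-1/2}\,|B|\,(A+I_{\cH})^{-1/2}}.
\]
Since the nonnegative square root of a nonnegative compact operator is again compact, and $C=W|C|=W(C^*C)^{1/2}$ with $W$ the partial isometry from the polar decomposition of the bounded operator $C$, it suffices to show that $C^*C\in\cB_\infty(\cH)$.

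To do so, one runs the interpolation scheme from the proof of Theorem \ref{t3.2} in the scale $\{\cH_p(A)\}_{p\in\bbR}$. Put $K:=|B|(A+I_{\cH})^{-1}\in\cB_\infty(\cH)$; then also $K^*=\ol{(A+I_{\cH})^{-1}|B|}\in\cB_\infty(\cH)$. In the language of \eqref{3.12a}, $K\in\cB_\infty(\cH)$ says that $|B|$ extends to a compact operator $\cH_2(A)\to\cH_0(A)$, and $K^*\in\cB_\infty(\cH)$ says that $|B|$ extends to a compact operator $\cH_0(A)\to\cH_{-2}(A)$. Since $\cH_1(A)$ and $\cH_{-1}(A)$ are the complex interpolation spaces halfway between $\cH_2(A),\cH_0(A)$ and between $\cH_0(A),\cH_{-2}(A)$, respectively, and since compactness is preserved under complex interpolation of Hilbert couples, $|B|$ extends to a compact operator $\cH_1(A)\to\cH_{-1}(A)$; by \eqref{3.12a} with $m=n=1$ this means precisely $\ol{(A+I_{\cH})^{-1/2}|B|(A+I_{\cH})^{-1/2}}=C^*C\in\cB_\infty(\cH)$, proving \eqref{3.22}.

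$(ii)$ By Theorem \ref{t3.2}\,$(ii)$, $|B|^{1/2}(A+I_{\cH})^{-1/2}$ and $|B^*|^{1/2}(A+I_{\cH})^{-1/2}$ belong to $\cB(\cH)$, and the generalized polar decomposition $B=|B^*|^{1/2}U|B|^{1/2}$ (cf.\ \eqref{3.1}) gives, exactly as in \eqref{3.15},
\[
\ol{(A+I_{\cH})^{-1/2}B(A+I_{\cH})^{-1/2}}=\big[|B^*|^{1/2}(A+I_{\cH})^{-1/2}\big]^*\,U\,|B|^{1/2}(A+I_{\cH})^{-1/2}.
\]
If $B$ is $A$-compact, part $(i)$ yields $|B|^{1/2}(A+I_{\cH})^{-1/2}\in\cB_\infty(\cH)$; if instead $B^*$ is $A$-compact, applying part $(i)$ to the densely defined, closed operator $B^*$ (note $\dom(B^*)\supseteq\dom(A)$ by hypothesis) yields $|B^*|^{1/2}(A+I_{\cH})^{-1/2}\in\cB_\infty(\cH)$. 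Since $\cB_\infty(\cH)$ is a two-sided ideal in $\cB(\cH)$, in either case the operator displayed above lies in $\cB_\infty(\cH)$; its adjoint $\ol{(A+I_{\cH})^{-1/2}B^*(A+I_{\cH})^{-1/2}}$ (cf.\ \eqref{3.16}) then lies in $\cB_\infty(\cH)$ as well, which is \eqref{3.23}.

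The one genuinely non-formal ingredient is the assertion that complex interpolation preserves compactness. In contrast to boundedness this is subtle in general, and a naive energy truncation does not work, since $\big\|\ol{(A+I_{\cH})^{-1/2}|B|(A+I_{\cH})^{-1/2}}\,E_A((n,\infty))\big\|\to 0$ is itself equivalent to the compactness one is trying to prove. However, the result is classical for Hilbert couples, which is the only case needed here; alternatively it can be extracted by applying the Hadamard three-lines argument directly to the entire, strip-bounded, $\cB(\cH)$-valued function $z\mapsto(A+I_{\cH})^{-z}|B|(A+I_{\cH})^{z-1}$, whose boundary values on $\Re z=0$ and $\Re z=1$ are unitarily equivalent to $K$ and $K^*$ and hence compact.
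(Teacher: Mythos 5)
Your argument is correct and is, in substance, the paper's own proof: both reduce \eqref{3.22} to the compactness of $\ol{(A+I_{\cH})^{-1/2}|B|(A+I_{\cH})^{-1/2}}$ and obtain it by complex interpolation in the scale $\cH_p(A)$ --- the paper citing the Lemma on p.\ 115 of \cite{RS78}, you invoking the equivalent fact that interpolation of Hilbert couples preserves compactness --- and part $(ii)$ is handled identically via the generalized polar decomposition \eqref{3.1} and the ideal property of $\cB_\infty(\cH)$.

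One caution about the ``alternative'' you offer at the end: a bare Hadamard three-lines argument applied to $z\mapsto(A+I_{\cH})^{-z}|B|(A+I_{\cH})^{z-1}$ does not, by itself, yield compactness in the interior of the strip; three-lines controls only norms, and a bounded analytic $\cB(\cH)$-valued function on a strip with compact boundary values need not be compact off the boundary. The mechanism actually underlying the \cite{RS78} Lemma (and the Hilbert-couple theorem you cite) is to run three-lines on the truncated family $K_z(I_{\cH}-E_A([0,n]))$: since the spectral projection $E_A([0,n])$ commutes with $(A+I_{\cH})^{\pm z}$, the boundary norms reduce to $\|K(I_{\cH}-E_A([0,n]))\|_{\cB(\cH)}$ and $\|K^*(I_{\cH}-E_A([0,n]))\|_{\cB(\cH)}$, both $\to 0$ as $n\to\infty$ by compactness of $K$, $K^*$, while $K_{1/2}E_A([0,n])=(A+I_{\cH})^{-1/2}K\,(A+I_{\cH})^{1/2}E_A([0,n])$ is manifestly compact, so $K_{1/2}$ is a norm limit of compacts.
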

\begin{proof}
$(i)$ An elementary computation shows that \eqref{3.6} implies
\begin{equation}
\|Bf\|_{\cH}^2 = \|B^*f\|_{\cH}^2 = \big\|\big({\wti a}^2 A^2 + {\wti b}^2 I_{\cH}\big)^{1/2}f\big\|^2_{\cH},
\quad f\in\dom(A).    \lb{3.24}
\end{equation}
Replacing $|B|$ by $|B|+I_{\cH}$ and $A$ by $A+I_{\cH}$, Theorem \ref{t3.4} implies
\begin{equation}
\dom\big(|B|^\alpha\big) \supseteq \dom\big(A^\alpha\big), \quad \alpha \in [0,1].    
\lb{3.25}
\end{equation}
As a result, terms of the type
\begin{align}
\ol{(A+I_{\cH})^{-z} |B| (A+I_{\cH})^{-1+z}}
&= \ol{\big[(A+I_{\cH})^{-z}|B|^{z} \big]} \big[|B|^{1-z} (A+I_{\cH})^{-1+z} \big]  \no \\
& =  \big[|B|^{\ol z} (A+I_{\cH})^{-\ol z}\big]^* \big[|B|^{1-z} (A+I_{\cH})^{-1+z} \big],   \lb{3.26} \\
& \hspace*{3.55cm}  z\in\bbC, \; \Re(z) \in [0,1],   \no
\end{align}
are well-defined as operators in $\cB(\cH)$.

Next we allude to the complex interpolation proof of the Lemma on p.\ 115 in \cite{RS78}. The proof of this lemma and equation \eqref{3.26} yield that
\begin{equation}
\ol{(A+I_{\cH})^{-z} |B| (A+I_{\cH})^{-1+z}} \in \cB_\infty (\cH) \, \text{ for all $z\in\bbC$, $\Re (z) \in (0,1)$.}
\lb{3.27}
\end{equation}
Taking $z=1/2$ in \eqref{3.27} one concludes
\begin{equation}
\ol{(A+I_{\cH})^{-1/2} |B| (A+I_{\cH})^{-1/2}} \in \cB_\infty (\cH),
\lb{3.28}
\end{equation}
and the latter is then equivalent to
\begin{equation}
 |B|^{1/2} (A+I_{\cH})^{-1/2} \in \cB_\infty (\cH)   \lb{3.29}
\end{equation}
(since $T^*T\in\cB_\infty(\cH)$ is equivalent to $T\in\cB_\infty(\cH)$). 

$(ii)$ Using again the generalized polar decomposition \eqref{3.1} of $B$, 
$B=|B^*|^{1/2} U |B|^{1/2}$, one obtains
\begin{align}
& \ol{(A+I_{\cH})^{-1/2} B (A+I_{\cH})^{-1/2}}
= \ol{\big[(A+I_{\cH})^{-1/2}|B^*|^{1/2}\big]} U \big[ |B|^{1/2} (A+I_{\cH})^{-1/2}\big]
\no \\
& \quad =  \big[ |B^*|^{1/2} (A+I_{\cH})^{-1/2}\big]^*  U
\big[ |B|^{1/2} (A+I_{\cH})^{-1/2}\big] \in \cB_\infty (\cH),    \lb{3.30}
\end{align}
since both square brackets in the last equality in \eqref{3.30} are bounded operators and by hypothesis at least one of them is compact. Employing \eqref{3.16} again completes the proof of \eqref{3.23}.
\end{proof}

Equation \eqref{3.23} extends \cite[Problem\ 73\,(a), p.\ 373]{RS78}, since $B$ is not assumed to be symmetric.

In a completely analogous manner one proves membership of  the operators in \eqref{3.23} in the Schatten--von Neumann classes $\cB_p(\cH)$, $p\ge 1$; we omit further details.

\begin{remark} \lb{r3.6}
We conclude this section by recalling a well-known example, where $A$ and $B$ are self-adjoint,
$B$ is $A$-form bounded and even $A$-form compact, but $B$ is not $A$-bounded (let alone $A$-compact): Denote by $\cR$ the class of Rollnik potentials in $\bbR^3$, that is,
\begin{equation}
\cR = \bigg\{V\colon\bbR^3 \to \bbC \,\bigg|
\int_{\bbR^6} d^3x \, d^3y \, \f{|V(x)| |V(y)|}{|x-y|^2} < \infty\bigg\},
\end{equation}
and by $H_0$ the $L^2(\bbR;d^3x)$-realization of (minus) the Laplacian $-\Delta$ defined on the Sobolev space $H^{2,2}(\bbR^3)$. Then there exist potentials $0 \leq V_0 \in L^1(\bbR^3;d^3x)\cap\cR$
such that
\begin{equation}
V_0^{1/2} \big(H_0 + I_{L^2(\bbR^3;d^3x)}\big)^{-1/2} 
\in \cB_4\big(L^2(\bbR^3;d^3x)\big),
\end{equation}
(cf.\ Simon \cite{Si71}, Theorem I.22 and Example 4 in Sect.\ I.6) and hence $V_0$ is $H_0$-form compact, but
\begin{equation}
\dom(V_0) \cap \dom(H_0) =\{0\},
\end{equation}
and thus $V_0$ is not $H_0$-bounded.
\end{remark}

\section{Some Applications to Maximally Sectorial Operators} \lb{s4}

In this section we relax the condition that $A$ is self-adjoint and study maximally sectorial operators
$A$ instead.

We recall that $A$ is called {\it accretive} if the numerical range of $A$ (i.e., the set 
$\{(f,Af)_{\cH}\in\bbC \,|\, f \in \dom(A), \, \|f\|_{\cH}=1\}$) is a subset of the closed right complex half-plane. $A$ is called {\it $m$-accretive} if $A$ is a closed and maximal accretive operator (i.e., $A$ has no proper accretive extension). Moreover,  $A$ is called an {\it $m$-sectorial} operator with a {\it vertex} $0$ and a corresponding {\it semi-angle} $\theta\in [0,\pi/2)$ if $A$ is a maximal accretive, closed (and hence densely defined) operator, and the numerical range of $A$ is contained in a sector $|\arg(z)|\le \theta < (\pi/2)$ in the complex $z$-plane.

We also recall that an equivalent definition of an $m$-accretive operator $A$ in $\cH$ is
\begin{equation}
(A+\zeta I_{\cH})^{-1} \in \cB(\cH), \quad
\|(A+\zeta I_{\cH})^{-1}\| \leq \f{1}{\Re(\zeta)}, \quad \Re(\zeta) > 0.   \lb{4.1}
\end{equation}

With $A$ assumed to be $m$-sectorial, one associates the quadratic form
\begin{equation}
\gt'_A [f,g]= (f,Ag)_{\cH}, \quad f, g \in \dom ({\gt}'_A) =\dom (A).
\end{equation}
The form $\gt'_A$ is closable (cf.\ \cite[Theorem VI.1.27]{Ka80})
and according to the first representation theorem
(see, e.g., \cite[Theorem VI.2.1]{Ka80}), $A$ is associated with its closure
$\gt_A = \overline{\gt'_A}$, that is,
\begin{equation}
\dom(A)\subseteq \dom(\gt_A) \, \text{ and } \,
\gt_A [f,g] = (f,Ag)_{\cH}, \quad   f\in\dom(\gt_A), \, g\in\dom(A).
\end{equation}

Denoting by $\gt^*$ the adjoint form of a sesqulinear form $\gt$ in $\cH$,
\begin{equation}
t^*[f,g] = \ol{\gt[g,f]}, \quad f,g \in \dom(\gt^*) =\dom(\gt),
\end{equation}
the form $\gt_{A_{\gR}} = (\gt_A + \gt_{A}^*)/2$ is
closed and nonnegative on $\dom(\gt_{A_{\gR}}) = \dom(\gt_A)$. We denote by $A_{\gR}\ge 0$ the self-adjoint operator uniquely associated with $\gt_{A_{\gR}}$, that is,
\begin{equation}
\dom(A_{A_{\gR}})\subseteq \dom(\gt_A) \, \text{ and } \,
\gt_{A_{\gR}}[f,g]=(f,A_\gR g)_{\cH}, \quad  f\in\dom(\gt_A), \, g\in\dom(A_{\gR}).
\end{equation}
By the second representation theorem (cf.\ \cite[Theorem VI.2.23]{Ka80})
\begin{equation}
\gt_{A_{\gR}}[f,g] = \big(A^{1/2}_{\gR} f, A^{1/2}_{\gR} g\big)_{\cH}, \quad
f,g\in\dom(\gt_{A_{\gR}}) = \dom \big(A^{1/2}_{\gR}\big).
\end{equation}
We denote by $A^{1/2}$ the unique $m$-sectorial square root of $A$, and recall that
\begin{equation}
(A^*)^{1/2} = \big(A^{1/2}\big)^*.
 \end{equation}
It should be emphasized that in general,
\begin{equation}
\dom \big(A^{1/2}\big) \ne \dom \big((A^*)^{1/2}\big).
 \end{equation}
However, if in fact, $\dom \big(A^{1/2}\big)=\dom \big((A^*)^{1/2}\big)$,
then one can obtain the analog of the second representation theorem for densely defined, closed, sectorial forms. For this purpose we next recall the following results:

\begin{theorem} \lb{t4.1} $($\cite[Theorem VI.3.2]{Ka80}$)$
Let $A$ be $m$-sectorial  in $\cH$ with a vertex $0$ and semi-angle $\theta\in[0,\pi/2)$.
Then $A_{\gR} \ge 0$ and there exists a bounded self-adjoint operator $X\in\cB(\cH)$
such that $\|X\|_{\cB(\cH)}\le \tan(\theta)$ and
\begin{equation}\label{4.8}
A=A^{1/2}_{\gR}(I_{\cH}+i X)A^{1/2}_{\gR},  \quad A^* = A^{1/2}_{\gR} (I_{\cH} - i X)A^{1/2}_{\gR}.
\end{equation}
\end{theorem}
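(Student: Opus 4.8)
The final statement is Theorem~\ref{t4.1}, which is attributed to Kato \cite[Theorem VI.3.2]{Ka80}, so the plan is essentially to recall Kato's argument, now having available the generalized polar decomposition results proved earlier in this paper. The starting point is the first representation theorem: since $A$ is $m$-sectorial with vertex $0$ and semi-angle $\theta$, its numerical range lies in the sector $|\arg(z)|\le\theta$, hence the real part form $\gt_{A_{\gR}}=(\gt_A+\gt_A^*)/2$ is a nonnegative closed form with $\dom(\gt_{A_{\gR}})=\dom(\gt_A)$, giving the self-adjoint operator $A_{\gR}\ge 0$. The sectoriality estimate $|\Im\gt_A[f,f]|\le\tan(\theta)\,\Re\gt_A[f,f]=\tan(\theta)\,\|A_{\gR}^{1/2}f\|_{\cH}^2$ for all $f\in\dom(\gt_A)=\dom(A_{\gR}^{1/2})$ is the key inequality.

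First I would introduce the imaginary-part form $\gt_{A_{\gI}}=(\gt_A-\gt_A^*)/(2i)$, which is a symmetric (real-valued) form on $\dom(\gt_A)=\dom(A_{\gR}^{1/2})$ satisfying $|\gt_{A_{\gI}}[f,f]|\le\tan(\theta)\|A_{\gR}^{1/2}f\|_{\cH}^2$. By the representation theorem for bounded forms relative to $A_{\gR}^{1/2}$ (equivalently, by transporting the form to $\ran(A_{\gR}^{1/2})^{-}$ via $A_{\gR}^{1/2}$ and using the Lax--Milgram/Riesz representation for the resulting bounded sesquilinear form on that subspace, then extending by zero on the orthogonal complement $\ker(A_{\gR})$), there is a bounded self-adjoint $X\in\cB(\cH)$ with $\|X\|_{\cB(\cH)}\le\tan(\theta)$ such that
\begin{equation}
\gt_{A_{\gI}}[f,g] = \big(A_{\gR}^{1/2}f,\,X\,A_{\gR}^{1/2}g\big)_{\cH}, \quad f,g\in\dom(A_{\gR}^{1/2}).
\end{equation}
Consequently $\gt_A[f,g]=\gt_{A_{\gR}}[f,g]+i\,\gt_{A_{\gI}}[f,g]=\big(A_{\gR}^{1/2}f,(I_{\cH}+iX)A_{\gR}^{1/2}g\big)_{\cH}$ for all $f,g\in\dom(\gt_A)$.

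Next I would identify the operator generated by the right-hand side with $A$ itself. The form $[f,g]\mapsto\big(A_{\gR}^{1/2}f,(I_{\cH}+iX)A_{\gR}^{1/2}g\big)_{\cH}$ is a closed sectorial form on $\dom(A_{\gR}^{1/2})$, and the $m$-sectorial operator it generates via the first representation theorem has domain $\{g\in\dom(A_{\gR}^{1/2}) : (I_{\cH}+iX)A_{\gR}^{1/2}g\in\dom((A_{\gR}^{1/2})^*)=\dom(A_{\gR}^{1/2})\}$ and action $g\mapsto A_{\gR}^{1/2}(I_{\cH}+iX)A_{\gR}^{1/2}g$; since this form coincides with $\gt_A$ and $A$ is the unique $m$-sectorial operator associated with $\gt_A$, the first equation in \eqref{4.8} follows, together with the implicit assertion that $\dom(A)=\{g\in\dom(A_{\gR}^{1/2}) : (I_{\cH}+iX)A_{\gR}^{1/2}g\in\dom(A_{\gR}^{1/2})\}$. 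Taking adjoints of the form (which replaces $X$ by $-X$, as $\gt_A^*$ has imaginary part form $-\gt_{A_{\gI}}$) yields $A^*=A_{\gR}^{1/2}(I_{\cH}-iX)A_{\gR}^{1/2}$, giving the second equation in \eqref{4.8}.

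The main obstacle is the second step: rigorously producing the bounded self-adjoint $X$ and, more delicately, verifying that the product $A_{\gR}^{1/2}(I_{\cH}+iX)A_{\gR}^{1/2}$ really is the operator attached to the form rather than merely a restriction of it — i.e., that no domain is lost when one passes from the form expression to the operator expression. This is exactly where one must invoke the first representation theorem carefully (the domain of $A$ consists precisely of those $g\in\dom(\gt_A)$ for which $\gt_A[\cdot,g]$ is $\cH$-bounded), and where the fact that $A_{\gR}^{1/2}$ maps $\dom(A_{\gR}^{1/2})$ onto a dense subspace of $\ol{\ran(A_{\gR}^{1/2})}$ is used to transfer boundedness of the form back and forth through $A_{\gR}^{1/2}$. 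A secondary technical point is the behavior on $\ker(A_{\gR})$: one checks that $\ker(A_{\gR})\subseteq\ker(\gt_A)$ (by the sectoriality bound $|\gt_A[f,f]|\le(1+\tan(\theta))\|A_{\gR}^{1/2}f\|_{\cH}^2$), so $X$ may be defined arbitrarily there, e.g. as $0$, without affecting \eqref{4.8}.
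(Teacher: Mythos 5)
The paper states Theorem~\ref{t4.1} as a citation to \cite[Theorem VI.3.2]{Ka80} and gives no proof of its own, so there is nothing in the paper to compare your argument against; what you have done is reconstruct Kato's proof, and the reconstruction is correct. Your outline is precisely the standard route: introduce the imaginary-part form $\gt_{A_\gI}=(\gt_A-\gt_A^*)/(2i)$, transfer it through $A_\gR^{1/2}$ to a bounded Hermitian form on $\ol{\ran(A_\gR^{1/2})}$, obtain $X$ by Riesz representation, and then match operators (not merely forms) via the first representation theorem.

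One small point worth making explicit, since you only gesture at it in your final paragraph: the well-definedness of the transferred form $B[u,v]=\gt_{A_\gI}[f,g]$ with $u=A_\gR^{1/2}f$, $v=A_\gR^{1/2}g$ requires that $\gt_{A_\gI}[f,g]$ vanish whenever $f\in\ker(A_\gR^{1/2})$ for \emph{all} $g$, not just for $g=f$. Your observation that $\gt_A[f,f]=0$ on $\ker(A_\gR)$ gives only the diagonal; to kill the off-diagonal you need the generalized Cauchy--Schwarz inequality for sectorial forms, $|\gt_A[f,g]|^2\le(1+\tan^2\theta)\,\Re\gt_A[f,f]\,\Re\gt_A[g,g]$, which shows $\gt_A[f,\cdot]\equiv 0$ and hence $\gt_{A_\gI}[f,\cdot]\equiv 0$ when $\Re\gt_A[f,f]=0$. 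With this inserted, the argument is complete. A second, purely cosmetic, remark: the norm bound $\|X\|_{\cB(\cH)}\le\tan\theta$ does not follow directly from the polarization estimate (which degrades the constant); one should obtain boundedness first and then use the variational identity $\|X\|=\sup_{\|u\|=1}|(u,Xu)|$ for self-adjoint $X$ to recover the sharp constant from the diagonal bound.
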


\begin{lemma} \lb{l4.2} $($cf.\ \cite{Ka61}, \cite[Theorem VI.3.2]{Ka80} and \cite[Theorem IV.2.10]{EE89}$)$
Let $A$ be $m$-sectorial in $\cH$ with a vertex $0$ and assume that
 \begin{equation}\label{4.9}
\dom \big(A^{1/2}\big) = \dom \big((A^*)^{1/2}\big).
\end{equation}
Then the sesquilinear form
   \begin{equation}
\gt_A[f,g] = \big((A^*)^{1/2} f, A^{1/2} g\big)_{\cH}, \quad f,g
\in \dom \big(A^{1/2}\big) = \dom \big((A^*)^{1/2}\big),   \lb{4.10}
\end{equation}
is sectorial and closed. In particular,
\begin{equation}
\dom(\gt_A)=\dom \big(A^{1/2}\big)= \dom \big((A^*)^{1/2}\big)=\dom \big(A^{1/2}_{\gR}\big)
=\dom(\gt_{A_{\gR}}).      \lb{4.11}
\end{equation}
\end{lemma}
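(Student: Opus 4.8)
The plan is to exploit Theorem \ref{t4.1}. Under hypothesis \eqref{4.9}, we have $\dom\big(A^{1/2}\big) = \dom\big((A^*)^{1/2}\big)$, and the factorization $A = A^{1/2}_{\gR}(I_{\cH} + iX)A^{1/2}_{\gR}$ strongly suggests that $A^{1/2}$ and $A^{1/2}_{\gR}$ have the same domain. First I would record that $A^{1/2} = B_1 A^{1/2}_{\gR}$ for a suitable bounded operator $B_1$ (namely, a square root of $I_{\cH} + iX$ in the appropriate sense) — more precisely, from the representation $A = A^{1/2}_{\gR}(I_{\cH}+iX)A^{1/2}_{\gR}$ one deduces, using that $A^{1/2}$ is the unique $m$-sectorial square root, that $\dom\big(A^{1/2}\big) = \dom\big(A^{1/2}_{\gR}\big)$; similarly $\dom\big((A^*)^{1/2}\big) = \dom\big(A^{1/2}_{\gR}\big)$ from the second formula in \eqref{4.8}. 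This gives the chain of domain equalities \eqref{4.11} once we know $\dom(\gt_A) = \dom\big(A^{1/2}\big)$.

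Next I would turn to the form $\gt_A$ in \eqref{4.10}. That $\gt_A$ is sectorial is essentially immediate: for $f$ with $\|f\|_{\cH}=1$, writing $f \in \dom\big(A^{1/2}\big) = \dom\big(A^{1/2}_{\gR}\big)$ and using $A^{1/2} = B_1 A^{1/2}_{\gR}$, $(A^*)^{1/2} = B_1^* A^{1/2}_{\gR}$ (or the analogous bounded-operator relations coming from \eqref{4.8}), one computes $\gt_A[f,f] = \big(B_1^* A^{1/2}_{\gR} f, B_1 A^{1/2}_{\gR} f\big)_{\cH}$ and bounds its imaginary part against its real part by the norm estimate $\|X\|_{\cB(\cH)} \le \tan(\theta)$, exactly reproducing the semi-angle $\theta$. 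The subtle point is \emph{closedness}. Here I would argue that the form norm associated with $\gt_A$, namely $f \mapsto \big(|\gt_A[f,f]| + \|f\|_{\cH}^2\big)^{1/2}$, is equivalent to the graph norm $f \mapsto \big(\|A^{1/2}_{\gR} f\|_{\cH}^2 + \|f\|_{\cH}^2\big)^{1/2}$ of the self-adjoint operator $A^{1/2}_{\gR}$: the upper bound follows from boundedness of $B_1$, and the lower bound from $\Re\,\gt_A[f,f] = \gt_{A_{\gR}}[f,f] = \|A^{1/2}_{\gR} f\|_{\cH}^2$ (this is precisely the content of $A_{\gR}$ being the real part, cf.\ \eqref{4.8}). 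Since $\dom\big(A^{1/2}_{\gR}\big)$ is complete in its graph norm and $\dom(\gt_A) = \dom\big(A^{1/2}_{\gR}\big)$ under \eqref{4.9}, the equivalence of norms transfers completeness to $\gt_A$, i.e.\ $\gt_A$ is closed.

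Finally, \eqref{4.11} is assembled: $\dom(\gt_A) = \dom\big(A^{1/2}\big)$ is built into the definition \eqref{4.10}; $\dom\big(A^{1/2}\big) = \dom\big((A^*)^{1/2}\big)$ is hypothesis \eqref{4.9}; and $\dom\big(A^{1/2}\big) = \dom\big(A^{1/2}_{\gR}\big) = \dom(\gt_{A_{\gR}})$ follows from Theorem \ref{t4.1} and the second representation theorem recalled just above the statement. I expect the main obstacle to be the closedness assertion — one must be careful that the candidate form norm genuinely dominates the graph norm of $A^{1/2}_{\gR}$, which is where the identity $\Re\,\gt_A = \gt_{A_{\gR}}$ and hence the structure of \eqref{4.8} is essential; without \eqref{4.9} the form $\gt_A$ need not even be densely defined in a way compatible with $A$, so that hypothesis is used crucially rather than cosmetically. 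Everything else is bookkeeping with bounded operators and the uniqueness of the $m$-sectorial square root.
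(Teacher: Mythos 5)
Your argument hinges on the claim that from Theorem \ref{t4.1} one can write $A^{1/2}=B_1 A^{1/2}_{\gR}$ with $B_1$ a bounded square root of $I_{\cH}+iX$, and thence read off $\dom\big(A^{1/2}\big)=\dom\big(A^{1/2}_{\gR}\big)$ by uniqueness of the $m$-sectorial square root. This step is not valid. If $B_1$ is a bounded square root of $I_{\cH}+iX$, then
\[
(B_1 A^{1/2}_{\gR})^2 = B_1 A^{1/2}_{\gR} B_1 A^{1/2}_{\gR},
\]
which equals $A^{1/2}_{\gR}(I_{\cH}+iX)A^{1/2}_{\gR}=A$ only if $B_1$ commutes with $A^{1/2}_{\gR}$, and there is no reason for $X$ (or $B_1$) to commute with $A_{\gR}$. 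So $B_1 A^{1/2}_{\gR}$ is generally not a square root of $A$ at all, and uniqueness gives you nothing. Worse, the deduction you propose would not invoke hypothesis \eqref{4.9} anywhere: Theorem \ref{t4.1} holds for every $m$-sectorial $A$ with vertex $0$, so your argument, if it worked, would prove $\dom\big(A^{1/2}\big)=\dom\big(A^{1/2}_{\gR}\big)$ unconditionally. But that statement (the ``Kato square root problem'') is false in general; the hypothesis \eqref{4.9} is exactly what is needed to rescue it, and it must enter the proof in an essential, nontrivial way — not merely at the level of bookkeeping.

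This is where the paper's proof diverges decisively. The paper sets
\[
Y=(A+I_{\cH})^{1/2}(A^*+I_{\cH})^{-1/2},
\]
and it is \emph{here} that \eqref{4.9} does its work: the two square roots are closed operators with the same domain, so $Y$ and $Y^{-1}$ are bounded. One then checks that $Y$ is bounded $\theta$-sectorial (via formulas \eqref{4.13}--\eqref{4.14}) and, crucially, that $0\notin\sigma(\Re(Y))$ — this last point requires its own short contradiction argument using the representation $Y=(\Re Y)^{1/2}(I_{\cH}+iK)(\Re Y)^{1/2}$ of Theorem \ref{t4.1} applied to the bounded operator $Y$, not to $A$. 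Once $\Re Y$ is boundedly invertible, the identity
\[
\gt_{A_{\gR}+I_{\cH}}[f,g]=\big((\Re Y)^{1/2}(A^*+I_{\cH})^{1/2}f,\ (\Re Y)^{1/2}(A^*+I_{\cH})^{1/2}g\big)_{\cH}
\]
exhibits the real part as the square of a \emph{closed} operator $(\Re Y)^{1/2}(A^*+I_{\cH})^{1/2}$ (closed because $(A^*+I_{\cH})^{1/2}$ is closed and $\Re Y$ is boundedly invertible), giving both the closedness and the domain chain \eqref{4.11} in one stroke. Your idea of proving closedness via equivalence of the form norm with the graph norm of $A^{1/2}_{\gR}$ is the right instinct and close in spirit to this, but you cannot get off the ground without first establishing the domain equality $\dom\big(A^{1/2}\big)=\dom\big(A^{1/2}_{\gR}\big)$ by a legitimate route, and your proposed route for that is broken.
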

\begin{proof} Although this result is known (cf.\ Kato \cite{Ka61}), we thought it might be of some
interest to present an alternative proof. Since
 \begin{align}
 \begin{split}
\gt_{A+I_{\cH}}[f,g] &= \big((A^* + I_{\cH})^{1/2} f, (A + I_{\cH})^{1/2} g\big)_{\cH}
= \big((A^*)^{1/2} f, A^{1/2} g\big)_{\cH} + (f,g)_{\cH}, \\
& = \gt_A[f,g] + (f,g)_{\cH}, \quad f,g \in \dom \big(A^{1/2}\big) = \dom\big((A^*)^{1/2}\big),
\end{split}
\end{align}
it suffices to consider $\gt_{A+I_{\cH}}$ instead of $\gt_A$ in the remainder of this proof.
Since  $\dom \big(A^{1/2}\big)=\dom \big((A^*)^{1/2}\big)$, and the operators
$A^{1/2}$ and $(A^*)^{1/2}$ are closed,  one concludes that the operator $Y$ defined below,
satisfies
\begin{equation}\label{4.12}
Y = (A + I_{\cH})^{1/2}(A^* + I_{\cH})^{-1/2}\in \cB(\cH) \,  \text{ and } \,
Y^{-1} = (A^* + I_{\cH})^{1/2}(A + I_{\cH})^{-1/2}\in \cB(\cH).
 \end{equation}

Next we show that the operator $Y$ is accretive. Since $A$ is $m$-accretive, one gets
\begin{align}
\begin{split}
2\Re(Y) &= Y + Y^* = (A + I_{\cH})^{1/2}(A^* + I_{\cH})^{-1/2} + (A + I_{\cH})^{-1/2}(A^* + I_{\cH})^{1/2} \\
&=  (A + I_{\cH})^{1/2} \big[(A^* + I_{\cH})^{-1} + (A + I_{\cH})^{-1}\big] (A^* + I_{\cH})^{1/2} \ge 0.
\lb{4.13}
\end{split}
\end{align}
Similarly one obtains
\begin{align}
\begin{split}
2i \Im(Y) &= (Y - Y^*) = (A + I_{\cH})^{1/2}(A^* + I_{\cH})^{-1/2}
-  (A + I_{\cH})^{-1/2}(A^* + I_{\cH})^{1/2}  \\
&=  2i(A + I_{\cH})^{1/2} \Im\big((A^* + I_{\cH})^{-1}\big) \big((A^* + I_{\cH})^{1/2}\big).  \label{4.14}
\end{split}
\end{align}
Combining \eqref{4.13} with  \eqref{4.14} one concludes that $Y$ is a bounded $\theta$-sectorial operator, because so is $(A^* + I_{\cH})^{-1}$.

Using this fact and $0\notin \sigma(Y)$, we next show  that
$0\notin \sigma(\Re(Y))$. Indeed, assuming the contrary,
$0\in\sigma(\Re(Y))$, we get $0\in\sigma\big((\Re(Y))^{1/2}\big)$. Then
there exists a sequence $\{f_n\}_{n\in\bbN}\subset\cH$, $\|f_n\|=1$, $n\in\bbN$, such that
$(\Re(Y))^{1/2} f_n \underset{n\to\infty}{\longrightarrow} 0$.

On the other hand, by Theorem \ref{t4.1}, $Y$ admits the representation
\begin{equation}
Y  =  (\Re(Y))^{1/2}(I+iK)(\Re(Y))^{1/2},
\end{equation}
where $K=K^*$ and $\|K\|\le \tan(\theta)$. Consequently,
\begin{equation}
\|Y f_n\| = \big\|(\Re(Y))^{1/2}(I + iK) (\Re(Y))^{1/2} f_n\big\| \le
C \big\|(\Re(Y))^{1/2} f_n \big\| \underset{n\to\infty}{\longrightarrow} 0.
\end{equation}
Thus,  $0\in\sigma(Y)$, a contradiction. Hence, the operator $(\Re(Y))^{1/2}(A+I)^{1/2}$ is closed as
$(A+I)^{1/2}$ is and $\Re(Y)$ is boundedly invertible.

Moreover, using \eqref{4.12} one obtains
 \begin{align}
\gt_{A_{\gR}+I_{\cH}}[f,g] &= 2^{-1}\big((A^* + I_{\cH})^{1/2} f, (A + I_{\cH})^{1/2} g\big)_{\cH}
+ 2^{-1}\big((A + I_{\cH})^{1/2} f, (A^* + I_{\cH})^{1/2} g\big)_{\cH} \no \\
& = 2^{-1} \big((Y + Y^*)(A^* + I_{\cH})^{1/2} f, (A^* + I_{\cH})^{1/2} g\big)_{\cH}  \\
& = \big((\Re(Y))^{1/2}(A^* + I_{\cH})^{1/2} f, (\Re(Y))^{1/2}(A^* + I_{\cH})^{1/2}g\big)_{\cH},  \\
 &  \hspace*{3.7cm}   f,g \in \dom \big(A^{1/2}\big) = \dom \big((A^*)^{1/2}\big).  \no
\end{align}
Since the operator $(A^* + I_{\cH})^{1/2}$ is closed and $\Re(Y)$ is boundedly invertible, also the operator $(\Re(Y))^{1/2}(A^* + I_{\cH})^{1/2}$ is closed, and hence the form $\gt_{A_{\gR}+I_{\cH}}$ is closed too (cf.\ \cite[Problem III.5.7, Example VI.1.13]{Ka80}).
 \end{proof}

 \begin{remark}  \lb{r4.3}
Let $A=\diag \, (it_0, -it_0), \ t_0\in \bbR\backslash\{0\}$. Then $A$ is
maximal accretive operator in ${\bbC}^2$ and $0\in\rho(A)$,
although $A_{\gR}=0$ and hence $0\in\sigma(A_{\gR})$. This simple
example shows that the assumption on $A$ to be $m$-sectorial is
important in proving the implication $0\in\rho(A) \Longrightarrow
0\in\rho(A_{\gR})$.
\end{remark}

\begin{corollary} \lb{c4.4}
Let $A$ be $m$-sectorial in $\cH$ with a vertex $0$ and assume that
 \begin{equation}\label{4.18a}
\dom \big(A^{1/2}\big) = \dom \big((A^*)^{1/2}\big)
\end{equation}
and that
\begin{equation}
0 \in \rho(A).    \lb{4.19}
\end{equation}
Then, in addition to \eqref{4.10} and \eqref{4.11},  there exists an $\varepsilon_0>0$ such that the following inequalities hold:
\begin{align}
\gt_{A_{\gR}}[f,f]&= \Re\big(\big((A^*)^{1/2}f, A^{1/2}f\big)_{\cH}\big) \ge \varepsilon_0
\max \big\{\big\|A^{1/2}f\big\|^2_{\cH}, \big\|(A^*)^{1/2}f\big\|^2_{\cH} \big\}, \quad
 f\in\dom(\gt_{A_{\gR}}),     \label{4.20}  \\
\gt_{A_{\gR}}[f, f] &= \Re\big(\big((A^*)^{1/2}f, A^{1/2} f\big)_{\cH}\big)
\ge\varepsilon_0 \big\|A^{1/2}f\big\|_{\cH} \big\|(A^*)^{1/2}f\big\|_{\cH}, \quad f\in\dom(\gt_{A_{\gR}}).
\label{4.21A}
\end{align}
\end{corollary}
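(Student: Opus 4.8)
The strategy is to transfer the estimates to the bounded, boundedly invertible, sectorial operator $Y = (A+I_{\cH})^{1/2}(A^*+I_{\cH})^{-1/2}$ introduced in the proof of Lemma \ref{l4.2}, using the factorization of $\gt_{A_{\gR}+I_{\cH}}$ in terms of $\Re(Y)$ established there, and then to strip off the $I_{\cH}$ shift at the end. Concretely, the point is that \eqref{4.19} together with $m$-sectoriality forces $0\in\rho(A_{\gR})$ (this is exactly the implication highlighted in Remark \ref{r4.3}, which follows from the boundedness argument in the proof of Lemma \ref{l4.2}: $0\notin\sigma(Y)$ gives $0\notin\sigma(\Re(Y))$, and tracing this back one gets $A_{\gR}\ge cI_{\cH}$ for some $c>0$, so $A_{\gR}^{-1}\in\cB(\cH)$). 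With coercivity of $A_{\gR}$ in hand the two asserted inequalities become quantitative versions of the domain equality and closedness already recorded in \eqref{4.11}.

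\textbf{Step 1.} First I would record that $0\in\rho(A)$ plus $m$-sectoriality yields $A_{\gR}\ge\varepsilon_1 I_{\cH}$ for some $\varepsilon_1>0$; equivalently $\big\|A_{\gR}^{1/2}f\big\|_{\cH}^2 = \gt_{A_{\gR}}[f,f]\ge\varepsilon_1\|f\|_{\cH}^2$. Next, from Theorem \ref{t4.1}, $A = A_{\gR}^{1/2}(I_{\cH}+iX)A_{\gR}^{1/2}$ with $\|X\|_{\cB(\cH)}\le\tan(\theta)$, and since $\dom(A^{1/2})=\dom((A^*)^{1/2})=\dom(A_{\gR}^{1/2})$ by \eqref{4.11}, the operator $Z := A^{1/2}A_{\gR}^{-1/2}$ is bounded with bounded inverse, and likewise $Z_* := (A^*)^{1/2}A_{\gR}^{-1/2}\in\cB(\cH)$ with bounded inverse. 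So there are constants $0<m\le M<\infty$ with
\begin{equation}
m\big\|A_{\gR}^{1/2}f\big\|_{\cH} \le \big\|A^{1/2}f\big\|_{\cH} \le M\big\|A_{\gR}^{1/2}f\big\|_{\cH},
\end{equation}
and the same two-sided bound for $\big\|(A^*)^{1/2}f\big\|_{\cH}$, for all $f\in\dom(A_{\gR}^{1/2})=\dom(\gt_{A_{\gR}})$.

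\textbf{Step 2.} For \eqref{4.20}: since $\gt_{A_{\gR}}$ is the \emph{real part} of the form $f\mapsto((A^*)^{1/2}f,A^{1/2}f)_{\cH}$ (this is exactly $\gt_A$ as in \eqref{4.10}), I want $\gt_{A_{\gR}}[f,f]\ge\varepsilon_0\max\{\|A^{1/2}f\|_{\cH}^2,\|(A^*)^{1/2}f\|_{\cH}^2\}$. Writing $g=A_{\gR}^{1/2}f$ and using $A^{1/2}A_{\gR}^{-1/2}=Z$, one has $\gt_{A_{\gR}}[f,f]=\|g\|_{\cH}^2$ while $\|A^{1/2}f\|_{\cH}^2=\|Zg\|_{\cH}^2\le M^2\|g\|_{\cH}^2$, so $\gt_{A_{\gR}}[f,f]\ge M^{-2}\|A^{1/2}f\|_{\cH}^2$; symmetrically for $(A^*)^{1/2}$. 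Taking $\varepsilon_0$ to be the minimum of the two constants gives \eqref{4.20}, and \eqref{4.21A} is then immediate from \eqref{4.20} since $\max\{x,y\}\ge\sqrt{xy}$ for $x,y\ge0$ (one could also get a possibly better constant directly: $\|g\|_{\cH}^2\ge\|Zg\|_{\cH}\,\|g\|_{\cH}/M$ and $\|g\|_{\cH}\ge\|Z_*g\|_{\cH}/M$, whence $\|g\|_{\cH}^2\ge M^{-2}\|A^{1/2}f\|_{\cH}\,\|(A^*)^{1/2}f\|_{\cH}$).

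\textbf{Main obstacle.} The only genuinely substantive point is Step 1 — establishing the strict positivity $A_{\gR}\ge\varepsilon_1 I_{\cH}$ from $0\in\rho(A)$. Everything after that is bookkeeping with the bounded operators $Z$, $Z_*$ and their inverses. The cleanest route, which I would follow, is to reuse verbatim the argument already given inside the proof of Lemma \ref{l4.2}: apply that reasoning to $Y=(A+I_{\cH})^{1/2}(A^*+I_{\cH})^{-1/2}$ built from $A$ itself rather than from $A+I_{\cH}$ — since $0\in\rho(A)$, the operators $(A)^{1/2}(A^*)^{-1/2}$ and its inverse are bounded, are $\theta$-sectorial, and have $0$ in the resolvent set, so by the Liouville/representation argument there $\Re$ of this operator is boundedly invertible, and unwinding the identity $\gt_{A_{\gR}}[f,f]=\|(\Re(Y_0))^{1/2}(A^*)^{1/2}f\|_{\cH}^2$ (the analogue of the displayed formula in that proof, now without the shift) together with $(A^*)^{1/2}$ being boundedly invertible yields $\gt_{A_{\gR}}[f,f]\ge\varepsilon_1\|f\|_{\cH}^2$. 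I would present Step 1 by simply citing that computation; the remark \ref{r4.3} shows the $m$-sectoriality hypothesis is genuinely needed here, so I would not attempt a shortcut that bypasses it.
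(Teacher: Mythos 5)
Your proof is correct and relies on the same key step as the paper: re-running the proof of Lemma \ref{l4.2} with the unshifted operator $Y=A^{1/2}(A^*)^{-1/2}$ (legitimate since $0\in\rho(A)$) to conclude that $\Re(Y)$ is boundedly invertible. The only cosmetic difference is that the paper reads off $\varepsilon_1=\inf\sigma(\Re(Y))$ and $\varepsilon_2=\inf\sigma\big(\Re\big(Y^{-1}\big)\big)$ directly from the identity $\gt_{A_{\gR}}[f,f]=\big((A^*)^{1/2}f,\Re(Y)(A^*)^{1/2}f\big)_{\cH}$ (and its $Y^{-1}$-counterpart), whereas you pivot through $A_{\gR}^{1/2}$ and the closed-graph-bounded comparison operators $Z=A^{1/2}A_{\gR}^{-1/2}$, $Z_*=(A^*)^{1/2}A_{\gR}^{-1/2}$ --- an equivalent though slightly longer route.
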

 \begin{proof}
As in the proof of Lemma \ref{l4.2}, we may write $(A)^{1/2} = Y(A^*)^{1/2}$, where $Y$ is
$m$-accretive and  $0\in\rho(\Re(Y))$. In this context we note that we replaced $A+I_{\cH}$ and
$A^*+I_{\cH}$ by $A$ and $A^*$ in the definition of $Y$ in \eqref{4.12} to arrive at the operator
\begin{equation}
Y = A^{1/2}(A^*)^{-1/2}\in \cB(\cH),
\end{equation}
which is possible due to the hypothesis $0 \in \rho(A)$ (implying $0 \in \rho(A^*)$). Therefore,
\begin{align}
\begin{split}
\gt_{A_{\gR}}[f, f] &= \Re\big(\big((A^*)^{1/2}f, (A)^{1/2} f\big)_{\cH}\big) =
\big((A^*)^{1/2}f, \Re(Y) (A^*)^{1/2}f\big)_{\cH}  \\
& \ge \varepsilon_1 \big\|(A^*)^{1/2}f\big\|^2_{\cH},  \quad f\in\dom(\gt_{A_{\gR}}),
\end{split}
\end{align}
where $\varepsilon_1 = \inf \big(\sigma(\Re(Y))\big)$.  Similarly, one gets
\begin{equation}
\Re\big((A^*)^{1/2}f, (A)^{1/2} f\big)_{\cH} \ge \varepsilon_2 \big\|(A)^{1/2}f\big\|^2_{\cH},
\quad f\in\dom(\gt_{A_{\gR}}),
\end{equation}
where $\varepsilon_2 =\inf\big(\sigma\big(\Re\big(Y^{-1}\big)\big)\big)$.
Setting $\varepsilon_0 = \min\{\varepsilon_1, \varepsilon_2\}$ one
arrives at \eqref{4.20}. Inequality \eqref{4.21A} is then immediate from \eqref{4.20}.
\end{proof}

\begin{remark}  \lb{r4.5}
$(i)$ Inequality \eqref{4.21A} is mentioned in \cite{Ka62}, and because of inequality \eqref{4.21A},
$A^{1/2}$ and $(A^*)^{1/2}$ are said to have an acute angle.  \\
$(ii)$ In general, if $A$ is $m$-accretive (i.e., without assuming \eqref{4.18a} and
\eqref{4.19}), Kato \cite{Ka61a} proved
\begin{equation}
\dom (A^{\alpha}) = \dom ((A^*)^{\alpha}), \quad \alpha\in (0,1/2),
\end{equation}
and that the (right-hand) inequality in \eqref{4.21A} holds with $1/2$ replaced by $\alpha$ (cf.\ also
\cite{Ka62}, \cite[Theorem IV.5.1]{SF70}), that is, there exists an $\varepsilon_0(\alpha)>0$ such that
\begin{equation}
\Re\big(\big((A^*)^{\alpha}f, A^{\alpha} f\big)_{\cH}\big)
\ge\varepsilon_0(\alpha) \big\|A^{\alpha}f\big\|_{\cH} \big\|(A^*)^{\alpha}f\big\|_{\cH},
\quad f\in\dom(A^{\alpha}), \quad \alpha\in (0,1/2).
\end{equation}
$(iii)$ We recall that $\ker (A)=\ker( A^*)$  if $A$ is $m$-accretive, in particular, $\ker(A)$ is a reducing subspace for $A$  (cf., e.g., \cite[p.\ 171]{SF70}). Thus, one can write $A=A_0 \oplus A_1$ with respect to the decomposition $\cH = P_0 \cH \oplus  [I_{\cH} - P_0] \cH$, where $P_0$ denotes the orthogonal projecton onto $\ker(A)$, such that $A_0 = P_0 A P_0 =0$ and $\ker(A_1) =\{0\}$. Thus, also
$A^{\alpha} = A_0 \oplus A_1^{\alpha}$, $\alpha \in (0,1]$, with $\ker(A_1^{\alpha}) = \{0\}$. Hence, one actually obtains
\begin{equation}
\ker (A)=\ker(A^{\alpha})=\ker( A^*), \quad \alpha \in (0,1],    \lb{4.29}
\end{equation}
if $A$ is $m$-accretive.
\end{remark}

\begin{definition}  \lb{d4.6}
Let $A$ be an $m$-sectorial operator in $\cH$ with a vertex $0$ and $B$ a densely
defined, closed operator in $\cH$. Then $B$ is called {\it $A$-form bounded}
(resp., {\it $A$-form compact\,}) if
\begin{equation}
\dom \big(|B|^{1/2}\big)\supseteq \dom \big(A^{1/2}\big) \, \text{ and } \,
|B|^{1/2}(A + I_{\cH})^{-1/2}  \in{\cB}(\cH) \; (\text{resp.,} \in \cB_{\infty}(\cH)).  \lb{4.15}
\end{equation}
\end{definition}

\medskip

Again, $B$ is $A$-form bounded (resp., $A$-form compact) if and only if $|B|$ is.

We also note again that due to the closedness of $|B|^{1/2}$ and $A^{1/2}$,
$\dom \big(|B|^{1/2}\big)\supseteq \dom \big(A^{1/2}\big)$ alone implies that $|B|^{1/2}$ is
$A^{1/2}$-bounded (cf.\ \cite[Remark IV.1.5]{Ka80}), and hence the first condition in \eqref{4.15}
implies the second in connection with form boundedness.

In the following, for simplicity of notation, we agree that for a densely defined linear operator $C$
in $\cH$,
\begin{equation}
\text{the symbol $C^{\#}$ either equals $C$ or $C^*$}.
\end{equation}

\begin{theorem}  \lb{t4.7}
Let $A$ be $m$-sectorial in $\cH$ with a vertex $0$ and assume that
\begin{equation}
\dom \big(A^{1/2}\big) = \dom\big((A^*)^{1/2}\big).
\end{equation}
In addition, suppose that $B$ is a densely defined and closed operator in $\cH$. Then the following assertions hold: \\
$(i)$ $B$ is $A$-form bounded $($resp., $A$-form compact\,$)$ if and only if it
is $A_{\gR}$-form bounded  $($resp., $A_{\gR}$-form compact\,$)$, that is,
\begin{align} \label{equiv1}
& |B|^{1/2}\big(A^{\#} + I_{\cH}\big)^{-1/2} \in \cB(\cH) \; (\text{resp.,} \in  \cB_{\infty}(\cH)) \no \\
& \text{if and only if } \\
&  |B|^{1/2}(A_{\gR} + I_{\cH})^{-1/2} \in \cB(\cH)\; (\text{resp.,} \in \cB_{\infty}(\cH)).  \no
 \end{align}
$(ii)$ The following conditions $(\alpha)$--$(\delta)$ are equivalent:
\begin{align}
\begin{split}
& (\alpha) \quad \big((A^{\#})^*+I_{\cH}\big)^{-1/2}B\big(A^{\#}+I_{\cH}\big)^{-1/2} \,
\text{ is closable in $\cH$,}   \lb{4.18} \\
& (\beta) \quad (A^* + I_{\cH})^{-1/2}B(A^*+I_{\cH})^{-1/2} \,
\text{ is closable in $\cH$,}   \\
& (\gamma) \quad (A + I_{\cH}\big)^{-1/2}B(A+I_{\cH})^{-1/2} \,
\text{ is closable in $\cH$,}   \\
& (\delta) \quad (A_{\gR} + I_{\cH})^{-1/2}B(A_{\gR}+I_{\cH})^{-1/2} \, \text{ is closable in $\cH$.}
\end{split}
\end{align}
$(iii)$ The following conditions $(\alpha)$--$(\delta)$ are equivalent:
\begin{align}
\begin{split}
&  (\alpha) \quad \ol{\big((A^{\#})^*+I_{\cH}\big)^{-1/2}B\big(A^{\#}+I_{\cH}\big)^{-1/2}}
\in \cB(\cH) \; (\text{resp.,} \in \cB_{\infty}(\cH)),    \\
&  (\beta) \quad \ol{(A^*+I_{\cH})^{-1/2}B(A^*+I_{\cH})^{-1/2}} \in \cB(\cH) \; (\text{resp.,}
\in \cB_{\infty}(\cH)),     \label{equiv3}  \\
&  (\gamma) \quad \ol{(A+I_{\cH})^{-1/2}B(A+I_{\cH})^{-1/2}} \in \cB(\cH) \; (\text{resp.,}
\in \cB_{\infty}(\cH)),    \\
& (\delta) \quad \ol{(A_{\gR} + I_{\cH})^{-1/2}B(A_{\gR}+I_{\cH})^{-1/2}} \in  \cB(\cH) \;
(\text{resp.,} \in \cB_{\infty}(\cH)).
\end{split}
\end{align}
\end{theorem}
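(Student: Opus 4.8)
The key observation is that all four operators $A^\#+I_\cH$, $A^*+I_\cH$, $A+I_\cH$, and $A_\gR+I_\cH$ have square roots with one and the same domain $\dom\big(A^{1/2}\big)=\dom\big((A^*)^{1/2}\big)=\dom\big(A_\gR^{1/2}\big)$ (by Lemma \ref{l4.2} and \eqref{4.11}), and that the operators passing between these square roots are bounded with bounded inverses. Concretely, set
\begin{equation}
Z = (A+I_\cH)^{1/2}(A_\gR+I_\cH)^{-1/2}, \quad Z^{\#} = (A^\#+I_\cH)^{1/2}(A_\gR+I_\cH)^{-1/2},
\end{equation}
and their adjoints; by \eqref{4.12} (applied to $A$ and, separately, to the already self-adjoint $A_\gR$, which is trivial) all of $Z$, $Z^*$, $Z^{\#}$, $(Z^{\#})^*$ lie in $\cB(\cH)$ and are boundedly invertible. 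I would first record this explicitly as a preliminary step, since everything else is formal manipulation once it is in hand.

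\textbf{Step 1 (form boundedness, part $(i)$).} Writing $|B|^{1/2}(A^\#+I_\cH)^{-1/2} = \big[|B|^{1/2}(A_\gR+I_\cH)^{-1/2}\big](Z^{\#})^{-1}$ on the relevant domain and, conversely, $|B|^{1/2}(A_\gR+I_\cH)^{-1/2} = \big[|B|^{1/2}(A^\#+I_\cH)^{-1/2}\big]Z^{\#}$, one sees that membership in $\cB(\cH)$ (resp.\ $\cB_\infty(\cH)$) of one factor is equivalent to that of the other, because composition with a fixed operator in $\cB(\cH)$ preserves both classes. The domain inclusion $\dom\big(|B|^{1/2}\big)\supseteq\dom\big(A^{1/2}\big)$ is then automatic from the coincidence of domains of the square roots together with the closed-graph remark following Definition \ref{d4.6}. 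This proves \eqref{equiv1}.

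\textbf{Steps 2 and 3 (parts $(ii)$ and $(iii)$).} For the sandwiched operators one uses the algebraic identity, valid on $\dom(A_\gR)$ (a core for all the forms involved),
\begin{equation}
\big((A^\#)^*+I_\cH\big)^{-1/2}B\big(A^\#+I_\cH\big)^{-1/2}
= \big[(Z^{\#})^*\big]^{-1}\,\big[(A_\gR+I_\cH)^{-1/2}B(A_\gR+I_\cH)^{-1/2}\big]\,(Z^{\#})^{-1},
\end{equation}
with the analogous identities relating the $(\beta)$, $(\gamma)$, $(\delta)$ operators (these are the special cases $A^\#=A^*$ and $A^\#=A$, and the trivial case). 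Since pre- and post-composition by the fixed bounded, boundedly invertible operators $\big[(Z^{\#})^*\big]^{-1}$ and $(Z^{\#})^{-1}$ is a topological isomorphism on the space of everywhere-defined bounded (resp.\ compact) operators and, more to the point, preserves closability of a densely defined operator and its closure's membership in $\cB(\cH)$ or $\cB_\infty(\cH)$, the equivalences in $(ii)$ and $(iii)$ follow at once. For $(iii)$ one may alternatively invoke part $(i)$ together with the generalized polar decomposition $B=|B^*|^{1/2}U|B|^{1/2}$ of \eqref{3.1} exactly as in \eqref{3.15} and \eqref{3.30}, writing $\ol{(A_\gR+I_\cH)^{-1/2}B(A_\gR+I_\cH)^{-1/2}} = \big[|B^*|^{1/2}(A_\gR+I_\cH)^{-1/2}\big]^*U\big[|B|^{1/2}(A_\gR+I_\cH)^{-1/2}\big]$ and noting that $B$ is $A_\gR$-form bounded (resp.\ compact) iff $B^*$ is, since $|B|^{1/2}$ and $|B^*|^{1/2}$ have the same relationship to $A_\gR^{1/2}$ by symmetry.

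\textbf{Main obstacle.} The only genuinely delicate point is the justification that the operator identities above hold \emph{as stated}, i.e.\ that the closures on the right-hand sides are what one expects; this rests on having a common core (namely $\dom(A_\gR)$, or $C^\infty(A_\gR)$) on which all the unbounded factors and their fractional powers act in the naive way, and on the fact that the bounded, boundedly invertible intertwiners $Z$, $Z^\#$ map this core appropriately. Establishing that carefully — essentially a bookkeeping argument with \eqref{4.12}, the second representation theorem, and Lemma \ref{l4.2} — is where the real work lies; once the intertwiners are in place, parts $(i)$–$(iii)$ are immediate.
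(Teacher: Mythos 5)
Your main line of argument is essentially the paper's proof: you introduce the bounded, boundedly invertible intertwiner $Z^{\#} = (A^{\#}+I_{\cH})^{1/2}(A_{\gR}+I_{\cH})^{-1/2}$ (the paper's $T_{\#}$ in \eqref{4.21}), observe that $(Z^{\#})^{-1}$ is bounded thanks to the domain equality $\dom\big(A^{1/2}\big)=\dom\big((A^*)^{1/2}\big)=\dom\big(A_{\gR}^{1/2}\big)$ and closedness of the square roots, and then transfer membership in $\cB(\cH)$ or $\cB_\infty(\cH)$, closability, and boundedness of the closure between the various expressions via the intertwining identity — exactly as in the paper's proof of $(i)$--$(iii)$. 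Your concern in the ``main obstacle'' paragraph is the same bookkeeping the paper handles with its facts $(1)$--$(3)$ about closures of products involving a bounded factor with bounded inverse, and it goes through. So the proposal is correct and follows the paper's route.

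One caveat worth flagging: the \emph{alternative} you sketch for part $(iii)$, namely writing $\ol{(A_\gR+I_{\cH})^{-1/2}B(A_\gR+I_{\cH})^{-1/2}} = \big[|B^*|^{1/2}(A_\gR+I_{\cH})^{-1/2}\big]^*U\big[|B|^{1/2}(A_\gR+I_{\cH})^{-1/2}\big]$ and invoking ``symmetry'' between $B$ and $B^*$, does not work as a stand-alone argument. Theorem~\ref{t4.7} does not assume that $B$ or $B^*$ is $A_{\gR}$-form bounded, so one cannot presume the two bracketed factors lie in $\cB(\cH)$; and the assertion that $B$ is $A_{\gR}$-form bounded if and only if $B^*$ is, is false in general, since $\dom\big(|B|^{1/2}\big)$ and $\dom\big(|B^*|^{1/2}\big)$ can differ drastically (compare Example~\ref{e3.3}, where $T_{max}$ is $T_F$-bounded but $T_{max}^*=T_{min}$ is not). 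The polar-decomposition route is what Theorem~\ref{t3.2}\,$(ii)$ and Theorem~\ref{t3.5}\,$(ii)$ use, but those results carry the extra hypothesis $\dom(B)\cap\dom(B^*)\supseteq\dom(A)$; here that hypothesis is absent, and the intertwining argument — your Step 2/3 main version — is the one that actually closes the proof.
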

 \begin{proof}
$(i)$  Since $\dom \big(A^{1/2}\big)=\dom \big(A^{1/2}_{\gR}\big)$ and the operators
$A^{1/2}$ and $A^{1/2}_{\gR}$ are closed, one concludes that the operator $T_{\#}$ defined below,
satisfies
\begin{equation}\label{4.21}
T_{\#}= \big(A^{\#}+I_{\cH}\big)^{1/2}(A_{\gR}+I_{\cH})^{-1/2}\in \cB(\cH) \,  \text{ and } \,
T_{\#}^{-1} = (A_{\gR}+I_{\cH})^{1/2}\big(A^{\#}+I_{\cH}\big)^{-1/2} \in \cB(\cH).
\end{equation}
Therefore, if $|B|^{1/2}(A_{\gR} + I_{\cH})^{-1/2} \in \cB(\cH)$, then also
$|B|^{1/2}\big(A^{\#}+I_{\cH}\big)^{-1/2} \in \cB(\cH)$ and the identity
\begin{equation}
|B|^{1/2}(A_{\gR} + I_{\cH})^{-1/2}T_{\#}^{-1} = |B|^{1/2}\big(A^{\#}+I_{\cH}\big)^{-1/2}
\end{equation}
holds. By \eqref{4.21}, this argument can be reversed, proving the
equivalence \eqref{equiv1}. That $|B|^{1/2}\big(A^{\#} +
I_{\cH}\big)^{-1/2} \in \cB_{\infty}(\cH)$ if and only if
$|B|^{1/2}(A_{\gR} + I_{\cH})^{-1/2} \in \cB_{\infty}(\cH)$ is
proven in the same manner.

$(ii)$ Assume that
$(A_{\gR}+I_{\cH})^{-1/2}B(A_{\gR}+I_{\cH})^{-1/2}$ is closable in
$\cH$. Then so is $\big(T_{\#}^{-1}\big)^*
(A_{\gR}+I_{\cH})^{-1/2}B(A_{\gR}+I_{\cH})^{-1/2} T_{\#}^{-1}$ due
to
\eqref{4.21}. This follows from the following two facts: \\
$(1)$ If $S_1\in\cB(\cH)$ and $S_2$ is a closable (resp., closed) operator in $\cH$, then $S_2S_1$ is closable (resp., closed) in $\cH$. \\
$(2)$ If $T_1$, $T_2$ are closable (resp., closed) operators in $\cH$, and $T_2^{-1}\in\cB(\cH)$, then $T_2T_1$ is closable (resp., closed) in $\cH$ (cf.\ \cite[p.\ 96]{We80}).

Since
\begin{equation}
\big(T_{\#}^{-1}\big)^* (A_{\gR}+I_{\cH})^{-1/2}B(A_{\gR}+I_{\cH})^{-1/2} T_{\#}^{-1}
= \big((A^{\#})^*+I_{\cH}\big)^{-1/2}B\big(A^{\#}+I_{\cH}\big)^{-1/2},     \lb{4.22}
\end{equation}
this proves the closability of
$\big((A^{\#})^*+I_{\cH}\big)^{-1/2}B\big(A^{\#}+I_{\cH}\big)^{-1/2}$
in $\cH$. Again by \eqref{4.21},  this argument can be reversed,
proving the equivalence of $(\alpha)$ and $(\delta)$ in
\eqref{4.18}. The remaining equivalences in \eqref{4.18} follow
from \eqref{4.12} which permits one to individually exchange $A$
and $A^*$ (or $A^*$ and $A$) in the most left and/or most right
factor $(\dots)^{-1/2}$ in $(\alpha)$.

$(iii)$ If $(A_{\gR}+I_{\cH})^{-1/2}B(A_{\gR}+I_{\cH})^{-1/2}$ has
a closure in $\cB(\cH)$ (resp., in $\cB_{\infty}(\cB)$), then \eqref{4.21} and
\eqref{4.22} yield
\begin{align}
\big(T_{\#}^{-1}\big)^* \ol{(A_{\gR}+I_{\cH})^{-1/2}B(A_{\gR}+I_{\cH})^{-1/2}} T_{\#}^{-1}
& = \big(T_{\#}^{-1}\big)^* \ol{(A_{\gR}+I_{\cH})^{-1/2}B(A_{\gR}+I_{\cH})^{-1/2} T_{\#}^{-1}} \no \\
& = \ol{\big(T_{\#}^{-1}\big)^* (A_{\gR}+I_{\cH})^{-1/2}B(A_{\gR}+I_{\cH})^{-1/2} T_{\#}^{-1}}   \no \\
& = \ol{\big((A^{\#})^*+I_{\cH}\big)^{-1/2}B\big(A^{\#}+I_{\cH}\big)^{-1/2}}.    \lb{4.23}
\end{align}
Here we used the following facts: \\
$(1)$ Let $S$ be a bounded operator in $\cH$ with domain $\dom(S)$. Then $S$ is closable and the closure of $S$ has domain $\ol{\dom(S)}\subseteq\cH$. \\
$(2)$ ${\ol S_1} \in \cB(\cH)$, $S_2\in\cB(\cH)$, $\dom(S_1 S_2)$ dense in $\cH$, then
$\ol{S_1 S_2} = {\ol S_1} S_2$. \\
$(3)$ $T_1 \in \cB(\cH)$, ${\ol T_2} \in \cB(\cH)$, then  $\ol{T_1 T_2} = T_1 {\ol T_2}$.

Thus,  $\big((A^{\#})^*+I_{\cH}\big)^{-1/2}B\big(A^{\#}+I_{\cH}\big)^{-1/2}$ has closure in $\cB(\cH)$ (resp., in $\cB_{\infty}(\cB)$). Once more by \eqref{4.21}, this argument is reverseable, proving the equivalence of $(\alpha)$ and $(\delta)$ in \eqref{equiv3}. As in the final part of the proof of item $(ii)$, the remaining equivalences in \eqref{equiv3} follow from \eqref{4.12}.
\end{proof}

To prove one of our  main results  on sectorial operators we next need
a  generalization of Theorem \ref{t3.4} to the sectorial case.

First we recall that $S_1$ is called {\it subordinated} to $S_2$
(cf., e.g., \cite[Sect.\ 14.5]{KZPS76}) if
\begin{equation}
\dom(S_1) \supseteq \dom(S_2), \, \text{ and for some $C>0$, } \,
\|S_1 f\|_{\cH} \le C \|S_2 f\|_{\cH}, \; f \in \dom(S_2).
 \end{equation}

\begin{theorem}  $($\cite[Theorem 1]{Ka61}$)$ \lb{t4.8}
Let $A, B$ be $m$-accretive operators in $\cH$ and assume that $T\in\cB(\cH)$.
In addition, assume that there exists a constant $C>0$ such that
\begin{equation}\label{4.34}
T\dom(A) \subseteq \dom(B) \text{ and } \, \|B Tf\|_{\cH} \le C \|Af\|_{\cH}, \;  f\in\dom(A).
\end{equation}
Then for all $\alpha\in(0,1]$, there exists a constant $C_{\alpha}>0$ such that
\begin{equation}
T\dom(A^{\alpha})\subseteq\dom(B^{\alpha}) \text{ and } \,
\|B^{\alpha}T g\|_{\cH} \le C_{\alpha}\|A^{\alpha}g\|_{\cH}, \; g\in\dom(A^{\alpha}).
\end{equation}
\end{theorem}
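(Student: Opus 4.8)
The plan is to prove the asserted inequality first for $g$ in the core $\dom(A)$, where every fractional power in sight is given by a convergent integral, and then to upgrade to all of $\dom(A^{\alpha})$ by a closedness argument. Since $\alpha=1$ is exactly \eqref{4.34}, fix $\alpha\in(0,1)$. First I would reduce to the case $0\in\rho(A)\cap\rho(B)$ by translating $A\mapsto A+I_{\cH}$, $B\mapsto B+I_{\cH}$: this keeps $A,B$ $m$-accretive, keeps \eqref{4.34} with a larger constant $C$, and its conclusion returns the original one because $(B(B+I_{\cH})^{-1})^{\alpha}$ and $((A+I_{\cH})A^{-1})^{\alpha}$ are bounded operators.

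Next I would record Balakrishnan's representation: for $m$-accretive $A$ with $0\in\rho(A)$ and $h\in\dom(A)$, one has $A^{\alpha}h=\frac{\sin(\pi\alpha)}{\pi}\int_{0}^{\infty}s^{\alpha-1}A(A+sI_{\cH})^{-1}h\,ds$, the integral converging in $\cH$ since $\|A(A+sI_{\cH})^{-1}h\|_{\cH}\le 2\|h\|_{\cH}$ near $s=0$ and $\|A(A+sI_{\cH})^{-1}h\|_{\cH}=\|(A+sI_{\cH})^{-1}Ah\|_{\cH}\le s^{-1}\|Ah\|_{\cH}$ near $s=\infty$. In particular, for $g\in\dom(A)$ the hypothesis \eqref{4.34} gives $Tg\in\dom(B)$, and the same formula for $B$ then shows $Tg\in\dom(B^{\alpha})$ with $B^{\alpha}Tg=\frac{\sin(\pi\alpha)}{\pi}\int_{0}^{\infty}s^{\alpha-1}(B+sI_{\cH})^{-1}BTg\,ds$. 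Also, \eqref{4.34} together with $(A+sI_{\cH})^{-1}\cH\subseteq\dom(A)$ gives the uniform bound $\|BT(A+sI_{\cH})^{-1}\|_{\cB(\cH)}\le C\|A(A+sI_{\cH})^{-1}\|_{\cB(\cH)}\le 2C$ for all $s>0$, which is the only way the hypothesis enters quantitatively.

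The heart of the proof is the estimate $\|B^{\alpha}Tg\|_{\cH}\le C_{\alpha}\|A^{\alpha}g\|_{\cH}$ for $g\in\dom(A)$, and this is the step I expect to be the main obstacle. A naive absolute-value bound fails: applying the triangle inequality in the integral for $B^{\alpha}Tg$ and optimizing a split at $s=\lambda$ only produces the moment-type bound $\|B^{\alpha}Tg\|_{\cH}\le C_{\alpha}\|Tg\|_{\cH}^{1-\alpha}\|BTg\|_{\cH}^{\alpha}\le C_{\alpha}'\|g\|_{\cH}^{1-\alpha}\|Ag\|_{\cH}^{\alpha}$, which lies on the wrong side of $\|A^{\alpha}g\|_{\cH}$, and expanding into the double integral $(B^{\alpha}Tg,\varphi)_{\cH}=(\tfrac{\sin(\pi\alpha)}{\pi})^{2}\iint_{(0,\infty)^{2}}s^{\alpha-1}t^{-\alpha}(B(B+sI_{\cH})^{-1}T(A+tI_{\cH})^{-1}A^{\alpha}g,\varphi)_{\cH}\,ds\,dt$ and taking absolute values inside leaves spurious terms involving $\|Ag\|_{\cH}$. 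One has to genuinely exploit the cancellation in these vector-valued integrals, as in Kato's original argument: keep the pairing with $\varphi$, and estimate the kernel using the uniform bound on $\|BT(A+sI_{\cH})^{-1}\|_{\cB(\cH)}$, the contractivity of $s(B+sI_{\cH})^{-1}$ and a Schur-type control of the resulting double-integral operator, to obtain that the double integral is $\le C_{\alpha}\|A^{\alpha}g\|_{\cH}\|\varphi\|_{\cH}$. (If $A$ and $B$ are moreover $m$-sectorial with half-angle $<\pi/2$, a cleaner route is available: a Heinz-type three-lines argument for the holomorphic family $z\mapsto(B(I_{\cH}+\varepsilon B)^{-1})^{z}\,T\,A^{-z}$, whose norms on $\Re(z)=0$ and $\Re(z)=1$ are bounded by $\|T\|_{\cB(\cH)}$ and by $C$ respectively, which has exponential type $<\pi$ in $\Im z$ thanks to the sectoriality, and which reproduces $B^{\alpha}TA^{-\alpha}$ as $\varepsilon\downarrow0$; for operators that are only $m$-accretive the type is merely $\le\pi$, the three-lines bound is borderline, and this is exactly why Kato's direct estimate is needed.)

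Finally I would pass from the core to all of $\dom(A^{\alpha})$. Given $f\in\dom(A^{\alpha})$, pick $f_{n}\in\dom(A)$ with $f_{n}\to f$ and $A^{\alpha}f_{n}\to A^{\alpha}f$ (possible since $\dom(A)$ is a core for $A^{\alpha}$). Then $Tf_{n}\to Tf$ in $\cH$, while $\|B^{\alpha}T(f_{n}-f_{m})\|_{\cH}\le C_{\alpha}\|A^{\alpha}(f_{n}-f_{m})\|_{\cH}$ shows $\{B^{\alpha}Tf_{n}\}_{n}$ is Cauchy; since $B^{\alpha}$ is closed, $Tf\in\dom(B^{\alpha})$ and $B^{\alpha}Tf=\lim_{n}B^{\alpha}Tf_{n}$, whence $\|B^{\alpha}Tf\|_{\cH}\le C_{\alpha}\|A^{\alpha}f\|_{\cH}$. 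This establishes $T\dom(A^{\alpha})\subseteq\dom(B^{\alpha})$ and the desired inequality; undoing the initial translation completes the proof.
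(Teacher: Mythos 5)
The paper does not actually prove Theorem \ref{t4.8}: it is stated with a citation to Kato \cite{Ka61} and taken as known. The paper's novel contribution is to observe that Theorem \ref{t4.8}, its special case $T = I_{\cH}$ (Corollary \ref{c4.9}), and Theorem \ref{t4.10} are all \emph{equivalent}, and to give a new proof of the step Corollary \ref{c4.9} $\Longrightarrow$ Theorem \ref{t4.10} using the generalized polar decomposition \eqref{2.26}, plus a deduction of Theorem \ref{t4.8} from Theorem \ref{t4.10}. No independent proof of Theorem \ref{t4.8} from scratch is offered, so your blind attempt is in any case solving a harder problem than the paper does.

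As to the attempt itself: the framing is sound (reduction to $0\in\rho(A)\cap\rho(B)$, Balakrishnan representation, closure argument from the core $\dom(A)$ at the end), and you have correctly identified where the difficulty lies. But the heart of the argument is left as a genuine gap. You observe, rightly, that a naive absolute-value bound on the integrand fails; indeed, using the two kernel bounds you have available, namely $\|B(B+sI_{\cH})^{-1}T(A+tI_{\cH})^{-1}\|\lesssim\min\big(s^{-1},t^{-1}\big)$ coming from $\|(B+sI_{\cH})^{-1}BT(A+tI_{\cH})^{-1}\|\le 2C/s$ and from $\|B(B+sI_{\cH})^{-1}\|\le 2$, $\|T(A+tI_{\cH})^{-1}\|\le\|T\|/t$, the integral
\begin{equation*}
\int_0^\infty\!\!\int_0^\infty s^{\alpha-1}\,t^{-\alpha}\,\min\big(s^{-1},t^{-1}\big)\,ds\,dt
\end{equation*}
diverges logarithmically along the diagonal $s\sim t$ (substitute $u=s/t$ to see that it factors as $\int_0^\infty dt/t$ times a convergent $u$-integral). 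So the ``Schur-type control'' you then invoke to obtain $|(B^{\alpha}Tg,\varphi)_{\cH}|\le C_{\alpha}\|A^{\alpha}g\|_{\cH}\|\varphi\|_{\cH}$ cannot work if it is based only on taking norms of the kernel; this is precisely the estimate your own preceding sentence warns against, and the proposal does not supply the cancellation mechanism that rescues the argument. (Your parenthetical comment on the $m$-sectorial case and the borderline exponential type $\pi$ is exactly the obstruction, but stating the obstruction is not the same as overcoming it.) Kato's actual proof in \cite{Ka61} is a nontrivial real-variable argument exploiting the resolvent identity in the double integral, and that content is precisely what is missing here. Everything else in the proposal — the translation trick, the claim $\|BT(A+sI_{\cH})^{-1}\|_{\cB(\cH)}\le 2C$, and the final density/closedness step — is fine.
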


In the sequel we need the special case of Theorem \ref{t4.8} corresponding to
$T=I_{\cH}$. However,  it turns out, that this special case is, in fact, equivalent to the general case displayed in Theorem \ref{t4.8}, as will be shown subsequently.

\begin{corollary}   \lb{c4.9}
Suppose $A$ and $B$ are $m$-accretive operators in $\cH$ and $B$
is subordinated to $A$. Then for all $\alpha \in (0,1]$, $B^{\alpha}$ is subordinated to
$A^{\alpha}$,  that is, the inequality
\begin{equation}
\|Bf\|_{\cH} \le C_1\|Af\|_{\cH}, \; f \in \dom(A)\subseteq \dom(B) \lb{3.18a}
\end{equation}
for some constant $C_1>0$ independent of $f\in \dom (A)$, implies
\begin{equation}
\dom (A^\alpha)\subseteq \dom (B^\alpha) \, \text{ and } \,
\|B^\alpha g\|_{\cH} \le
C_{\alpha} \|A^\alpha g\|_{\cH}, \;  g\in\dom (A^\alpha)    \lb{3.19A}
\end{equation}
for some constant $C_\alpha>0$ independent of $g\in \dom (A^\alpha)$.
\end{corollary}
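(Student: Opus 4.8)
The plan is simply to read the statement off from Theorem~\ref{t4.8} by taking $T=I_{\cH}$, and then, in order to justify the equivalence announced just before the corollary, to indicate how Theorem~\ref{t4.8} is in turn recovered from this seemingly weaker assertion.

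For the corollary itself: if $A,B$ are $m$-accretive in $\cH$ and $B$ is subordinated to $A$, i.e.\ $\dom(A)\subseteq\dom(B)$ and $\|Bf\|_{\cH}\le C_1\|Af\|_{\cH}$ for all $f\in\dom(A)$, then the pair $(A,B)$ together with $T:=I_{\cH}\in\cB(\cH)$ and $C:=C_1$ satisfies hypothesis \eqref{4.34} of Theorem~\ref{t4.8}: indeed $I_{\cH}\dom(A)=\dom(A)\subseteq\dom(B)$ and $\|B\,I_{\cH}f\|_{\cH}=\|Bf\|_{\cH}\le C_1\|Af\|_{\cH}$ for $f\in\dom(A)$. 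Theorem~\ref{t4.8} then produces, for every $\alpha\in(0,1]$, a constant $C_\alpha>0$ with $\dom(A^\alpha)=I_{\cH}\dom(A^\alpha)\subseteq\dom(B^\alpha)$ and $\|B^\alpha g\|_{\cH}=\|B^\alpha I_{\cH}g\|_{\cH}\le C_\alpha\|A^\alpha g\|_{\cH}$ for all $g\in\dom(A^\alpha)$; this is exactly the implication \eqref{3.18a}$\Rightarrow$\eqref{3.19A}. So the routine part of the argument is this one-step specialization.

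To substantiate the claimed equivalence --- that Theorem~\ref{t4.8} conversely follows from the corollary (more precisely, from the corollary applied on orthogonal sums of copies of $\cH$) --- the plan is a doubling device. Given $m$-accretive $A,B$ in $\cH$ and $T\in\cB(\cH)$ satisfying \eqref{4.34}, one passes to $\cK:=\cH\oplus\cH$ and introduces $\cA$ acting by $(f,g)\mapsto(Af,Bg)$ on $\dom(\cA):=\dom(A)\oplus\dom(B)$. Then $\cA$ is $m$-accretive, and, $\cA$ being block-diagonal, one has $\cA^\alpha(f,g)=(A^\alpha f,B^\alpha g)$ on $\dom(\cA^\alpha)=\dom(A^\alpha)\oplus\dom(B^\alpha)$ for $\alpha\in(0,1]$ (this, together with the analogous reduction for the kernels via \eqref{4.29}, is standard and should be recorded as a lemma). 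The bounded operator $\cT$ on $\cK$ defined by $\cT(f,g):=(0,Tf)$ then satisfies $\cT\dom(\cA)\subseteq\dom(\cA)$ and $\|\cA\cT(f,g)\|_{\cK}=\|BTf\|_{\cH}\le C\|Af\|_{\cH}\le C\|\cA(f,g)\|_{\cK}$, i.e.\ the triple $(\cA,\cA,\cT)$ fulfils \eqref{4.34}; invoking the corollary-level statement for this triple (the case in which the two $m$-accretive operators coincide) and reading off the second component of the resulting inequality, with the first component of the argument set equal to $0$, returns $T\dom(A^\alpha)\subseteq\dom(B^\alpha)$ and $\|B^\alpha Tf\|_{\cH}\le C_\alpha\|A^\alpha f\|_{\cH}$.

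The main obstacle lies precisely in the design of the auxiliary operators in this last step. One is tempted to compare $\cA=\diag(A,B)$ with an operator carrying $BT$ in an off-diagonal block, but the naive choices $(f,g)\mapsto(0,BTf)$ and $(f,g)\mapsto(Af,BTf)$ are \emph{not} accretive, and the skew-symmetrized operator matrix with $(1,2)$- and $(2,1)$-entries $BT$ and $-(BT)^{*}$, while accretive, fails to be subordinated to $\cA$ because $\|(BT)^{*}h\|_{\cH}$ is not controlled by $\|Ah\|_{\cH}$; the device above circumvents this by keeping $\cA$ genuinely block-diagonal and shifting all dependence on $T$ into the bounded operator $\cT$, at the cost of reducing not literally to the $T=I_{\cH}$ corollary but to the (equally elementary) instance of Theorem~\ref{t4.8} in which the two operators agree. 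Verifying $m$-accretivity of $\cA$, the block-diagonal form of $\cA^\alpha$, and the handling of non-injectivity in the construction of the fractional powers are the remaining, but routine, technical points.
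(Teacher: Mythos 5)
Your proof of the corollary itself — take $T=I_{\cH}$ in Theorem~\ref{t4.8} and read off \eqref{3.19A} — is exactly the paper's argument; the paper states the specialization without further comment, and there is nothing more to it.

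The additional discussion of how Theorem~\ref{t4.8} should conversely be recovered is a different matter. Your direct-sum device on $\cK=\cH\oplus\cH$ with $\cA=\diag(A,B)$ and $\cT(f,g)=(0,Tf)$ is correct as far as it goes, but — as you yourself note at the end — it reduces the general case of Theorem~\ref{t4.8} to the instance of Theorem~\ref{t4.8} with equal operators but general bounded $\cT$, \emph{not} to the corollary (the $T=I_{\cH}$ instance with general $A,B$). These are two distinct specializations of Theorem~\ref{t4.8}, and your argument does not further reduce the first to the second; so the chain of equivalences announced before the corollary is not actually closed by your device. The paper's route for the converse is entirely different: it goes Corollary~\ref{c4.9} $\Rightarrow$ Theorem~\ref{t4.10} (via the generalized polar decomposition $Q=|Q^*|^{1-\alpha}U|Q|^{\alpha}$ from \eqref{2.26}) $\Rightarrow$ Theorem~\ref{t4.8} (via the projection $P$ onto $\ol{\ran(B^*)}$ together with the range/kernel identities \eqref{4.29}), and it is precisely this use of the generalized polar decomposition that the authors wish to highlight. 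Your block-matrix approach avoids the polar decomposition but does not in fact prove the equivalence; the paper's approach is longer but does.
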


The following result was deduced in \cite{Ka61} from Theorem \ref{t4.8}.  (Actually, it is equivalent to  Theorem \ref{t4.8} as we will show below.) For the sake of completeness we present a short proof based on the generalized polar decomposition \eqref{2.26} and on Corollary \ref{c4.9}.

\begin{theorem}  $($\cite[Theorem 2]{Ka61}$)$ \lb{t4.10}
Let $A$ and $B$ be $m$-accretive operators in ${\cH}$ and let $Q$
be a densely defined closed linear operator in ${\cH}$ such that
$\dom(Q)\supseteq\dom(A), \  \dom(Q^*)\supseteq\dom(B)$ and there exist constants $D_1>0$,
$\wti D_1>0$ such that
\begin{equation} \label{4.31A}
\|Q g\|_{\cH} \le D_1 \|A g\|_{\cH}, \;  g\in\dom(A), \quad \|Q^* f\|_{\cH} \le\wti D_1 \|Bf\|_{\cH}, \;
f\in\dom(B).
\end{equation}
Then for each $\alpha\in(0,1)$, there exists a constant $C_\alpha>0$ such that the following inequality holds:
\begin{equation}
|(f,Qg)_{\cH}|\le C_{\alpha} \|B^{1-\alpha}f\|_{\cH} \|A^{\alpha}g\|_{\cH} , \;
f\in\dom(B), \, g\in\dom(A).
\end{equation}
\end{theorem}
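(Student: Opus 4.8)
The plan is to use the generalized polar decomposition \eqref{2.26} of the closed operator $Q$, together with the subordination result of Corollary \ref{c4.9}, to convert the two full-power bounds \eqref{4.31A} into fractional-power bounds that can be paired off by a single application of Cauchy--Schwarz. First I would write the polar decomposition $Q = |Q^*|^{\alpha} U |Q|^{1-\alpha}$ on $\dom(Q) = \dom(|Q|)$, valid for every $\alpha \in [0,1]$ by Theorem \ref{t2.7} (with the partial isometry $U$ from $Q = U|Q|$), so that for $f \in \dom(B)$ and $g \in \dom(A)$,
\begin{equation}
(f, Qg)_{\cH} = \big(|Q^*|^{\alpha} f, U |Q|^{1-\alpha} g\big)_{\cH},
\end{equation}
provided $f \in \dom(|Q^*|^{\alpha})$ and $g \in \dom(|Q|^{1-\alpha})$; taking absolute values and using $\|U\|_{\cB(\cH)} \le 1$ gives
\begin{equation}
|(f,Qg)_{\cH}| \le \big\||Q^*|^{\alpha} f\big\|_{\cH}\, \big\||Q|^{1-\alpha} g\big\|_{\cH}.
\end{equation}

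The next step is to control each factor. The hypothesis $\|Qg\|_{\cH} \le D_1 \|Ag\|_{\cH}$ for $g \in \dom(A)$ says exactly that $|Q|$ is subordinated to $A$ (since $\|Qg\|_{\cH} = \||Q|g\|_{\cH}$), and both $|Q|$ and $A$ are $m$-accretive ($|Q| \ge 0$ is self-adjoint, $A$ is $m$-accretive by assumption). Hence Corollary \ref{c4.9} applies and yields, for every $\alpha \in (0,1)$, that $\dom(A^{1-\alpha}) \subseteq \dom(|Q|^{1-\alpha})$ and $\||Q|^{1-\alpha} g\|_{\cH} \le C'_{\alpha} \|A^{1-\alpha} g\|_{\cH}$ for $g \in \dom(A^{1-\alpha}) \supseteq \dom(A)$. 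Symmetrically, $\|Q^* f\|_{\cH} \le \wti D_1 \|Bf\|_{\cH}$ says $|Q^*|$ is subordinated to $B$, both being $m$-accretive, so Corollary \ref{c4.9} gives $\dom(B^{\alpha}) \subseteq \dom(|Q^*|^{\alpha})$ and $\||Q^*|^{\alpha} f\|_{\cH} \le C''_{\alpha}\|B^{\alpha} f\|_{\cH}$ for $f \in \dom(B^{\alpha}) \supseteq \dom(B)$. (Here $B^{\alpha}$ and $A^{1-\alpha}$ denote the $m$-accretive fractional powers.) Combining the three displays and setting $C_{\alpha} = C'_{\alpha} C''_{\alpha}$ produces
\begin{equation}
|(f,Qg)_{\cH}| \le C_{\alpha}\, \|B^{\alpha} f\|_{\cH}\, \|A^{1-\alpha} g\|_{\cH},
\quad f \in \dom(B),\ g \in \dom(A),
\end{equation}
which, after relabeling $\alpha \leftrightarrow 1-\alpha$, is precisely the claimed inequality (the statement is symmetric under this swap, so the exponents $\|B^{1-\alpha}f\|$, $\|A^{\alpha}g\|$ appear).

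I expect the main subtlety to be purely bookkeeping on domains rather than anything deep: one must check that the vectors $f \in \dom(B)$ and $g \in \dom(A)$ actually lie in $\dom(|Q^*|^{\alpha})$ and $\dom(|Q|^{1-\alpha})$ so that the pairing identity from the polar decomposition is legitimate — but this is guaranteed by the domain inclusions $\dom(B) \subseteq \dom(B^{\alpha}) \subseteq \dom(|Q^*|^{\alpha})$ and $\dom(A) \subseteq \dom(A^{1-\alpha}) \subseteq \dom(|Q|^{1-\alpha})$ coming from Corollary \ref{c4.9} and the standard fact that $\dom(S) \subseteq \dom(S^{\beta})$ for $\beta \in (0,1]$ and $m$-accretive $S$. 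A secondary point worth a sentence is that Theorem \ref{t2.7} is stated for Borel functions $\phi(\lambda)=\lambda^{\alpha}$, $\psi(\lambda)=\lambda^{1-\alpha}$ on $[0,\infty) = \sigma(|Q|)$, which satisfy $\phi\psi = \mathrm{id}$ and $\dom(|Q|) \subseteq \dom(|Q|^{1-\alpha})$, so \eqref{2.26} indeed applies to $Q$. No genuinely hard estimate enters; the substance is entirely outsourced to the polar decomposition and to Kato's interpolation of subordination (Corollary \ref{c4.9}).
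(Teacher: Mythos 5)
Your argument is correct and is essentially the paper's own proof: both combine the generalized polar decomposition \eqref{2.26} with Corollary \ref{c4.9} (subordination persists under fractional powers) and then apply Cauchy--Schwarz, the only difference being the trivial relabeling $\alpha\leftrightarrow 1-\alpha$ in which form of \eqref{2.26} one starts from. The domain bookkeeping you flag is indeed all that needs checking, and it goes through as you describe.
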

\begin{proof}
By Corollary \ref{c4.9} and the fact that $\|Qg\|_{\cH}=\||Q|g\|_{\cH}$, $\|Q^* f\|_{\cH}=\||Q^*|f\|_{\cH}$, the inequalities \eqref{4.31A}  yield for $\beta, \gamma \in (0,1]$,
\begin{align}
\begin{split}
\big\||Q|^{\beta}g\big\|_{\cH} & \le D_{\beta} \|A^{\beta}g\|_{\cH}, \; g\in\dom(A), \\
\||Q^*|^{\gamma}f\|_{\cH} & \le \wti D_{\gamma}\|B^{\gamma}f\|_{\cH}, \; f\in\dom(B)
\end{split}
\end{align}
for some constants $D_{\beta}>0$, $\wti D_{\gamma}>0$.
On the other hand, by \eqref{2.26}, $Q=|Q^*|^{1-\alpha}U|Q|^{\alpha}$, $\alpha\in [0,1]$.
Combining these facts one arrives at
\begin{align}
\begin{split}
& |(f, Qg)_{\cH}| = |(U^*|Q^*|^{1-\alpha}f, |Q|^{\alpha}g)_{\cH}|
\le\||Q^*|^{1-\alpha}f\|_{\cH} \||Q|^{\alpha} g\|_{\cH} \\
& \quad \le \wti D_{1-\alpha}\|B^{1-\alpha}f\|_{\cH} \, \wti D_{\alpha}\|A^{\alpha}g\|_{\cH},
\;  f\in\dom(B), \, g\in\dom(A),
\end{split}
\end{align}
completing the proof.
\end{proof}

Next we show that Theorem \ref{t4.10}, in fact, implies Theorem \ref{t4.8}. This was stated (without proof) in Kato \cite{Ka61}):

\begin{proof}[Deduction  of Theorem \ref{t4.8} from Theorem \ref{t4.10}] Let $Q=B T$.
Then $Q^*\supseteq T^* B^*$, and
\begin{equation}
\dom(Q^*) \supseteq \dom(B^*) \text{ and } \, \|Q^* f\|\le\|T^*\| \|B^* f\|_{\cH}, \;  f\in\dom(B^*).
\end{equation}
In addition, $T\dom(A)\subseteq\dom(B)$  yields
$\dom(Q) \supseteq \dom(A)$. Therefore, by Theorem  \ref{t4.10} (with $B$ replaced by $B^*$),
for any $\alpha \in (0,1)$,
\begin{equation}
|(f,Qg)_{\cH}|\le C_{\alpha}  \big\|(B^*)^{(1-\alpha)}f\big\|_{\cH} \|A^{\alpha}g\|_{\cH},
\quad f\in\dom(B^*), \,  g\in\dom(A).
\end{equation}
(The case $\alpha=1$ is obvious and needs not be considered.) Hence,
\begin{align}
\begin{split}
|(f,Qg)_{\cH}| = |(f, B T g)_{\cH}|=|((B^*)^{1-\alpha}f, B^{\alpha} T g)_{\cH}|  \le
C_{\alpha} \|(B^*)^{1-\alpha}f\|_{\cH} \|A^{\alpha}g\|_{\cH}, &\\
f\in\dom(B^*), \,  g\in\dom(A).&        \lb{4.50}
\end{split}
\end{align}
Clearly,
\begin{align}
|((B^*)^{1-\alpha}f, B^{\alpha} T g)_{\cH}|
= |(P(B^*)^{1-\alpha}f, B^{\alpha} T g)_{\cH}| =
|(P (B^*)^{1-\alpha}f, PB^{\alpha} T g)_{\cH}|, &\\
f\in\dom(B^*), \,  g\in\dom(A),&
\end{align}
where $P$ is the orthogonal projection onto the closure of
$\ran(B^*)$. Therefore, fixing $g\in\dom(A)$, inequality \eqref{4.50} yields
\begin{equation}
\|PB^{\alpha} T g\|_{\cH} \le C_{\alpha} \|A^{\alpha}g\|_{\cH}, \quad \alpha \in (0,1).
\end{equation}
On the other hand, by \eqref{4.29}, $\ker (B)=\ker(B^{\beta})=\ker( B^*)$, $\beta \in (0,1]$,
since $B$ is $m$-accretive. Therefore,
\begin{equation}
{\ol{\ran (B)}}=\ol{\ran (B^{\beta})}=\ol{\ran (B^*)}, \quad \beta \in (0,1].
\end{equation}
Thus, $P B^{\alpha}T g = B^{\alpha}T g$, $g\in\dom(A)$, and hence finally,
\begin{equation}
\|B^{\alpha} T g\|_{\cH} = \|PB^{\alpha} T g\|_{\cH} \le C_{\alpha} \|A^{\alpha}g\|_{\cH},
\quad g\in\dom(A).
\end{equation}
\end{proof}

Thus we have shown
\begin{equation}
\text{Theorem \ref{4.8}} \; \Longrightarrow \text{ Corollary \ref{4.9}} \; \Longrightarrow
\text{ Theorem \ref{t4.10}} \; \Longrightarrow \text{ Theorem \ref{4.8}}
\end{equation}
and hence the equivalence of Theorem \ref{4.8}, Corollary \ref{4.9}, and 
Theorem \ref{t4.10} (illustrating the usefulness of the generalized polar decomposition \eqref{2.26} in this context).

\medskip

We conclude with the following two results:

\begin{theorem} \lb{t4.11}
Let $A$ be $m$-sectorial in $\cH$ with a  vertex $0$ and assume that $B$ is densely defined
and closed in $\cH$. \\
$(i)$ Suppose that $B$ and $B^*$ are $A_{\gR}$-bounded. Then,
\begin{align}
\begin{split}
& \overline{|B|^{1/2} \big(A^{\#}+I_{\cH}\big)^{-1} |B|^{1/2}}, \,
\overline{|B|^{1/2} \big(A^{\#} + I_{\cH}\big)^{-1} |B^*|^{1/2}} \in \cB(\cH),   \label{4.30} \\
& \overline{|B^*|^{1/2} \big(A^{\#}+I_{\cH}\big)^{-1} |B|^{1/2}}, \,
\overline{|B^*|^{1/2} \big(A^{\#} + I_{\cH}\big)^{-1} |B^*|^{1/2}} \in \cB(\cH).
\end{split}
\end{align}
$(ii)$ Suppose that $B$ and $B^*$ are $A_{\gR}$-bounded and that
$\dom \big(A^{1/2}\big) = \dom \big((A^*)^{1/2}\big)$. Then,
\begin{equation}
|B|^{1/2} \big(A^{\#} + I_{\cH}\big)^{-1/2}, \, |B^*|^{1/2} \big(A^{\#} + I_{\cH}\big)^{-1/2} \in \cB(\cH),
\lb{4.31}
\end{equation}
and
\begin{align}
\begin{split}
& \ol{(A + I_{\cH})^{-1/2} B^{\#} (A + I_{\cH})^{-1/2}}, \,
\ol{(A + I_{\cH})^{-1/2} B^{\#} (A^* + I_{\cH})^{-1/2}}  \in \cB(\cH),   \lb{4.32} \\
& \ol{(A^* + I_{\cH})^{-1/2} B^{\#} (A + I_{\cH})^{-1/2}}, \,
\ol{(A^* + I_{\cH})^{-1/2} B^{\#} (A^* + I_{\cH})^{-1/2}} \in \cB(\cH).
\end{split}
 \end{align}
In particular, $B$ and $B^*$ are  $A^{\#}$-form bounded. Moreover, $B$ and $B^*$ are
$A_{\gR}$-form bounded,
\begin{equation}
|B^{\#}|^{1/2} \big(A_{\gR} + I_{\cH}\big)^{-1/2} \in \cB(\cH),
\lb{4.32a}
\end{equation}
and
\begin{equation}
\overline{(A_{\gR} + I_{\cH})^{-1/2}B^{\#}
(A_{\gR}+I_{\cH})^{-1/2}} \in \cB(\cH).    \lb{4.33}
\end{equation}
$(iii)$  Suppose that $B$ is $A$-bounded and that
$\dom \big(A^{1/2}\big) = \dom \big((A^*)^{1/2}\big)$.
Then $B$ is $A_{\gR}$-form bounded. Moreover, if  $B^*$ is also $A$-bounded, then
equation \eqref{4.33} and the relations \eqref{4.32} hold as well.
\end{theorem}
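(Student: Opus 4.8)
The plan is to transfer everything to the self-adjoint nonnegative operator $A_\gR$ (to which Theorem \ref{t3.2} applies) using the representation $A^{\#}=A_\gR^{1/2}(I_\cH\pm iX)A_\gR^{1/2}$ of Theorem \ref{t4.1} and the generalized polar decompositions $B=|B^*|^{1/2}U|B|^{1/2}$, $B^*=|B|^{1/2}U^*|B^*|^{1/2}$ from \eqref{2.26}. Two facts will be used throughout: first, for any $m$-sectorial $C$ one has $\dom(C)\subseteq\dom(\gt_C)=\dom(C_\gR^{1/2})$ by the first and second representation theorems, and since $\gt_{A^*}=\gt_A^*$ has domain $\dom(\gt_A)$ this gives $\dom(A)\cup\dom(A^*)\subseteq\dom(A_\gR^{1/2})$, i.e.\ $\dom(A^{\#})\subseteq\dom(A_\gR^{1/2})$; second, a closed operator composed on the right with a bounded operator whose range lies in its domain is bounded (closed graph theorem; cf.\ \cite[Remark IV.1.5]{Ka80}).

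\emph{Part $(i)$.} Since $A_\gR\ge 0$ is self-adjoint and $B,B^*$ are $A_\gR$-bounded (so $\dom(B)\cap\dom(B^*)\supseteq\dom(A_\gR)$), Theorem \ref{t3.2} applied with $A$ replaced by $A_\gR$ gives $|B^{\#}|^{1/2}(A_\gR+I_\cH)^{-1/2}\in\cB(\cH)$ for $B^{\#}\in\{B,B^*\}$. The crucial new ingredient is the factorization
\begin{equation*}
(A^{\#}+I_\cH)^{-1}=(A_\gR+I_\cH)^{-1/2}\,W_{\#}\,(A_\gR+I_\cH)^{-1/2},\qquad W_{\#}\in\cB(\cH),
\end{equation*}
which I will derive from Theorem \ref{t4.1}: with $G:=A_\gR^{1/2}(A_\gR+I_\cH)^{-1/2}\in\cB(\cH)$, a short computation using $G^2+(A_\gR+I_\cH)^{-1}=I_\cH$ yields the form identity $\gt_{A^{\#}+I_\cH}[u,v]=\big((A_\gR+I_\cH)^{1/2}u,(I_\cH\pm iGXG)(A_\gR+I_\cH)^{1/2}v\big)_\cH$ for $u,v\in\dom(A_\gR^{1/2})$, so that $A^{\#}+I_\cH=(A_\gR+I_\cH)^{1/2}(I_\cH\pm iGXG)(A_\gR+I_\cH)^{1/2}$ in the form sense; since $(GXG)^*=GXG$ one has $\Re(I_\cH\pm iGXG)=I_\cH$, whence $W_{\#}:=(I_\cH\pm iGXG)^{-1}\in\cB(\cH)$, and $A_\gR+I_\cH$ being boundedly invertible gives the displayed factorization. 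On the dense domain $\dom(D_2)$ one then obtains, for any $D_1,D_2\in\{|B|^{1/2},|B^*|^{1/2}\}$,
\begin{equation*}
D_1(A^{\#}+I_\cH)^{-1}D_2=\big[D_1(A_\gR+I_\cH)^{-1/2}\big]\,W_{\#}\,\big[D_2(A_\gR+I_\cH)^{-1/2}\big]^*,
\end{equation*}
a product of three bounded operators, and passing to closures proves \eqref{4.30}.

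\emph{Part $(ii)$.} Now the hypothesis $\dom(A^{1/2})=\dom((A^*)^{1/2})$ is in force, so by Lemma \ref{l4.2}/Theorem \ref{t4.7} the operators $T_{\#}=(A^{\#}+I_\cH)^{1/2}(A_\gR+I_\cH)^{-1/2}$ and $T_{\#}^{-1}=(A_\gR+I_\cH)^{1/2}(A^{\#}+I_\cH)^{-1/2}$ lie in $\cB(\cH)$ (eq.\ \eqref{4.21}), in particular $\dom((A^{\#})^{1/2})=\dom(A_\gR^{1/2})$. Theorem \ref{t3.2}\,$(ii)$ with $A$ replaced by $A_\gR$ gives \eqref{4.32a} and $\overline{(A_\gR+I_\cH)^{-1/2}B^{\#}(A_\gR+I_\cH)^{-1/2}}\in\cB(\cH)$, i.e.\ \eqref{4.33}; composing $|B^{\#}|^{1/2}(A_\gR+I_\cH)^{-1/2}$ with $T_{\#}^{-1}$ on the right yields $|B^{\#}|^{1/2}(A^{\#}+I_\cH)^{-1/2}\in\cB(\cH)$, which is \eqref{4.31}, so $B$ and $B^*$ are $A^{\#}$-form bounded. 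For \eqref{4.32} I insert $B=|B^*|^{1/2}U|B|^{1/2}$ (and $B^*=|B|^{1/2}U^*|B^*|^{1/2}$ for the $B^*$-cases) and use $\big((A^{\#}+I_\cH)^{-1/2}\big)^*=\big((A^{\#})^*+I_\cH\big)^{-1/2}$ together with \eqref{4.31}, e.g.
\begin{equation*}
\overline{(A+I_\cH)^{-1/2}B(A^*+I_\cH)^{-1/2}}=\big[|B^*|^{1/2}(A^*+I_\cH)^{-1/2}\big]^*U\big[|B|^{1/2}(A^*+I_\cH)^{-1/2}\big]\in\cB(\cH),
\end{equation*}
the remaining cases being identical; alternatively \eqref{4.32} follows from \eqref{4.33} and the equivalences in Theorem \ref{t4.7}\,$(iii)$.

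\emph{Part $(iii)$.} Since $A$ is $m$-accretive, $\|f\|_\cH\le\|(A+I_\cH)f\|_\cH$ and $\|Af\|_\cH\le 2\|(A+I_\cH)f\|_\cH$ for $f\in\dom(A)$, so $A$-boundedness of $B$ forces $\||B|f\|_\cH=\|Bf\|_\cH\le C\|(A+I_\cH)f\|_\cH$; thus $|B|$ (self-adjoint, $\ge0$, hence $m$-accretive) is subordinated to the $m$-accretive operator $A+I_\cH$, and Corollary \ref{c4.9} with $\alpha=1/2$ gives $\dom((A+I_\cH)^{1/2})\subseteq\dom(|B|^{1/2})$. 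Under the standing hypothesis, \eqref{4.21} shows $T_{\rm id}$ is boundedly invertible, whence $\dom((A+I_\cH)^{1/2})=\dom((A_\gR+I_\cH)^{1/2})=\dom(A_\gR^{1/2})$; hence $\dom(|B|^{1/2})\supseteq\dom(A_\gR^{1/2})$ and, both operators being closed, $|B|^{1/2}(A_\gR+I_\cH)^{-1/2}\in\cB(\cH)$, i.e.\ $B$ is $A_\gR$-form bounded. If $B^*$ is also $A$-bounded, the same argument gives $|B^*|^{1/2}(A_\gR+I_\cH)^{-1/2}\in\cB(\cH)$, whence $B=|B^*|^{1/2}U|B|^{1/2}$ and $B^*=|B|^{1/2}U^*|B^*|^{1/2}$ yield $\overline{(A_\gR+I_\cH)^{-1/2}B^{\#}(A_\gR+I_\cH)^{-1/2}}\in\cB(\cH)$, i.e.\ \eqref{4.33}, and Theorem \ref{t4.7}\,$(iii)$ delivers \eqref{4.32}. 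The main obstacle is part $(i)$: without the square-root hypothesis one cannot split $(A^{\#}+I_\cH)^{-1}$ as $(A^{\#}+I_\cH)^{-1/2}(A^{\#}+I_\cH)^{-1/2}$ and match each half to a $|B^{\#}|^{1/2}$, since $\dom((A^{\#})^{1/2})$ may fail to lie in $\dom(A_\gR^{1/2})$; the remedy — factoring $(A^{\#}+I_\cH)^{-1}$ through $(A_\gR+I_\cH)^{-1/2}$ on \emph{both} sides with a bounded middle factor — must be carried out at the level of forms via Theorem \ref{t4.1}, carefully negotiating the mismatch between $\dom(A^{\#})$, $\dom((A^{\#})^{1/2})$, and $\dom(A_\gR)$, while in part $(iii)$ the secondary subtlety is that upgrading operator-boundedness of $B$ to form-boundedness genuinely requires the Heinz–Kato-type interpolation of Corollary \ref{c4.9} for the non-self-adjoint operator $A+I_\cH$.
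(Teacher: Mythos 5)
Your proof is correct and follows the paper's proof in all essentials: part~$(i)$ rests on Theorem~\ref{t3.2} applied to $A_\gR$ together with the representation of $A$ from Theorem~\ref{t4.1}, part~$(ii)$ uses $T_{\#}=(A^{\#}+I_\cH)^{1/2}(A_\gR+I_\cH)^{-1/2}\in\cB(\cH)$ from \eqref{4.21} and the generalized polar decomposition \eqref{3.1}, and part~$(iii)$ uses Corollary~\ref{c4.9} and then the same polar-decomposition step. One point worth flagging: in part~$(i)$ the paper's displays \eqref{4.35}--\eqref{4.36} write the bounded middle factor as $(I_\cH\pm iX)^{-1}$ with $X$ from Theorem~\ref{t4.1}, but $A+I_\cH\neq(A_\gR+I_\cH)^{1/2}(I_\cH+iX)(A_\gR+I_\cH)^{1/2}$ in general; your explicit form-level identity $A^{\#}+I_\cH=(A_\gR+I_\cH)^{1/2}(I_\cH\pm iGXG)(A_\gR+I_\cH)^{1/2}$ with $G=A_\gR^{1/2}(A_\gR+I_\cH)^{-1/2}$, together with $\Re(I_\cH\pm iGXG)=I_\cH$, is the precise version of what the paper intends, so your treatment of this step is actually slightly more careful than the source.
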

\begin{proof}
$(i)$  Since $B$ and $B^*$ are $A_{\gR}$-bounded, Theorem \ref{t3.2} implies that
\begin{equation}
|B^{\#}|^{1/2}(A_{\gR} + I_{\cH})^{-1/2}, \,
\overline{(A_{\gR}+I_{\cH})^{-1/2}|B^{\#}|^{1/2}} \in \cB(\cH).   \lb{4.34a}
\end{equation}
Combining these inclusions with \eqref{4.8} one obtains
\begin{align}
& \overline{|B|^{1/2}(A+I_{\cH})^{-1}|B|^{1/2}}   \no \\
& \quad = |B|^{1/2}(A_{\gR}+I_{\cH})^{-1/2}(I_{\cH}+iX)^{-1}
 \overline{(A_{\gR}+I_{\cH})^{-1/2}|B|^{1/2}} \in \cB(\cH),   \lb{4.35} \\
 & \overline{|B|^{1/2}(A^* + I_{\cH})^{-1}|B|^{1/2}}   \no \\
 & \quad = |B|^{1/2}(A_{\gR}+I_{\cH})^{-1/2}(I_{\cH} - iX)^{-1}
 \overline{(A_{\gR}+I_{\cH})^{-1/2}|B|^{1/2}} \in \cB(\cH),    \lb{4.36}
\end{align}
proving the first claim in assertion $(i)$. The remaining three
are proven in precisely the same manner.

$(ii)$ Since by hypothesis
\begin{equation}
\dom\big(A^{1/2}\big) = \dom\big((A^*)^{1/2}\big) = \dom\big(A_{\gR}^{1/2}\big),
\end{equation}
and $A^{1/2}$, $(A^*)^{1/2}$, and $(A_{\gR})^{1/2}$ are closed, one infers that
\begin{equation}
(A_{\gR} + I_{\cH})^{1/2} (A^{\#} + I_{\cH})^{-1/2}, \,
\ol{(A^{\#} + I_{\cH})^{-1/2} (A_{\gR} + I_{\cH})^{1/2}} \in \cB(\cH).   \lb{4.38}
\end{equation}
Hence,
\begin{align}
& |B|^{1/2} \big(A^{\#} + I_{\cH}\big)^{-1/2} = \big[|B|^{1/2} (A_{\gR} + I_{\cH})^{-1/2}\big]
\big[(A_{\gR} + I_{\cH})^{1/2} \big(A^{\#} + I_{\cH}\big)^{-1/2}\big] \in \cB(\cH),   \lb{4.39} \\
& |B^*|^{1/2} \big(A^{\#} + I_{\cH}\big)^{-1/2} = \big[|B^*|^{1/2} (A_{\gR} + I_{\cH})^{-1/2}\big]
\big[(A_{\gR} + I_{\cH})^{1/2} \big(A^{\#} + I_{\cH}\big)^{-1/2}\big] \in \cB(\cH),   \lb{4.40}
\end{align}
applying Theorem \ref{t4.7}\,$(i)$ (also with $B$ replaced by $B^*$).

Using the generalized polar decomposition \eqref{3.1}, $B=|B^*|^{1/2} U |B|^{1/2}$,
one obtains from \eqref{4.34a} and \eqref{4.38} that
\begin{align}
& \ol{(A + I_{\cH})^{-1/2} B (A + I_{\cH})^{-1/2}} \no \\
& \quad = \ol{\big[(A + I_{\cH})^{-1/2} (A_{\gR} + I_{\cH})^{1/2}\big]} \, \ol{(A_{\gR} + I_{\cH})^{-1/2}
B (A_{\gR} + I_{\cH})^{-1/2}} \, \ol{\big[(A_{\gR} + I_{\cH})^{1/2} (A + I_{\cH})^{-1/2}\big]}   \no \\
& \quad = \ol{\big[(A + I_{\cH})^{-1/2} (A_{\gR} + I_{\cH})^{1/2}\big]}
 \ol{\big[(A_{\gR} + I_{\cH})^{-1/2} |B^*|^{1/2}\big]} U \no \\
 & \quad \times \big[|B|^{1/2} (A_{\gR} + I_{\cH})^{-1/2}\big]
 \, \ol{\big[(A_{\gR} + I_{\cH})^{1/2} (A + I_{\cH})^{-1/2}\big]} \in\cB(\cH).   \lb{4.41}
\end{align}
Precisely the same argument works for the remaining three
operators in \eqref{4.32} (using also $B^*=|B|^{1/2} U^*
|B^*|^{1/2}$). Finally, since $A_{\gR} \ge 0$ is self-adjoint, \eqref{4.32a} and \eqref{4.33} follow
from Theorem \ref{t3.2}.

$(iii)$ By Corollary \ref{c4.9},  $|B|^{\alpha}$ is subordinated to
$(A+I_{\cH})^\alpha$, $\alpha\in (0,1]$. In particular, the operator $|B|^{1/2}$ is $(A +
I_{\cH})^{1/2}$-bounded, that is, $|B|^{1/2}(A+I_{\cH})^{-1/2}\in
\cB(\cH)$.\ On the other hand, by \eqref{4.21},  $T =
(A+I_{\cH})^{1/2}(A_{\gR}+I_{\cH})^{-1/2}\in{\cB}(\cH)$. Thus,
\begin{equation} \lb{4.42}
|B|^{1/2}(A_{\gR} + I_{\cH})^{-1/2} = |B|^{1/2}(A + I_{\cH})^{-1/2} (A + I_{\cH})^{1/2}(A_{\gR} + I_{\cH})^{-1/2} = |B|^{1/2}(A + I_{\cH})^{-1/2} T \in \cB(\cH),
\end{equation}
and hence $B$ is $A_{\gR}$-form bounded. If, in addition,  $B^*$ is $A$-bounded, then again by Corollary \ref{c4.9},
$|B^*|^{1/2}(A + I_{\cH})^{-1/2}\in \cB(\cH)$ and hence also  $B^*$ is $A_{\gR}$-form bounded,
\begin{equation}
|B^*|^{1/2}(A_{\gR} + I_{\cH})^{-1/2} = |B^*|^{1/2}(A + I_{\cH})^{-1/2} T \in \cB(\cH).    \lb{4.43}
\end{equation}
Combining \eqref{4.42} and  \eqref{4.43} and using the
generalized polar decomposition \eqref{3.1}, one arrives at
\begin{equation}
\overline{(A_{\gR} + I_{\cH})^{-1/2}B^{\#} (A_{\gR}+I_{\cH})^{-1/2}} = \overline{(A_{\gR} + I_{\cH})^{-1/2}
    |(B^{\#})^*|^{1/2}} U |B^{\#}|^{1/2}(A_{\gR}+I_{\cH})^{-1/2} \in \cB(\cH).    \lb{4.44}
\end{equation}
Relations \eqref{4.32} then follow as in the proof of item $(ii)$.
\end{proof}

Finally, we state an analog of Theorem \ref{t4.11} in connection with relative (form) compactness:

\begin{theorem} \lb{t4.12}
Let $A$ be $m$-sectorial in $\cH$ with a  vertex $0$, assume that $B$ is densely
defined and closed in $\cH$. \\
$(i)$Suppose that $\dom(B)\cap\dom(B^*)\supseteq\dom(A_{\gR})$ and that $B$
$($resp., $B^*$$)$ is $A_{\gR}$-compact. Then,
\begin{align}
& \overline{|B|^{1/2} \big(A^{\#}+I_{\cH}\big)^{-1} |B|^{1/2}}, \,
\overline{|B|^{1/2} \big(A^{\#} + I_{\cH}\big)^{-1} |B^*|^{1/2}}, \,
\overline{|B^*|^{1/2} \big(A^{\#}+I_{\cH}\big)^{-1} |B|^{1/2}}\in \cB_\infty(\cH),  \no \\
& \big(\text{resp., } \overline{|B|^{1/2} \big(A^{\#} + I_{\cH}\big)^{-1} |B^*|^{1/2}}, \,
\overline{|B^*|^{1/2} \big(A^{\#}+I_{\cH}\big)^{-1} |B|^{1/2}},  \label{4.60}  \\
& \hspace*{1.16cm}
\overline{|B^*|^{1/2} \big(A^{\#} + I_{\cH}\big)^{-1} |B^*|^{1/2}} \in \cB_\infty(\cH).\big)   \no
\end{align}
$(ii)$ Suppose that $\dom(B)\cap\dom(B^*)\supseteq\dom(A_{\gR})$ and that  $B$
$($resp., $B^*$$)$ is $A_{\gR}$-compact. In addition, assume that
$\dom \big(A^{1/2}\big) = \dom \big((A^*)^{1/2}\big)$. Then,
\begin{equation}
|B|^{1/2}(A^{\#} + I_{\cH})^{-1/2}\in \cB_\infty(\cH) \;
\big(\text{resp., } |B^*|^{1/2}\big(A^{\#} + I_{\cH}\big)^{-1/2} \in \cB_\infty(\cH)\big),  \label{4.61}
\end{equation}
and
\begin{align}
\begin{split}
& \ol{(A + I_{\cH})^{-1/2} B^{\#} (A + I_{\cH})^{-1/2}}, \,
\ol{(A + I_{\cH})^{-1/2} B^{\#} (A^* + I_{\cH})^{-1/2}} \in \cB_\infty(\cH),   \lb{4.62} \\
& \ol{(A^* + I_{\cH})^{-1/2} B^{\#} (A + I_{\cH})^{-1/2}}, \,
\ol{(A^* + I_{\cH})^{-1/2} B^{\#} (A^* + I_{\cH})^{-1/2}} \in \cB_\infty(\cH).
\end{split}
\end{align}
In particular, $B$ $($resp., $B^*$$)$ is $A^{\#}$-form compact. Moreover, $B$ $($resp., $B^*$$)$ is
$A_{\gR}$-form compact,
\begin{equation}
|B|^{1/2} \big(A_{\gR} + I_{\cH}\big)^{-1/2} \in \cB_{\infty}(\cH) \;  \big(\text {resp., }
|B^*|^{1/2}(A_{\gR}+I_{\cH})^{-1/2} \in\cB_\infty (\cH)\big)     \lb{4.63}
\end{equation}
and
\begin{equation}
\overline{(A_{\gR} + I_{\cH})^{-1/2}B^{\#}
(A_{\gR}+I_{\cH})^{-1/2}} \in \cB_{\infty}(\cH).    \lb{4.64}
\end{equation}
$(iii)$  Suppose that $\dom(B)\cap\dom(B^*)\supseteq\dom(A)$ and that $B$
$($resp., $B^*$$)$ is $A^{1-\varepsilon}$-compact for some $\varepsilon \in (0,1)$. In addition, assume that $\dom \big(A^{1/2}\big) = \dom \big((A^*)^{1/2}\big)$.
Then $B$ $($resp., $B^*$$)$ is $A_{\gR}$-form compact. Moreover, equation
\eqref{4.64} and relations \eqref{4.62} hold as well.
\end{theorem}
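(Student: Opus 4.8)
The plan is to mirror the proof of Theorem~\ref{t4.11} almost verbatim, systematically replacing ``$\in\cB(\cH)$'' by ``$\in\cB_\infty(\cH)$'' and invoking, at each step, the two-sided ideal property of $\cB_\infty(\cH)$ (a product of bounded operators is compact as soon as one factor is, and $T\in\cB_\infty(\cH)$ iff $T^*\in\cB_\infty(\cH)$). The main ingredients are Theorem~\ref{t3.2} and Theorem~\ref{t3.5}, applied to the self-adjoint operator $A_{\gR}\ge0$; the factorizations $A+I_{\cH}=(A_{\gR}+I_{\cH})^{1/2}(I_{\cH}+iX)(A_{\gR}+I_{\cH})^{1/2}$ and $A^*+I_{\cH}=(A_{\gR}+I_{\cH})^{1/2}(I_{\cH}-iX)(A_{\gR}+I_{\cH})^{1/2}$ coming from \eqref{4.8}; the boundedness, under $\dom\big(A^{1/2}\big)=\dom\big((A^*)^{1/2}\big)$, of the ``conversion operators'' $(A_{\gR}+I_{\cH})^{1/2}(A+I_{\cH})^{-1/2}$, $(A+I_{\cH})^{1/2}(A_{\gR}+I_{\cH})^{-1/2}$ and their $A^*$-analogues (Lemma~\ref{l4.2}, \eqref{4.11}, \eqref{4.21}); the generalized polar decompositions $B=|B^*|^{1/2}U|B|^{1/2}$, $B^*=|B|^{1/2}U^*|B^*|^{1/2}$ of \eqref{3.1}; and Corollary~\ref{c4.9}.

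For $(i)$: since $B$, $B^*$ are closed and $\dom(B)\cap\dom(B^*)\supseteq\dom(A_{\gR})$, Theorem~\ref{t3.2} gives that $B$ and $B^*$ are $A_{\gR}$-bounded, so $|B|^{1/2}(A_{\gR}+I_{\cH})^{-1/2}$, $|B^*|^{1/2}(A_{\gR}+I_{\cH})^{-1/2}$ and their adjoints $\overline{(A_{\gR}+I_{\cH})^{-1/2}|B|^{1/2}}$, $\overline{(A_{\gR}+I_{\cH})^{-1/2}|B^*|^{1/2}}$ all lie in $\cB(\cH)$; if moreover $B$ (resp.\ $B^*$) is $A_{\gR}$-compact, Theorem~\ref{t3.5}$(i)$ promotes $|B|^{1/2}(A_{\gR}+I_{\cH})^{-1/2}$ (resp.\ $|B^*|^{1/2}(A_{\gR}+I_{\cH})^{-1/2}$), and hence its adjoint, to $\cB_\infty(\cH)$. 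Substituting the factorizations of $(A+I_{\cH})^{-1}$ and $(A^*+I_{\cH})^{-1}$ into \eqref{4.60} exactly as in \eqref{4.35}--\eqref{4.36} writes each of those operators as a product of three $\cB(\cH)$-factors one of whose outer factors is compact, so the product is compact; this is \eqref{4.60}, the ``resp.'' case being identical with $B$ and $B^*$ interchanged. For $(ii)$ one adds $\dom\big(A^{1/2}\big)=\dom\big((A^*)^{1/2}\big)$, so the conversion operators are bounded; then $|B|^{1/2}(A+I_{\cH})^{-1/2}=\big[|B|^{1/2}(A_{\gR}+I_{\cH})^{-1/2}\big]\big[(A_{\gR}+I_{\cH})^{1/2}(A+I_{\cH})^{-1/2}\big]$ is compact when $B$ is $A_{\gR}$-compact (and likewise for $B^*$, and for $A^*$ in place of $A$), giving \eqref{4.61}; inserting $B=|B^*|^{1/2}U|B|^{1/2}$ (resp.\ $B^*=|B|^{1/2}U^*|B^*|^{1/2}$) into \eqref{4.62} and sandwiching by the conversion operators as in \eqref{4.41}, one inner factor is compact and the rest bounded, so \eqref{4.62} follows; finally \eqref{4.63} and \eqref{4.64} are immediate from Theorem~\ref{t3.5}$(i)$, $(ii)$ applied to the self-adjoint operator $A_{\gR}\ge0$.

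For $(iii)$ — which I expect to be the crux — the plan is to reduce to $(ii)$ at the cost of passing to half powers. Since $B$ (resp.\ $B^*$) is $A^{1-\varepsilon}$-compact and $A^{1-\varepsilon}+I_{\cH}$ is comparable to $(A+I_{\cH})^{1-\varepsilon}$ (so that $(A^{1-\varepsilon}+I_{\cH})(A+I_{\cH})^{-(1-\varepsilon)}$ and its inverse are bounded), one obtains $|B|(A+I_{\cH})^{-(1-\varepsilon)}\in\cB_\infty(\cH)$ and hence, composing with the bounded $(A+I_{\cH})^{-\varepsilon}$, also $|B|(A+I_{\cH})^{-1}\in\cB_\infty(\cH)$; moreover $\dom(B)\cap\dom(B^*)\supseteq\dom(A)$ and the closedness of $B$, $B^*$ force $B$, $B^*$ to be $A$-bounded (\cite[Remark IV.1.5]{Ka80}), whence by Corollary~\ref{c4.9} (with $\alpha=\tfrac12$) $|B|^{1/2}(A+I_{\cH})^{-1/2}$ and $|B^*|^{1/2}(A+I_{\cH})^{-1/2}$ lie in $\cB(\cH)$, so by \eqref{4.21} both $B$ and $B^*$ are $A_{\gR}$-form bounded. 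The main step is then to upgrade this to $|B|^{1/2}(A_{\gR}+I_{\cH})^{-1/2}\in\cB_\infty(\cH)$ (resp.\ $|B^*|^{1/2}(A_{\gR}+I_{\cH})^{-1/2}\in\cB_\infty(\cH)$): one reruns the complex-interpolation argument behind Theorem~\ref{t3.5}$(i)$ (the Lemma on p.~115 of \cite{RS78}), but carried out on the side of the self-adjoint operator $A_{\gR}$ (whose imaginary powers are unitary), transporting the compact statement $|B|(A+I_{\cH})^{-(1-\varepsilon)}\in\cB_\infty(\cH)$ to an analytic family attached to $A_{\gR}$ via the bounded conversion operators of \eqref{4.11}; the strict inequality $\varepsilon>0$ supplies exactly the margin needed to absorb the gap between full and half powers. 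Once $B$ (resp.\ $B^*$) is $A_{\gR}$-form compact while both $B$, $B^*$ are $A_{\gR}$-form bounded, feeding the polar decomposition into $\overline{(A_{\gR}+I_{\cH})^{-1/2}B(A_{\gR}+I_{\cH})^{-1/2}}$ (resp.\ the analogue with $B^*$) and using the ideal property gives \eqref{4.64}, and sandwiching by the conversion operators as in $(ii)$ then yields \eqref{4.62}.

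The genuinely new ingredient, and hence the main obstacle, is this interpolation step in $(iii)$: turning relative compactness with respect to the fractional power $A^{1-\varepsilon}$ into relative form compactness with respect to the self-adjoint operator $A_{\gR}$. Since an $m$-sectorial operator need not possess bounded imaginary powers, one cannot interpolate the family $z\mapsto(A+I_{\cH})^{-z}|B|(A+I_{\cH})^{z-(1-\varepsilon)}$ directly; the workaround is to run the Stein interpolation entirely on the $A_{\gR}$-side, transporting the hypothesis and the conclusion across the bounded operators furnished by \eqref{4.11} and \eqref{4.21} and verifying the requisite boundary estimates. Everything else — items $(i)$, $(ii)$, and the final assembly in $(iii)$ — is routine bookkeeping with the $\cB_\infty(\cH)$-ideal property and the conversion operators already constructed in the course of the proof of Theorem~\ref{t4.11}.
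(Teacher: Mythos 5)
Parts $(i)$ and $(ii)$ of your proposal match the paper's proof, and the bookkeeping with the ideal property of $\cB_\infty(\cH)$, Theorem \ref{t3.5}, and the conversion operators $T_{\#}$ from \eqref{4.21} is exactly right. The gap is in part $(iii)$, both in your diagnosis of the obstacle and in your proposed workaround.

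You assert that one \emph{cannot} interpolate the analytic family $z\mapsto \ol{(A^*+I_{\cH})^{-z}|B|(A+I_{\cH})^{-1+z}}$ directly because a general $m$-sectorial operator need not have \emph{uniformly} bounded imaginary powers. But the direct interpolation is precisely what the paper does, and it works: from the integral formula \eqref{4.73} one gets that $(A+I_{\cH})^{-\beta+i\gamma}$ is bounded for $\beta\in(0,1)$, with operator norm controlled by $|\sin(\pi(\beta+i\gamma))/\sin(\pi\beta)|\lesssim e^{\pi|\gamma|}$. That growth is merely exponential in $|\gamma|$ and is killed by inserting the standard damping factor $e^{z^2}$ (note $|e^{z^2}|=e^{(\Re z)^2-(\Im z)^2}$), after which the Lemma on p.~115 of \cite{RS78} applies on the strip $\Re z\in(\varepsilon_0,1-\varepsilon_0)$. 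The margin $\varepsilon>0$ is used at the boundary to write $|B|(A+I_{\cH})^{-1+\varepsilon_0+i\gamma}=\big[|B|(A+I_{\cH})^{-1+\varepsilon}\big](A+I_{\cH})^{-(\varepsilon-\varepsilon_0)+i\gamma}$, placing the compact factor in front and a bounded one behind; Corollary~\ref{c4.9} then provides boundedness of $|B|^{\alpha}(A+I_{\cH})^{-(1-\varepsilon)\alpha}$ for the split of $|B|$ into $|B|^z|B|^{1-z}$. Evaluating at $z=1/2$ gives $(A^*+I_{\cH})^{-1/2}|B|(A+I_{\cH})^{-1/2}\in\cB_\infty(\cH)$, and only \emph{then} does one convert to the $A_{\gR}$-side by the $1/2$-power conversion operators $T=(A+I_{\cH})^{1/2}(A_{\gR}+I_{\cH})^{-1/2}$ and factor as $D^*D$.

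Your alternative of running the interpolation entirely on the $A_{\gR}$-side does not get off the ground: to transport the hypothesis $|B|(A+I_{\cH})^{-(1-\varepsilon)}\in\cB_\infty(\cH)$ into $|B|(A_{\gR}+I_{\cH})^{-(1-\varepsilon_0)}\in\cB_\infty(\cH)$ you would need $(A+I_{\cH})^{1-\varepsilon_0}(A_{\gR}+I_{\cH})^{-(1-\varepsilon_0)}\in\cB(\cH)$, i.e., $\dom\big(A_{\gR}^{1-\varepsilon_0}\big)\subseteq\dom\big(A^{1-\varepsilon_0}\big)$ for a power $1-\varepsilon_0>1/2$. Lemma~\ref{l4.2} and \eqref{4.11} give this only for the power $1/2$, and Kato's result quoted in Remark \ref{r4.5}\,$(ii)$ only covers $\alpha\in(0,1/2)$; for $\alpha>1/2$ the domain equality $\dom(A^{\alpha})=\dom\big((A^*)^{\alpha}\big)=\dom\big(A_{\gR}^{\alpha}\big)$ may fail, so the ``bounded conversion operators'' you invoke are simply not available at the exponent you need. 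In short: the interpolation must be carried out against the fractional powers of $A$ and $A^*$ themselves (with the Gaussian damping), and the passage to $A_{\gR}$ should happen only at the very end, at the single exponent $1/2$.
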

\begin{proof}
$(i)$  Since by hypothesis $B$ and $B^*$ are $A_{\gR}$-bounded and $B$ (resp., $B^*$) is
$A_{\gR}$-compact, Theorem \ref{t3.5} implies that
\begin{align}
\begin{split}
& |B|^{1/2}(A_{\gR} + I_{\cH})^{-1/2} =
\overline{(A_{\gR}+I_{\cH})^{-1/2}|B|^{1/2}} \in \cB_\infty(\cH)    \lb{4.70} \\
& \big(\text{resp., } \, |B^*|^{1/2}(A_{\gR} + I_{\cH})^{-1/2} =
\overline{(A_{\gR}+I_{\cH})^{-1/2}|B^*|^{1/2}} \in \cB_\infty(\cH)\big).
\end{split}
\end{align}
At this point one can follow the proof of Theorem \ref{t4.11}\,$(i)$, noting that each operator in
\eqref{4.35} and \eqref{4.36} contains at least one compact factor from \eqref{4.70}.

$(ii)$ Again, one can follow the proof of Theorem \ref{t4.11}\,$(ii)$, noting that the right-hand side of
\eqref{4.39} (resp., \eqref{4.40}) contains a compact factor from \eqref{4.70}. Similarly, the right-hand side of \eqref{4.41} and the analogous equations with $A$ replaced by $A^*$ (resp., $B$ replaced by $B^*$) contains at least one compact factor from \eqref{4.70}. Relations \eqref{4.63} and
\eqref{4.64} are clear from Theorem \ref{t3.5} since $A_{\gR} \ge 0$ is self-adjoint.

$(iii)$ Since by hypothesis, $\dom(B)\cap\dom(B^*)\supseteq\dom(A)$, $B$ and $B^*$ are $A$-bounded and hence Theorem \ref{t4.11}\,$(iii)$ and the results \eqref{4.42}--\eqref{4.44} in its proof are at our disposal. Next, we first assume that
$B(A+I_{\cH})^{-1+\varepsilon}\in \cB_{\infty}(\cH)$. Then (using  $|B|=U^* B$, cf.\ \eqref{2.22}),
\begin{equation}
|B|(A+I_{\cH})^{-1+\varepsilon_0+i\gamma} =\big[|B|(A+I_{\cH})^{-1+\varepsilon}\big]
(A+I_{\cH})^{-(\varepsilon-\varepsilon_0) + i \gamma} \in \cB_{\infty}(\cH),   
\quad 0 \leq \varepsilon_0 < \varepsilon  \lb{4.71}
\end{equation}
since
\begin{equation}
(A+I_{\cH})^{-\beta + i \gamma} \in\cB(\cH), \quad \beta \in (0,1), \;
\gamma \in \bbR,    \lb{4.72}
\end{equation}
as is clear from the formula (cf.\ \cite[Remark V.3.50]{Ka80}, 
\cite[Sect.\ 14.12]{KZPS76}),
\begin{equation}
(S+I_{\cH})^{-z} = \f{\sin(\pi z)}{\pi} \int_0^\infty dt \, t^{-z} (S+(t+1)I_{\cH})^{-1}, \quad
z\in\bbC, \; \Re(z) \in (0,1),   \lb{4.73}
\end{equation}
for any $m$-accretive operator $S$ in $\cH$.

Since by hypothesis
$B(A+I_{\cH})^{-1+\varepsilon} \in \cB_{\infty}(\cH) \subset \cB(\cH)$, $B$ is
subordinated to $(A+I_{\cH})^{1-\varepsilon}$, and hence by Corollary \ref{c4.9},
$|B|^\alpha$ is subordinated to $(A+I_{\cH})^{(1-\varepsilon)\alpha}$ for all
$\alpha\in (0,1]$,
\begin{equation}
|B|^{\alpha} (A+I_{\cH})^{-(1-\varepsilon)\alpha} \in \cB(\cH), \quad \alpha \in (0,1].
\lb{4.74}
\end{equation}

In the following we assume without loss of generality that
\begin{equation}
\ker(|B|) = \ker (B) = \{0\}. 
\end{equation}

Thus, one obtains 
\begin{align}
&\ol{(A^*+I_{\cH})^{-z} |B| (A+I_{\cH})^{-1+z}} =
\ol{(A^*+I_{\cH})^{-z} |B|^z |B|^{1-z} (A+I_{\cH})^{-1+z}}   \no \\
& \quad = \big[|B|^{{\ol z}}(A+I_{\cH})^{-{\ol z}}\big]^* \big[|B|^{1-z}(A+I_{\cH})^{-1+z}\big]
\in \cB(\cH), \quad \Re(z)\in (0,1),     \lb{4.75}
\end{align}
since by \eqref{4.72} and \eqref{4.74},
\begin{align}
\begin{split}
|B|^{\alpha + i \beta} (A+I_{\cH})^{-\alpha - i \beta}
= |B|^{i\beta} \big[|B|^{\alpha} (A+I_{\cH})^{-(1-\varepsilon)\alpha}\big]
(A+I_{\cH})^{-\varepsilon \alpha - i \beta} \in \cB(\cH),&   \lb{4.76} \\
 \alpha \in (0,1], \; \beta\in\bbR,&
 \end{split}
\end{align}
as $|B|^{i \beta}$ is unitary. Moreover, choosing a compact subinterval of $(0,1)$ containing $1/2$ in its interior, for instance, $[\varepsilon_0,1-\varepsilon_0]$ for some
$\varepsilon_0 \in (0,1/2)$, one obtains for
$z=\varepsilon_0+i\gamma$ in \eqref{4.75},
\begin{align}
& \|\ol{(A^*+I_{\cH})^{-\varepsilon_0-i\gamma}|B|
(A+I_{\cH})^{-(1-\varepsilon_0)+i\gamma}}\|  \no \\
& \quad = \big\|\big[|B|^{\varepsilon_0-i\gamma}
(A+I_{\cH})^{-\varepsilon_0+i\gamma}\big]^*
\big[|B|^{1-\varepsilon_0 -i\gamma}
(A+I_{\cH})^{-(1-\varepsilon_0)+i\gamma}\big]\big\|   \no \\
& \quad \leq \big\||B|^{\varepsilon_0} (A+I_{\cH})^{-\varepsilon_0+i\gamma}\big\|
\big\||B|^{1-\varepsilon_0} (A+I_{\cH})^{-(1-\varepsilon_0)+i\gamma}\big\|    \no \\
& \quad \leq \big\||B|^{\varepsilon_0} (A+I_{\cH})^{-(1-\varepsilon)\varepsilon_0}\big\|
\big\|(A+I_{\cH})^{-\varepsilon \varepsilon_0+i\gamma}\big\|  \no \\
& \quad \quad \times \big\||B|^{1-\varepsilon_0}
(A+I_{\cH})^{-(1-\varepsilon)(1-\varepsilon_0)}\big\|
\big\|(A+I_{\cH})^{-\varepsilon(1-\varepsilon_0)+i\gamma}\big\|   \no \\
& \quad \leq \big\||B|^{\varepsilon_0} (A+I_{\cH})^{-(1-\varepsilon)\varepsilon_0}\big\|
\big\||B|^{1-\varepsilon_0} (A+I_{\cH})^{-(1-\varepsilon)(1-\varepsilon_0)}\big\| 
C e^{2 \pi |\gamma|}
\lb{4.76a}
\end{align}
for some $C=C(\varepsilon,\varepsilon_0)>0$ (cf.\ \eqref{4.76b}). (In fact, using \cite[Theorem 4]{Ka62}, one can replace $2\pi$ by $\pi$ in the exponent of \eqref{4.76a}, but this plays no role in our context.) Here we used the fact that by \eqref{4.1} and \eqref{4.73},
\begin{align}
\big\|(A+I_{\cH})^{-z}\big\| &= \bigg|\f{\sin(\pi z)}{\pi}\bigg| \,
\bigg\|\int_0^\infty dt \, t^{-z} (A+(t+1)I_{\cH})^{-1}\bigg\|   \no \\
& \leq \bigg|\f{\sin(\pi z)}{\pi}\bigg| \int_0^\infty dt \, t^{-\Re(z)}
\big\|(A+(t+1)I_{\cH})^{-1}\big\|   \no \\
& \leq \bigg|\f{\sin(\pi z)}{\pi}\bigg| \int_0^\infty dt \,\f{ t^{-\Re(z)}}{t+1}
 = \bigg|\f{\sin(\pi z)}{\sin(\pi \Re(z))}\bigg|, \quad \Re(z) \in (0,1).   \lb{4.76b}
\end{align}
The same computation applies to $z=1-\varepsilon_0+i\gamma$ in \eqref{4.75}, and more generally, one has
\begin{equation}
\sup_{\gamma\in\bbR} 
\big\|\ol{(A^*+I_{\cH})^{-\alpha-i \gamma} |B| (A+I_{\cH})^{-1+\alpha + i\gamma}}\big\|
e^{-2\pi|\gamma|} < \infty, \quad \alpha \in (0,1).    \lb{4.76c}
\end{equation}

In addition, the map
\begin{equation}
z\mapsto e^{z^2} \ol{(A^*+I_{\cH})^{-z} |B| (A+I_{\cH})^{-1+z}} \, 
\text{ is analytic in the strip
$\Re(z) \in (0,1)$}.   \lb{4.77}
\end{equation}
By the proof of the Lemma in \cite[p.\ 115]{RS78}, \eqref{4.71},
\eqref{4.75}, \eqref{4.76c} (for $\alpha = \varepsilon_0$ and
$\alpha = 1- \varepsilon_0$), and \eqref{4.77} imply,  by complex interpolation, that
\begin{equation}
e^{z^2} \ol{(A^*+I_{\cH})^{-z} |B| (A+I_{\cH})^{-1+z}} \in \cB_{\infty}(\cH), 
\quad z\in\bbC, \; \Re(z)\in (\varepsilon_0,1-\varepsilon_0).   \lb{4.78}
\end{equation}
Since $\varepsilon_0\in(0,1/2)$ can be taken arbitrarily small, one finally concludes that
\begin{equation}
\ol{(A^*+I_{\cH})^{-z} |B| (A+I_{\cH})^{-1+z}} \in \cB_{\infty}(\cH), \quad z\in\bbC, \;
\Re(z)\in (0,1).   \lb{4.78a}
\end{equation}
In particular,
\begin{equation}
(A^*+I_{\cH})^{-1/2} |B| (A+I_{\cH})^{-1/2} \in \cB_{\infty}(\cH).    \lb{4.79}
\end{equation}
Thus,
\begin{align}
& \ol{(A_{\gR}+I_{\cH})^{-1/2} |B| (A_{\gR}+I_{\cH})^{-1/2}}  \no \\
& \quad = \ol{(A_{\gR}+I_{\cH})^{-1/2} (A^*+I_{\cH})^{1/2} (A^*+I_{\cH})^{-1/2}
|B| (A+I_{\cH})^{-1/2} (A+I_{\cH})^{1/2} (A_{\gR}+I_{\cH})^{-1/2}}  \no \\
& \quad = \big[(A+I_{\cH})^{1/2} (A_{\gR}+I_{\cH})^{-1/2}\big]^*
\ol{(A^*+I_{\cH})^{-1/2} |B| (A+I_{\cH})^{-1/2}}
\big[(A+I_{\cH})^{1/2} (A_{\gR}+I_{\cH})^{-1/2}\big]  \no \\
& \quad = T^* \, \ol{(A^*+I_{\cH})^{-1/2} |B| (A+I_{\cH})^{-1/2}} \, T \in \cB_{\infty}(\cH),
\lb{4.80}
\end{align}
where $T=[(A+I_{\cH})^{1/2} (A_{\gR}+I_{\cH})^{-1/2} \in \cB(\cH)$ (cf.\ \eqref{4.21})
and we used again the reasoning $(1)$--$(3)$ as in the proof of \eqref{4.23}. Relation \eqref{4.80} and the fact that an operator $D$ is compact if and only if $D^*D$ is, then finally implies
\begin{equation}
|B|^{1/2} (A_{\gR}+I_{\cH})^{-1/2} \in \cB_{\infty}(\cH).   \lb{4.81}
\end{equation}
In exactly the same manner, the assumption
$B^* (A+I_{\cH})^{-1+\varepsilon} \in \cB_{\infty}(\cH)$ then implies
\begin{equation}
|B^*|^{1/2} (A_{\gR}+I_{\cH})^{-1/2} \in \cB_{\infty}(\cH).   \lb{4.82}
\end{equation}
In particular, since the operator in \eqref{4.42} (resp. in \eqref{4.43}) now lies in
$\cB_{\infty}(\cH)$, $B$ (resp., $B^*$) is $A_{\gR}$-form compact, that is, \eqref{4.63} holds.
Equation \eqref{4.64} then follows as in \eqref{4.44} from \eqref{4.63}. Finally, relations \eqref{4.62}
again follow as in the proof of item $(ii)$.
\end{proof}

\begin{remark} \lb{r4.13}
We do not know if one can generally take $\varepsilon = 0$ in
Theorem \ref{t4.12}\,$(iii)$.
Of course, if the condition
\begin{equation}
\sup_{\gamma\in\bbR}\big\|(A+I_{\cH})^{i \gamma}\big\|<\infty    \lb{4.98}
\end{equation}
holds, the proof of Theorem \ref{t4.12}\,$(iii)$ (c.f., in particular, estimates \eqref{4.76a}) shows that $\varepsilon$ can indeed be taken equal to zero. In particular, \eqref{4.98} holds if $A$ is similar to a self-adjoint operator $S$ in some complex, separable Hilbert space $\cH'$ with
$S\geq -I_{\cH'}$ and $\{-1\}$ not an eigenvalue of $S$ (by applying the spectral theorem to $S$). Conversely, suppose $A$ is $m$-sectorial in $\cH$ with a  vertex $0$ and consider $T=(A+I_{\cH})^{-i}=\bigl((A + I_{\cH})^{-1}\bigr)^i$. Then by \eqref{4.98}, $T^t$, $t\in\bbR$, is a uniformly bounded one-parameter commutative group of transformations, in fact, a $C_0$-group with generator $i \log\big((A+I_{\cH})^{-1}\big)$ (cf.\ the discussion in \cite[Corollary 5.4]{Ok00}), 
\begin{equation}
T^t = (A+I_{\cH})^{-it} = e^{it \log((A+I_{\cH})^{-1})},  \quad 
\big\|T^t\big\| \leq C, \quad  t\in\bbR,       \lb{4.99} 
\end{equation}
for some fixed constant $C>0$. Thus, by  Sz.-Nagy's theorem \cite{Sz47} (see also \cite[Sect.\ I.6]{DK74}, \cite[Lemma XV.6.1]{DS88a}), there exists an operator $V\in\cB(\cH)$ with $V^{-1}\in\cB(\cH)$, such that 
\begin{equation}
V^{-1} T^t V = U(t) = e^{it H}, \quad t\in\bbR,   \lb{4.100}
\end{equation}
where $U(t)$, $t\in\bbR$, is a strongly continuous unitary one-parameter group with a self-adjoint (possibly unbounded) generator $H=H^*$ in $\cH$. Thus,
\begin{equation}
T^t=e^{it \log((A+I_{\cH})^{-1})}=Ve^{it H}V^{-1} = e^{it VHV^{-1}}, \quad t\in\bbR,  
\lb{4.101}
\end{equation}
implying
\begin{equation}
\log\big((A+I_{\cH})^{-1}\big) = V H V^{-1}.    \lb{4.102}
\end{equation}
On the other hand (cf.\ \cite[Proposition 2.1]{Ok00}), $\log((A+I_{\cH})^{-1})$ is also the generator of a $C_0$-semigroup of contractions in $\cH$,
\begin{equation}
(A+I_{\cH})^{-t} = e^{t \log((A+I_{\cH})^{-1})}, \quad t\geq 0,   \lb{4.103}
\end{equation}
and hence,
\begin{equation}
(A+I_{\cH})^{-t} = e^{t \log((A+I_{\cH})^{-1})} = e^{t V H V^{-1}} 
= V e^{t H} V^{-1}, \quad t\geq 0.   \lb{4.104}
\end{equation}
Taking $t=1$ in \eqref{4.104} then shows that $A$ is similar to a self-adjoint operator 
in $\cH$. (Incidentally, we note that necessarily $H \leq  c I_{\cH}$ for some $c\in\bbR$,  since \eqref{4.103} represents a family of contractions.)
\end{remark}

\bigskip

\noindent {\bf Acknowledgments.}
We are indebted to Brian Davies, Nigel Kalton, Heinz Langer, Yuri Latushkin, Vladimir  Ovchinnikov, Leiba Rodman, and Barry Simon for helpful correspondence.

One of us (F.G.) gratefully acknowledges the extraordinary hospitality of the Faculty
of Mathematics of the University of Vienna, Austria, and especially, that of Gerald Teschl,
during his three month visit in the first half of 2008, where parts of this paper were written.


\end{document}